\documentclass[11pt,reqno]{amsart}
\usepackage{amsmath,amssymb,amsfonts}
\usepackage{mathrsfs}
\usepackage{enumerate}
\usepackage[all, knot]{xy}
\usepackage{color}
\usepackage[colorlinks=true, pdfstartview=FitV, linkcolor=blue,%
citecolor=blue, urlcolor=blue]{hyperref}
\usepackage{tikz}
\usepackage{rotating}
\usepackage{graphicx}
\usetikzlibrary{decorations.pathreplacing}
\usepackage{float}
\usepackage{tikz-cd}
\usepackage{ytableau}
\usepackage{mathtools}
\usepackage{appendix}
\newtheorem{Thm}{Theorem}[section]
\newtheorem{Prop}[Thm]{Proposition}
\newtheorem{Cor}[Thm]{Corollary}
\newtheorem{Lem}[Thm]{Lemma}
\theoremstyle{definition}
\newtheorem{defn}[Thm]{Definition}
\newtheorem{Ex}[Thm]{Example}
\newtheorem{Rmk}[Thm]{Remark}
\numberwithin{equation}{section}

\DeclareMathOperator{\evac}{evac}
\DeclareMathOperator{\jdt}{jdt}

\newcommand{\Tinfty}{T_\infty}
\newcommand{\Tinf}{\mathcal T(\infty)}

\newcommand{\f}{\tilde f}

\newcommand{\fp}{\tilde f^{\,\prime}}

\newcommand{\wt}{\mathrm{wt}}

\allowdisplaybreaks
\begin{document}

\title[Young tableau descriptions for the polyhedral realizations]
{Young tableau descriptions for the polyhedral realizations of crystal bases in type $A_n$}

\author[Shaolong Han]{Shaolong Han}
\address{Beijing International Center for Mathematical Research, Peking University, No. 5 Yiheyuan Road, Beijing, 100871, China}
\email{hanshaolong@bicmr.pku.edu.cn}

\subjclass[2020]{17B10, 17B37, 05E10}
\keywords{Crystal basis, Polyhedral realization, Young tableau, Gelfand–Tsetlin pattern, Lusztig data}

\begin{abstract}
By utilizing the combinatorial properties of various tableau models, we establish an explicit correspondence between the polyhedral realizations of the crystal bases \( \mathcal B(\lambda) \) (resp. \( \mathcal B(\infty) \))  of type $A_n$ and the reverse semi-standard Young tableaux (resp. reverse marginally large tableaux), thereby providing a combinatorial description of the corresponding polyhedral realizations. Furthermore, a crystal structure on the set of Gelfand–Tsetlin patterns is obtained via the correspondence between the polyhedral realization of \( \mathcal{B}(\lambda) \) and the reverse tableaux.
As applications of our framework, we present concrete combinatorial realizations of the crystal embedding of $\mathcal B(\lambda)$ into $\mathcal B(\infty)$ and the set of Lusztig data.
\end{abstract}
\maketitle
\tableofcontents
\section*{Introduction}
The crystal basis, introduced by Kashiwara \cite{Kas90,Kas91}, serves as a fundamental combinatorial tool in the study of the representation theory of quantum groups $U_q(\mathfrak g)$ associated with symmetrizable Kac-Moody Lie algebras $\mathfrak g$. The crystal bases can be viewed as bases at $q=0$, endowed with the structure of colored oriented graphs, known as crystal graphs. 
These graphs consist of nodes and arrows, where the nodes correspond to elements of the crystal bases, and the arrows are labeled by the Kashiwara operators.
The combinatorial properties of crystal graphs reflect the intrinsic combinatorial structure of quantum groups and their integrable modules.
Accordingly, providing a combinatorial realization of crystal graphs that is independent of the underlying algebraic structure of crystal bases has become a key research direction \cite{KN20,KN21,KN94,Lee07,Lee14,Na99,NZ97}.

\vskip 2mm

For the classical finite-dimensional simple Lie algebras $\mathfrak g$, Kashiwara and Nakashima provided a combinatorial realization of the crystal graphs for the highest weight crystals $\mathcal B(\lambda)$ using modified semi-standard Young tableaux \cite{KN94}. For type $A_n$, the nodes of the crystal graph of $\mathcal B(\lambda)$ are characterized by semi-standard Young tableaux (SSYT), with the action of Kashiwara operators described using the signature rule and admissible reading of tableaux. In \cite{Lee07}, Lee provided a realization of the crystal basis \( \mathcal{B}(\infty) \) of type \( A_n \) using the  marginally large tableau model. More generally, for the symmetrizable Kac-Moody Lie algebras $\mathfrak g$, Nakashima and Zelevinsky gave a polyhedral realization of the crystal bases $\mathcal B(\infty)$ and $\mathcal B(\lambda)$ using the integer points in certain polytopes \cite{NZ97, Na99}. In the polyhedral realizations, the nodes of the crystal graphs are described by sequences of integers that satisfy certain inequalities.

\vskip 2mm

In \cite{KN20, KN21}, Kanakubo and Nakashima introduced the notion of adapted sequences and employed column tableaux to describe the linear functions arising in the polyhedral realizations, thereby obtaining an explicit characterization of the polyhedral realizations of the crystal bases  $\mathcal B(\infty)$ and $\mathcal B(\lambda)$ in classical finite types. In the combinatorial descriptions of polyhedral realizations, the combinatorial model of column tableaux used by Kanakubo and Nakashima do not correspond directly to crystal base elements. 
Therefore, establishing a direct correspondence between the polyhedral realizations of crystal bases and their associated combinatorial models—without relying on the description of linear functions—represents a natural line of investigation.

\vskip 2mm

The main purpose of this paper is to explore  the correspondence between the polyhedral and Young tableau realizations of the  crystal bases $\mathcal B(\lambda)$ and $\mathcal B(\infty)$ in type $A_n$. By computing the weight functions in the polyhedral realizations, we observe that they does not coincide with the weight functions on semi-standard Young tableaux and marginally large tableaux. To resolve this discrepancy, we introduce the concept of reverse semi-standard Young tableaux (RSSYT) and reverse marginally large tableaux  (RMLT), which form special classes of plane partitions. 

\vskip 2mm

Subsequently, we obtain a crystal structure on the set of RSSYT (resp. RMLT) from the crystal structure on the set of semi-standard Young tableaux (resp. marginally large tableaux). Furthermore, by employing the models of reverse tableaux, we demonstrate that the crystal graph of $\mathcal B(\lambda)$ admits central symmetry, while the crystal graph of $\mathcal B(\infty)$ exhibits left-right (mirror) symmetry. Finally, we construct explicit correspondences between RSSTY (resp. RMLT) and the integer sequences appearing in the polyhedral realizations of $\mathcal B(\lambda)$ (resp. $\mathcal B(\infty)$), and prove that these correspondences are crystal isomorphisms.

\vskip 2mm

It is well known that there is a one-to-one correspondence between SSTY and Gelfand-Tsetlin patterns. Therefore, the crystal structure on the set of SSTY induces a crystal structure on the set of Gelfand–Tsetlin patterns. In \cite{HK20}, Hartwig and Kingston defined a crystal structure on the set of Gelfand–Tsetlin patterns and demonstrated that this structure is compatible with the crystal structure of $\mathcal B(\lambda)$. In this paper, we derive the crystal structure on the set of Gelfand–Tsetlin patterns in a natural way, using the correspondence between RSSTY and the integer sequences in the polyhedral realization of $\mathcal B(\lambda)$.

\vskip 2mm

\vskip 2mm

On the other hand, when \( q \to 0 \), the crystal basis \( \mathcal{B}(\infty) \) corresponds to the PBW basis of $U^-_q(\mathfrak g)$, which can be parametrized by the Lusztig data \( \mathbf B_{\mathbf i} \). Therefore, a combinatorial description of the crystal embedding \( \mathcal{B}(\lambda) \hookrightarrow \mathcal{B}(\infty) \otimes R_\lambda \) (here $R_\lambda$ denotes the one-point crystal; see Example \ref{ex:Rlambda})
is helpful for understanding the combinatorial structure of PBW basis. In this direction, Lee \cite{Lee14} imposed specific constraints on marginally large tableaux and thereby produced a large tableau realization of \( \mathcal{B}(\lambda) \).

\vskip 2mm
Building on explicit correspondences among polyhedral realizations, RSSYT and Gelfand--Tsetlin patterns, we construct RMLT directly from RSSYT. This identifies a natural subset of \( \mathcal T'(\infty) \) (the set of all RMLT) that carries the crystal structure of \( \mathcal B(\lambda) \). Consequently, we obtain a purely combinatorial realization of the embedding $
\mathcal B(\lambda)\hookrightarrow \mathcal B(\infty)\otimes R_\lambda$,
which, from the reverse–tableau viewpoint, recovers Lee’s construction.

\vskip 2mm
In \cite{Kwon18}, Kwon gave a combinatorial description of the embedding
\(\psi_{\lambda}^{\mathbf i}:\mathcal B(\lambda)\otimes T_{-\lambda}\hookrightarrow \mathbf B_{\mathbf i}\)
associated with a Dynkin quiver of type \(A_{n-1}\) having a single sink.
As an application of our combinatorial realization of the crystal embedding,
we provide an alternative, purely combinatorial description of
\(\psi_{\lambda}^{\mathbf i}\) for the standard reduced word
\(\mathbf i_0=(1,2,1,3,2,1,\dots,n,\dots,2,1)\).
The advantage of our description is that the Lusztig data of an RSSYT can be read off directly from the tableau.

\vskip 2mm
The paper is organized as follows. 
In Section~\ref{sec:Ac}, we recall the notion of abstract crystals for quantum groups in type $A_n$. 
In Section~\ref{sec:YTRB}, we revisit the Young–tableau realizations of the crystals $\mathcal B(\infty)$ and $\mathcal B(\lambda)$ in type $A_n$, and introduce the \emph{reverse} marginally large tableaux (RMLT) and \emph{reverse} semi–standard Young tableaux (RSSYT). Transporting the crystal structure from MLT (resp.\ SSYT) yields crystal structures on  RMLT (resp.\ RSSYT).
Section~\ref{sec:tableau_polyhedral_Binfty_Blambda} reviews the polyhedral realizations of $\mathcal B(\infty)$ and $\mathcal B(\lambda)$ in type $A_n$, and constructs explicit correspondences between the reverse--tableau models and the integer sequences appearing in polyhedral realizations. It is shown that these correspondences are crystal isomorphisms. In Section~\ref{sec:CGTP}, a crystal structure on the set of Gelfand--Tsetlin patterns is obtained from the reverse--tableau description of the polyhedral realization of $\mathcal B(\lambda)$.
Finally, Section~\ref{sec:crystal embedding} presents a combinatorial characterization of the embedding of $\mathcal B(\lambda)$ into both $\mathcal B(\infty)$ and the set formed by  Lusztig data.

\vskip 2mm
{\bf Acknowledgements.}
This work was supported by the China Postdoctoral Science Foundation under Grant Number 2024M760061 and NSFC of grant No.12501041.
The authors would like to thank the anonymous referee for the careful reading of the manuscript and for the valuable comments and suggestions, which helped improve the presentation of this paper.
\section{Abstract crystal}\label{sec:Ac}

Let \(I=\{1,2,\dots,n\}\). In the sequel, we consider the Cartan matrix \(A\) of type \(A_n\), i.e., \(A=(a_{ij})_{i,j\in I}\) with \(a_{ij}=2\delta_{ij}-\delta_{i,j+1}-\delta_{i+1,j}\). In this case, the simple roots are given by \(\alpha_i=\epsilon_i-\epsilon_{i+1}\) for \(i\in I\), and the fundamental weights are identified with \(\omega_i=\sum_{j=1}^i \epsilon_j\), where \(\{\epsilon_1,\epsilon_2,\dots,\epsilon_{n+1}\}\) is the standard basis of \(\mathbb{R}^{n+1}\).

\vskip 1mm

The weight lattice is \(P=\bigoplus_{i=1}^n \mathbb{Z}\,\omega_i\), and its dual lattice is \(P^\vee:=\mathrm{Hom}_{\mathbb{Z}}(P,\mathbb{Z})\). The simple coroots \(\{\alpha_i^\vee\}_{i\in I}\subset P^\vee\) satisfy \(\langle \alpha_i^\vee,\alpha_j\rangle=a_{ij}\) for all \(i,j\in I\). We also write \(P^+=\bigoplus_{i=1}^n \mathbb{Z}_{\geq 0}\,\omega_i\) for the set of dominant integral weights.

\vskip 1mm

Let \(q\) be an indeterminate. The quantum group \(U:=U_q(\mathfrak{sl}_{n+1})\) is the unital associative \(\mathbb{Q}(q)\)-algebra generated by \(e_i, f_i\) \((i\in I)\) and \(K_i^{\pm 1}\) \((i\in I)\), subject to the following relations:
\begin{enumerate}
	\item[(1)] \(K_iK_i^{-1}=1\) and \(K_iK_j=K_jK_i\) for all \(i,j\in I\).
	
	\item[(2)] \(K_i e_j K_i^{-1}=q^{a_{ij}}e_j\) and \(K_i f_j K_i^{-1}=q^{-a_{ij}}f_j\) for all \(i,j\in I\).
	
	\item[(3)] \(e_i f_j-f_j e_i=\delta_{ij}\dfrac{K_i-K_i^{-1}}{q-q^{-1}}\) for all \(i,j\in I\).
	
	\item[(4)] \(e_i e_j=e_j e_i\) and \(f_i f_j=f_j f_i\) for all \(i,j\in I\) with \(|i-j|>1\).
	
	\item[(5)] \(e_i^2 e_j+e_j e_i^2=(q+q^{-1})\,e_i e_j e_i\) for all \(i,j\in I\) with \(|i-j|=1\).
	
	\item[(6)] \(f_i^2 f_j+f_j f_i^2=(q+q^{-1})\,f_i f_j f_i\) for all \(i,j\in I\) with \(|i-j|=1\).
\end{enumerate}

There exist the triangular decomposition of $U$ given by $U\cong U^-\otimes U^0\otimes U^+$, where
$U^+$ (resp.\ $U^-$) is the subalgebra of $U$ generated by $\{e_i\mid i\in I\}$ (resp.\ $\{f_i\mid i\in I\}$),
and $U^0$ is the subalgebra generated by $\{K_i^{\pm1}\mid i\in I\}$.

\begin{defn}\cite[Definition 3.1]{JKKS}\label{def:abstract crystal}
	An {\it abstract $U$-crystal}   is
	a set $B$ together with the maps $\wt : B \rightarrow P$, $\tilde{e}_i,
	\tilde{f}_i: B \rightarrow B \cup \{ 0 \}$ and $\varepsilon_i,
	\varphi_i : B \rightarrow \mathbb{Z} \cup \{-\infty\}$  $(i \in I)
	$ satisfying the following conditions:
	\begin{itemize}
		\item[(i)] $\varphi_{i}(b) = \varepsilon_{i}(b) + \langle \alpha_i^\vee, \wt
		(b) \rangle$ \ for all  $ i \in I$,
		
		\item[(ii)] $ \wt(\tilde {e}_i b) = \wt (b) + \alpha_i$ \ if  $\tilde{e}_i b \in B$,
		
		\item[(iii)] $ \wt(\tilde{f}_i b) = \wt(b) - \alpha_i$ \ if  $\tilde{f}_ib \in B$,
		
		\item[(iv)] $\varepsilon_i(\tilde{e}_i b) = \varepsilon_i(b) - 1$, \
		$\varphi_i(\tilde{e}_i b) = \varphi_i(b) + 1$ \ if  $\tilde{e}_ib \in
		B$,

		\item[(v)] $\varepsilon_i(\tilde{f}_i b) = \varepsilon_i(b) + 1$, \
		$\varphi_i(\tilde{f}_i b) = \varphi_i(b) - 1$ \ if  $\tilde{f}_ib \in
		B$,
			
		\item[(vi)] $\tilde{f}_ib=b'$ \ if and only if  \ $b=\tilde{e}_ib'$ for  $b,b'\in B,\ i \in I $,
		
		\item[(vii)] If $\varphi_i(b) = -\infty$  for  $b\in B$, then
		$\tilde{e}_i b = \tilde{f}_i b = 0$.
		
	\end{itemize}
\end{defn}

\begin{Ex}\label{ex:Rlambda}
For $\lambda\in P$, let $R_\lambda:=\{r_\lambda\}$ be the set consisting of a single vector $r_\lambda$ with the following maps:
\begin{equation*}
\wt(r_\lambda)=\lambda,\quad \varepsilon_i(r_\lambda)=-\langle\alpha_i^\vee,\lambda\rangle,\quad \varphi_i(r_\lambda)=0,\quad \tilde{e}_i(r_\lambda)=\tilde{f}_i(r_\lambda)=0.
\end{equation*}
Then $R_\lambda$ is a crystal.
\end{Ex}

\begin{defn}
	Let $B_1$ and $B_2$ be crystals.
	A {\it crystal morphism} (or {\it
		morphism of crystals}) $\Psi : B_1 \rightarrow B_2$ is a map
	$\Psi : B_1 \cup \{0\} \rightarrow B_2 \cup \{0\}$ such that
	\begin{enumerate}
		\item[(i)] $\Psi(0)=0$,
		\item[(ii)] if $b \in B_1$ and $\Psi(b) \in B_2$, then $\wt(\Psi(b))=\wt(b)$, $\varepsilon_i(\Psi(b))=\varepsilon_i(b)$, and $\varphi_i(\Psi(b))=\varphi_i(b)$ for all $i \in I$,
		\item[(iii)] if $b, b' \in B_1$, $\Psi(b), \Psi(b') \in B_2$ and $\tilde{f}_i b=b'$, then $\tilde{f}_i \Psi(b)=\Psi(b')$ and $\Psi(b)=\tilde{e}_i \Psi(b')$ for all $i \in I$.
	\end{enumerate}
	A crystal morphism $\Psi : B_1 \rightarrow B_2$ is called an {\it isomorphism} if it is a bijection from $B_1 \cup \{0\}$ to $B_2 \cup \{0\}$.
\end{defn}

For two crystals  $B_1$ and $B_2 $, we define their {\it tensor product} $B_1 \otimes B_2$  as the set 
$B_1 \times B_2 $, with the crystal structure defined as follows:
\begin{equation*}
	\begin{aligned}
		\tilde {e}_i(b_1 \otimes b_2)&= \begin{cases} \tilde {e}_i b_1 \otimes b_2 &\text{if \;$\varphi_i(b_1)\ge \varepsilon_i(b_2)$}, \\
			b_1\otimes \tilde {e}_i b_2 & \text{if\; $\varphi_i(b_1) < \varepsilon_i(b_2)$}, \end{cases} \\
		\tilde {f}_i(b_1 \otimes b_2)&= \begin{cases} \tilde {f}_i b_1 \otimes b_2
			&\text{if \; $\varphi_i(b_1) > \varepsilon_i(b_2)$}, \\
			b_1\otimes \tilde {f_i} b_2 & \text{if \; $\varphi_i(b_1) \le
				\varepsilon_i(b_2)$},
		\end{cases}\\
		\wt(b_1 \otimes b_2)&= \wt(b_1)+\wt(b_2),\\
		\varepsilon_{i}(b_1 \otimes b_2)&= \max(	\varepsilon_{i}(b_1), 	\varepsilon_i(b_2) -
		\langle \alpha_i^\vee, \wt(b_1) \rangle ),\\
		\varphi_{i}(b_1 \otimes b_2)& = \max(\varphi_{i}(b_2), \varphi_i(b_1)
		+\langle \alpha_i^\vee, \wt(b_2)\rangle ).
	\end{aligned}
\end{equation*}

Under the tensor product rule, an abstract crystal structure is defined on $B_1\otimes B_2$ (\cite[Lemma 3.10]{JKKS}). More generally, an abstract crystal structure exists on $B_1\otimes B_2\otimes\cdots \otimes B_N$ for $N\geq 1$ (\cite[Lemma 3.11]{JKKS}, \cite[Proposition 2.1.1]{KN94}).

\vskip 10mm

\section{Young tableau realizations of  $\mathcal B(\infty)$ and $\mathcal B(\lambda)$ }\label{sec:YTRB}

\subsection{Marginally large tableau model of $\mathcal B(\infty)$}

In this section, we review the notation of marginally large tableaux introduced in \cite{Lee07}.

Recall that a semistandard Young tableau (SSYT) is a Young diagram filled with entries such that the rows are weakly increasing and the columns are strictly increasing. We regard each box together with its entry as a colored box; in particular, a box of color $i$
is called an $i$-box.

\begin{defn}
	A semi-standard Young tableau $T$ with $n$ non-empty rows and entries in $I\cup\{n+1\}$ is called {\it marginally large}, if for any $i\in I$, the number of $i$-boxes in the $i$-th row of $T$ is greater than the number of all boxes in the $(i+1)$-th row by exactly one. 
\end{defn}

For a given marginally large semi-standard Young tableau, we add infinite number of leftmost $i$-box to the leftmost side of each row, thus obtaining an infinite tableau, which is called {\it marginally large tableau} (MLT). 

\vskip 2mm

Let $y_j^i(T)$ $(i>j)$ denote the number of $i$-boxes in $j$-th row of a MLT $T$. If there is no risk of confusion, we shall denote $y_j^i(T)$ by $y_j^i$ for brevity.
\begin{Ex}\label{ex:LTA4}
	For the Cartan datum of type $A_4$, the infinite tableau in Figure \ref{fig:MLTA4} is a marginally large tableau.	
	\begin{figure}[H] 
		\begin{tikzpicture}[scale=0.34]
		\draw (0,0)--(46,0);
		\draw (0,-1)--(46,-1);
		\draw (0,-2)--(29,-2);	
		\draw (0,-3)--(16,-3);	
		\draw (0,-4)--(7,-4);	
		\draw (1,0)--(1,-4);	
		\draw (2,0)--(2,-4);		
		\draw (3,0)--(3,-4);			
		\draw (4,0)--(4,-4);	
		\draw (6,0)--(6,-4);			
		\draw (7,0)--(7,-4);			
		\draw (8,0)--(8,-3);		
		\draw (9,0)--(9,-3);		
		\draw (11,0)--(11,-3);	
		\draw (12,0)--(12,-3);		
		\draw (13,0)--(13,-3);	
		\draw (15,0)--(15,-3);			
		\draw (16,0)--(16,-3);			
		\draw (17,0)--(17,-2);	
		\draw (18,0)--(18,-2);			
		\draw (20,0)--(20,-2);			
		\draw (21,0)--(21,-2);			
		\draw (22,0)--(22,-2);				
		\draw (24,0)--(24,-2);			
		\draw (25,0)--(25,-2);			
		\draw (26,0)--(26,-2);			
		\draw (28,0)--(28,-2);			
		\draw (29,0)--(29,-2);
		\draw (30,0)--(30,-1);
		\draw (31,0)--(31,-1);
		\draw (33,0)--(33,-1);
		\draw (34,0)--(34,-1);
		\draw (35,0)--(35,-1);
		\draw (37,0)--(37,-1);		
		\draw (38,0)--(38,-1);			
		\draw (39,0)--(39,-1);			
		\draw (41,0)--(41,-1);		
		\draw (42,0)--(42,-1);			
		\draw (43,0)--(43,-1);
		\draw (45,0)--(45,-1);		
		\draw (46,0)--(46,-1);			
		
		\node at (0.5,-0.5){\small ...};
		\node at (0.5,-1.5){\small ...};		
		\node at (0.5,-2.5){\small ...};	
		\node at (0.5,-3.5){\small ...};	
		
		\node at (1.5,-0.5){\small 1};
		\node at (1.5,-1.5){\small 2};		
		\node at (1.5,-2.5){\small 3};	
		\node at (1.5,-3.5){\small 4};	
		
		\node at (2.5,-0.5){\small 1};
		\node at (2.5,-1.5){\small 2};		
		\node at (2.5,-2.5){\small 3};	
		\node at (2.5,-3.5){\small 4};	
		
		\node at (3.5,-0.5){\small 1};
		\node at (3.5,-1.5){\small 2};		
		\node at (3.5,-2.5){\small 3};	
		\node at (3.5,-3.5){\small 5};
		
		\node at (5,-0.5){\small ...};
		\node at (5,-1.5){\small ...};		
		\node at (5,-2.5){\small ...};	
		\node at (5,-3.5){\small ...};
		
		\node at (6.5,-0.5){\small 1};
		\node at (6.5,-1.5){\small 2};		
		\node at (6.5,-2.5){\small 3};	
		\node at (6.5,-3.5){\small 5};
		
		\node at (7.5,-0.5){\small 1};
		\node at (7.5,-1.5){\small 2};		
		\node at (7.5,-2.5){\small 3};
		
		\node at (8.5,-0.5){\small 1};
		\node at (8.5,-1.5){\small 2};		
		\node at (8.5,-2.5){\small 4};
		
		\node at (10,-0.5){\small ...};
		\node at (10,-1.5){\small ...};		
		\node at (10,-2.5){\small ...};	
		
		\node at (11.5,-0.5){\small 1};
		\node at (11.5,-1.5){\small 2};		
		\node at (11.5,-2.5){\small 4};
		
		\node at (12.5,-0.5){\small 1};
		\node at (12.5,-1.5){\small 2};		
		\node at (12.5,-2.5){\small 5};
		
		\node at (14,-0.5){\small ...};
		\node at (14,-1.5){\small ...};		
		\node at (14,-2.5){\small ...};
		
		\node at (15.5,-0.5){\small 1};
		\node at (15.5,-1.5){\small 2};		
		\node at (15.5,-2.5){\small 5};
		
		\node at (16.5,-0.5){\small 1};
		\node at (16.5,-1.5){\small 2};	
		
		\node at (17.5,-0.5){\small 1};
		\node at (17.5,-1.5){\small 3};	
		
		\node at (19,-0.5){\small ...};
		\node at (19,-1.5){\small ...};	
		
		\node at (20.5,-0.5){\small 1};
		\node at (20.5,-1.5){\small 3};	
		
		\node at (21.5,-0.5){\small 1};
		\node at (21.5,-1.5){\small 4};	
		
		\node at (23,-0.5){\small ...};
		\node at (23,-1.5){\small ...};	
		
		\node at (24.5,-0.5){\small 1};
		\node at (24.5,-1.5){\small 4};	
		
		\node at (25.5,-0.5){\small 1};
		\node at (25.5,-1.5){\small 5};	
		
		\node at (27,-0.5){\small ...};
		\node at (27,-1.5){\small ...};	
		
		\node at (28.5,-0.5){\small 1};
		\node at (28.5,-1.5){\small 5};	
		
		\node at (29.5,-0.5){\small 1};
		\node at (30.5,-0.5){\small 2};
		\node at (32,-0.5){\small ...};
		\node at (33.5,-0.5){\small 2};
		\node at (34.5,-0.5){\small 3};
		\node at (36,-0.5){\small ...};
		\node at (37.5,-0.5){\small 3};
		\node at (38.5,-0.5){\small 4};
		\node at (40,-0.5){\small ...};
		\node at (41.5,-0.5){\small 4};
		\node at (42.5,-0.5){\small 5};
		\node at (44,-0.5){\small ...};
		\node at (45.5,-0.5){\small 5};
		
		\draw[decorate,decoration={mirror,brace}] (3,-4.2)--(7,-4.2);
		\node at (5,-5.3){\small $y_4^5$};
		
		\draw[decorate,decoration={mirror,brace}] (8,-3.2)--(12,-3.2);
		\node at (10,-4.3){\small $y_3^4$};
		
		\draw[decorate,decoration={mirror,brace}] (12,-3.2)--(16,-3.2);
		\node at (14,-4.3){\small $y_3^5$};
		
		\draw[decorate,decoration={mirror,brace}] (17,-2.2)--(21,-2.2);
		\node at (19,-3.3){\small $y_2^3$};
		
		\draw[decorate,decoration={mirror,brace}] (21,-2.2)--(25,-2.2);
		\node at (23,-3.3){\small $y_2^4$};
		
		\draw[decorate,decoration={mirror,brace}] (25,-2.2)--(29,-2.2);
		\node at (27,-3.3){\small $y_2^5$};
		
		\draw[decorate,decoration={mirror,brace}] (30,-1.2)--(34,-1.2);
		\node at (32,-2.3){\small $y_1^2$};
		
		\draw[decorate,decoration={mirror,brace}] (34,-1.2)--(38,-1.2);
		\node at (36,-2.3){\small $y_1^3$};
		
		\draw[decorate,decoration={mirror,brace}] (38,-1.2)--(42,-1.2);
		\node at (40,-2.3){\small $y_1^4$};
		
		\draw[decorate,decoration={mirror,brace}] (42,-1.2)--(46,-1.2);
		\node at (44,-2.3){\small $y_1^5$};		
		\end{tikzpicture}
		\vskip -3.5mm
		\caption{Marginally large tableau of type $A_4$}\label{fig:MLTA4}
	\end{figure}
\end{Ex}

\begin{Rmk}
	A marginally large tableau $Y$ can be uniquely determined by the values of the sequence  $(y_j^i)_{i>j}$ for $j\in I$ and $i\in \{2,3,\cdots,n+1\}$.
\end{Rmk}

Let $\mathcal T(\infty)$ denote the set consisting of all marginally large tableaux $T$. The Kashiwara operators $\tilde{f}_i$ and $\tilde{e}_i$ on $\mathcal T(\infty)$ are defined as follows (\cite[Section 4]{Lee07}):
\begin{enumerate}
	\item[(i)]  
	Consider the infinite sequence of colored boxes obtained by applying the Far-Eastern reading to $T\in \mathcal T(\infty)$. For each entry \( b \) in this sequence and for a fixed index \( i \in I \), we assign a sign \textbf{$``-"$} to the box if \( b = i+1 \), and a sign \textbf{$``+"$} if \( b = i \);  
	otherwise, we assign nothing. 
From this sequence of $+$'s and $-$'s, we cancel out all $(+,-)$ pairs. The remaining sequence $-$'s followed by $+$'s is called the \textit{$i$-signature} of $T$.
	
	\vskip 1mm
	
	\item[(ii)] Denote by $T'$ the tableau obtained from $T$ by replacing the entry $i$ by $i+1$  in the box corresponding to the leftmost $+$ in the $i$-signature of $T$.
	\begin{enumerate}
		\item[(a)]  If $T'$ is marginally large, then we define $\tilde{f}_i{T}$ to be $T'$.
		\item[(b)] If $T'$ is not marginally large, then we define $\tilde{f}_i{T}$ to be the MLT obtained by pushing all the rows appearing below the changed box in $T'$ to the left by one box.
	\end{enumerate}
	
	\vskip 1mm
	
	\item[(iii)] Denote by $T''$ the tableau obtained from $T$ by replacing the entry $i$ by $i-1$  in the box corresponding to the rightmost $-$ in the $i$-signature of $T$.
	\begin{enumerate}
		\item[(a)]  If $T''$ is marginally large, then we define $\tilde{e}_i{T}$ to be $T''$.
		\item[(b)] If $T''$ is not marginally large, then we define $\tilde{e}_i{T}$ to be the MLT obtained by pushing all the rows appearing below the changed box in $T''$ to the right by one box.
	\end{enumerate}
	
	\vskip 1mm
	
	\item[(iv)] If there is no $-$ in the $i$-signature of $T$, then we define $
	\tilde{e}_iT=0$.
	
\end{enumerate}

\vskip 1mm

We define $\wt:\mathcal T(\infty)\to P$, $\varepsilon_i, \varphi_i:\mathcal T(\infty)\to\mathbb Z$ as follows:
\begin{equation}\label{eq:wt_epsilon_phi}
\begin{aligned}
&\wt(T)=-\sum_{i=1}^n(\sum_{k=1}^i(\sum_{l=i+1}^{n+1}y_k^l))\alpha_i,\\
&\varepsilon_i(T)=\text{the number of $-$'s in the $i$-signature of $T$},\\
&\varphi_i(T)=\varepsilon_i(T)+\langle\alpha_i^\vee,\wt(T)\rangle.
\end{aligned}
\end{equation}

\vskip 1mm

Based on the structure of the marginally large tableau \( T \), we can conclude that
\begin{equation}\label{eq:epsilonTinfty}
	\varepsilon_i(T)=\max_{1\leq j\leq i}\{\sum_{k=1}^j(y_k^{i+1}-y_{k-1}^{i})\}.
\end{equation}

\vskip 1mm

\begin{Ex}
Fix $n=4$ and consider the following marginally large tableau $T$:
\[
\raisebox{-1.5\baselineskip}{$T=\ $}\; \scalebox{0.9}
{$ 
\begin{ytableau} 
	1&1&1&1&1&1&1&1&1&2\\ 
	2&2&2&2&2&3&3&4\\ 
	3&3&3&4\\ 
	4&5 
\end{ytableau} 
$}
\]

Applying the Far--Eastern reading to $T$, a word of $T$ together with the signs for $2,3$ and the $2$-signature of $T$ is obtained as follows:

\[
\begin{array}{r@{}*{24}{@{\hspace{4pt}}c}} w(T)= &2&1&1&4&1&3&1&3&1&2&1&2&4&1&2&3&1&2&3&5&1&2&3&4\\[0.6ex] &+& & & & &-& &-& &+& &+& & &+&-& &+&-& & &+&-&\\[1.2ex] & & & & & & & &-& &+& &+& & & & & & & & & & & & \end{array}
\]

Then the actions of $\tilde f_2$ and $\tilde e_2$ on $T$ are given by

\[
\raisebox{-1.5\baselineskip}{$\tilde f_2T=\ $} \scalebox{0.9}{$ \begin{ytableau} 1&1&1&1&1&1&1&1&1&2\\ 2&2&2&2&3&3&3&4\\ 3&3&4\\ 5 \end{ytableau} $} \quad\qquad \raisebox{-1.5\baselineskip}{$\tilde e_2T=\ $} \scalebox{0.9}{$ \begin{ytableau} 1&1&1&1&1&1&1&1&1&2\\ 2&2&2&2&2&2&3&4\\ 3&3&3&3&4\\ 4&4&5 \end{ytableau} $}
\]

\vskip 2mm

It follows from \eqref{eq:wt_epsilon_phi} that
\begin{align*}
	&\wt(T) = -(\alpha_1+3\alpha_2+2\alpha_3+\alpha_4),\\
	&(\varepsilon_1(T),\varepsilon_2(T),\varepsilon_3(T),\varepsilon_4(T)) 
	= (1,1,1,0),\\
	&(\varphi_1(T),\varphi_2(T),\varphi_3(T),\varphi_4(T)) 
	= (2,-2,1,0).
\end{align*}

\end{Ex}

\vskip 4mm

Let $\mathcal B(\infty)$ be the crystal of $U^-$ (\cite[Section 3.5]{Kas91}), then we have the following theorem.
\begin{Thm}\cite[Theorem 4.8]{Lee07}\label{thm:realizationBinfty}
	The set $\mathcal T(\infty)$ with the maps $\tilde{e}_i$, $\tilde{f}_i$, $\varepsilon_i$, $\varphi_i$ $(i\in I)$ and  $\wt$ form a crystal of type $A_n$. There exists an $U_q(A_n)$-crystal isomorphism:
	\begin{align*}
	\mathcal B(\infty)\to \mathcal T(\infty),\quad u_{\infty}\to T_{\infty},
	\end{align*}
	where $u_\infty\in\mathcal B(\infty)$ is the vector corresponding to $1\in U^-_q(A_n)$ and $T_\infty$ is the marginally large tableau such that $y_j^i=0$ for $i>j$, $j\in I$ and $i\in \{2,3,\cdots,n+1\}$.
\end{Thm}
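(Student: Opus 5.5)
Since $\mathcal T(\infty)$ is claimed to be isomorphic to $\mathcal B(\infty)$, the natural tool is the Kashiwara--Saito characterization of $\mathcal B(\infty)$. It says that a crystal $B$ is isomorphic to $\mathcal B(\infty)$ if it satisfies the following:
\begin{itemize}
\item[(a)] $\wt(B)\subset -\sum_i\mathbb Z_{\ge0}\alpha_i$;
\item[(b)] there is a unique $b_0$ of weight $0$;
\item[(c)] $\varepsilon_i(b_0)=0$ and $\varepsilon_i(b)\in\mathbb Z$;
\item[(d)] there is a strict embedding $\Psi_i\colon B\to B\otimes B_i$ for every $i$, where $B_i$ is the elementary crystal;
\item[(e)] $\Psi_i(B)\subset B\otimes\{\tilde f_i^m b_i\mid m\ge0\}$;
\item[(f)] every $b\neq b_0$ satisfies $\Psi_i(b)=b'\otimes\tilde f_i^m b_i$ with $m>0$ for some $i$.
\end{itemize}
The plan is therefore to equip $\mathcal T(\infty)$ with such embeddings. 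This is the route taken in \cite{Lee07}.

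First I check that $\mathcal T(\infty)$ is an abstract crystal in the sense of Definition \ref{def:abstract crystal}. Conditions (i)--(iii) follow from \eqref{eq:wt_epsilon_phi}, because $\tilde f_i$ turns one $i$-box in some row $k\le i$ into an $(i+1)$-box. This raises $\sum_{k\le i}\sum_{l>i}y_k^l$ by one. The pushing step in (ii)(b) only restores marginal largeness and does not change the $y$'s of the rows below. For (iv)--(vi) I use the usual signature argument: changing the leftmost unmatched $+$ into a $-$ produces a word whose rightmost unmatched $-$ is exactly that box. I also have to check that the shift in (ii)(b) inserts or deletes only whole columns of the form $(1,\dots,k)$, which carry no unmatched $i$-signs. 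Hence $\tilde e_i\tilde f_i=\mathrm{id}$, and formula \eqref{eq:epsilonTinfty} is consistent.

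Next come the uniqueness axioms. From \eqref{eq:wt_epsilon_phi} the weight is $0$ exactly when all $y_k^l=0$, so (a), (b) and (c) hold with $b_0=T_\infty$. For (f), take $T\ne T_\infty$. Choose the smallest $i$ for which some row has a nonzero $y_k^{i+1}$, and take the maximal such $k$. I will show that the matching embedding $\Psi_i$ records a positive $m$.

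The central and hardest step is building $\Psi_i$. I would define $\varepsilon_i^*$ and $\tilde e_i^*$ by the $*$-involution, realized on tableaux through reading words and the transpose dual of the $i$-signature. A more concrete option is to embed $\mathcal T(\infty)$ into $\mathcal T(\infty)\otimes B_i$ by $T\mapsto (\tilde e_i^*)^{m}T\otimes\tilde f_i^m b_i$ with $m=\varepsilon_i^*(T)$. If the $*$-operators prove unwieldy, I would switch to a different argument. Any $T$ can be written as $\tilde f_{i_1}\cdots\tilde f_{i_r}T_\infty$ by induction on $-\wt$, using (f)-type reasoning with $\tilde e_i$. One then compares with $\mathcal B(\infty)$ through the limit $\mathcal B(\infty)=\varinjlim \mathcal B(\lambda)\otimes R_{-\lambda}$. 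Here $\mathcal B(\lambda)$ is realized by SSYT of shape $\lambda$ via \cite{KN94}. An MLT truncated to a large enough shape $\lambda$, with column lengths $n,n-1,\dots$ chosen large, is an SSYT of shape $\lambda$. For $\lambda$ large the tableau operators agree with the Kashiwara--Nakashima ones, because the added columns $(1,\dots,k)$ contribute only matched or irrelevant signs, and the shift in (ii)(b) is exactly the normalization of the stable limit. The isomorphism then follows from compatibility with the directed system, and sending $T_\infty$ to $u_\infty$ matches the highest-weight vectors. The main obstacle is showing that the left push in (ii)(b) corresponds precisely to the transition map $\mathcal B(\lambda)\to\mathcal B(\lambda+\mu)$ under truncation.
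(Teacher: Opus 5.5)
The paper gives no proof of this theorem; it is quoted from \cite{Lee07}. Your outline therefore has to stand on its own, and it does not: in neither route is the identification with $\mathcal B(\infty)$ actually made.

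In the Kashiwara--Saito route, the strict embeddings $\Psi_i$ are the entire content of the criterion. ``Realize the $*$-involution through reading words and the transpose dual of the $i$-signature'' is not a definition. You would need explicit $\tilde e_i^*,\varepsilon_i^*$ on marginally large tableaux, and a proof that $T\mapsto(\tilde e_i^*)^mT\otimes\tilde f_i^m b_i$ commutes with every $\tilde e_j,\tilde f_j$.

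Your plan for (f) is also wrong as stated. For the smallest $i$ with some $y_k^{i+1}\neq0$, all $y_m^i$ vanish, so \eqref{eq:epsilonTinfty} gives $\varepsilon_i(T)>0$. That proves connectedness: every $T$ is an $\tilde f$-word applied to $T_\infty$. Connectedness together with (a)--(c) is far from characterizing $\mathcal B(\infty)$, and it says nothing about $\varepsilon_i^*(T)$, which is what $\Psi_i$ records. In fact your choice of $i$ fails. Take $n=2$ and $T=\tilde f_1\tilde f_2T_\infty$, whose finite part has rows $1\,1\,1\,2$ and $2\,3$. Your rule picks $i=1$. The $*$-dual of $T$ is $\tilde f_2\tilde f_1T_\infty$, because the canonical basis element is $f_1f_2$ and $*$ reverses products. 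Its finite part has rows $1\,1\,3$ and $2$, and it has $\varepsilon_1=0$. Hence $\varepsilon_1^*(T)=0$ and $\Psi_1(T)=T\otimes b_1$; only $i=2$ works for this $T$.

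The direct-limit route is the right framework, but you stop exactly at its substance. Moreover, the sign fact you lean on, both for axioms (iv)--(vi) and for the comparison with Kashiwara--Nakashima tableaux, is false. A column $(1,\dots,k)$ contributes a lone ``$+$'' to the $k$-signature. In a marginally large tableau this $+$ is unmatched, since every sign read after it comes from an in-column pair $(k,k+1)$ or from a lone $k$.

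Concretely, for $i<n$, case (ii)(b) occurs exactly when $\tilde f_i$ hits the $i$ of the unique column $(1,\dots,i)$ just right of the end of row $i+1$. The push then amounts to replacing that column by $(1,\dots,i-1,i+1)$ with a new column $(1,\dots,i)$ immediately to its left. So the signature changes by $+\mapsto -+$, not $+\mapsto -$. What saves (iv)--(vi) is that the inserted $+$ (or, under $\tilde e_i$, the deleted one) only adds to or removes from the block of unmatched $+$'s immediately after the changed sign.

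That same mechanism is what the limit argument must establish, in three steps:
\begin{itemize}
\item To land in a fixed $\mathcal B(\lambda)$, you must pad $T$ with extra columns $(1,\dots,k)$ next to the end of row $k+1$. Padding only with full columns on the far left gives a shape that depends on $T$, and the push changes that shape.
\item You must prove that, after stripping surplus basic columns, the Kashiwara--Nakashima operators become Lee's operators, with the push accounting exactly for the operator hitting a column $(1,\dots,i)$.
\item You must check compatibility with the transition maps $b\otimes r_{-\lambda}\mapsto (b\otimes v_\mu)\otimes r_{-\lambda-\mu}$.
\end{itemize}
None of this is carried out, so your proposal does not yet prove $\mathcal B(\infty)\cong\mathcal T(\infty)$.
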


\subsection{Young tableau model of $\mathcal B(\lambda)$}
Let \(\lambda \in P^+\) be a dominant integral weight. Then \(\lambda\) can be expressed as
$
\lambda = \sum_{i=1}^n a_i \omega_i$ with  $a_i \geq 0$.
Define
$
\lambda_i := \sum_{j=i}^n a_j$ for $1 \leq i \leq n$.
With this, \(\lambda\) can also be written in terms of the standard basis \(\{\epsilon_i\}\) as
$
\lambda = \sum_{i=1}^n \lambda_i \epsilon_i$.
Therefore, the dominant integral weight $\lambda$ corresponds to a Young diagram $(\lambda_1,\lambda_2,\cdots,\lambda_n,0,0,\cdots)$. 

\vskip 1mm

By a slight abuse of notation, we shall also denote by \(\lambda\) the Young diagram corresponding to the dominant weight \(\lambda\).  
Let \(\mathcal{T}(\lambda)\) denote the set of all semi-standard Young tableaux (SSYT) of shape \(\lambda\) with entries in the set \(\{1, 2, \ldots, n+1\}\).

\vskip 1mm

The Kashiwara operators \(\tilde{e}_i\) and \(\tilde{f}_i: \mathcal{T}(\lambda) \to \mathcal{T}(\lambda) \cup \{0\}\) are defined in the same way as in the case of \(\mathcal{T}(\infty)\), except that the condition of marginally large is not required.

\vskip 1mm

For any $ T\in \mathcal T(\lambda)$, we define $\wt : \mathcal T(\lambda) \rightarrow P$ and $\varepsilon_i,
\varphi_i : \mathcal T(\lambda) \rightarrow \mathbb{Z} \cup \{-\infty\}$  $(i \in I)
$ as follows:
\begin{equation}\label{eq:crystal on Ylambda}
\begin{aligned}
\wt(T)&=\sum_{1\leq i\leq n+1}|T|_i\epsilon_i,\ \text{where}\ |Y|_i\ \text{is the number of $i$-boxes in}\ T,  \\
\varepsilon_i(T)&=\text{the number of $``-"$ in the $i$-signature of $T$},\\
\varphi_i(T)&=\text{the number of $``+"$ in the $i$-signature of $T$}.
\end{aligned}
\end{equation}

Let $\mathcal B(\lambda)$ be the crystal of the highest weight module $V(\lambda)$ over quantum group $U_q(A_n)$ (\cite[Section 4]{Kas90}).

\begin{Thm}\cite[Theorem 3.4.2]{KN94}
For any \(\lambda \in P^+\), the set \(\mathcal{T}(\lambda)\), together with the maps \(\tilde{e}_i\), \(\tilde{f}_i\), \(\varepsilon_i\), \(\varphi_i\) \((i \in I)\), and \(\mathrm{wt}\), forms a crystal of type \(A_n\). Moreover, the highest weight crystal \(\mathcal{B}(\lambda)\) is isomorphic to the crystal \(\mathcal{T}(\lambda)\).
\end{Thm}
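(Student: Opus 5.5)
The statement to prove is that $\mathcal T(\lambda)$, with the Kashiwara operators given by the signature rule and the maps $\wt,\varepsilon_i,\varphi_i$ of \eqref{eq:crystal on Ylambda}, is a crystal isomorphic to $\mathcal B(\lambda)$. The plan is to realize $\mathcal T(\lambda)$ inside a tensor power of the vector representation crystal and then use the standard characterization of $\mathcal B(\lambda)$ as a connected component.

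\textbf{Step 1: the single-box crystal.} Let $\mathcal B(\omega_1)=\{\,\boxed{1},\dots,\boxed{n+1}\,\}$ with
\[
\tilde f_i\,\boxed{i}=\boxed{i+1},\qquad \wt(\boxed{k})=\epsilon_k .
\]
This is the crystal of the vector representation $V(\omega_1)$.

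\textbf{Step 2: the reading embedding.} Let $N=|\lambda|$. Send $T\in\mathcal T(\lambda)$ to $b_{w_1}\otimes\cdots\otimes b_{w_N}\in\mathcal B(\omega_1)^{\otimes N}$, where $w(T)=w_1\cdots w_N$ is the Far-Eastern reading.

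With the tensor product rule of Section~\ref{sec:Ac}, iterated over $N$ factors, the action of $\tilde f_i,\tilde e_i$ on such a word is the signature rule. One computes $\varphi_i,\varepsilon_i$ by induction on $N$ from the $\max$ formulas. Each $\boxed{i}$ contributes a $+$ and each $\boxed{i+1}$ a $-$. After cancelling $(+,-)$ pairs, $\tilde f_i$ acts on the factor carrying the leftmost uncancelled $+$, and $\tilde e_i$ on the one carrying the rightmost uncancelled $-$. (One should check that this matches the tensor convention fixed in Section~\ref{sec:Ac}; if it does not, reverse the reading.)

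It follows that the numbers of uncancelled $-$ and $+$ are exactly $\varepsilon_i$ and $\varphi_i$ in \eqref{eq:crystal on Ylambda}, and the weight is additive. Hence the map is compatible with all crystal data, provided its image is stable under $\tilde e_i,\tilde f_i$ (together with $0$).

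\textbf{Step 3: closure, the main obstacle.} We must show that changing the relevant $i$ into $i+1$ (or $i+1$ into $i$) keeps the tableau semistandard. Since the entries of a column are strictly increasing, each column contains at most one $i$ and at most one $i+1$. If a column contains both, they are adjacent and read consecutively, as $i+1$ followed by $i$ in the Far-Eastern reading (columns read top to bottom, right to left), so they form a cancelled pair.

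Consequently the leftmost uncancelled $+$ is an $i$ with no $i+1$ directly below it, so changing it creates no column violation. It is also the rightmost $i$ among the uncancelled ones in its row region. If an $i$ to its right in the same row remained uncancelled, it would be read earlier and would itself be the leftmost uncancelled $+$. Hence the row stays weakly increasing. The argument for $\tilde e_i$ is symmetric. Thus $\mathcal T(\lambda)\cup\{0\}$ is stable, and $\mathcal T(\lambda)$ is a subcrystal, a union of connected components of $\mathcal B(\omega_1)^{\otimes N}$.

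\textbf{Step 4: connectedness and highest weight.} Kashiwara's theorem states that $\mathcal B(\omega_1)^{\otimes N}$ is the crystal of $V(\omega_1)^{\otimes N}$, and each of its connected components is $\mathcal B(\mu)$, where $\mu$ is the weight of the component's unique highest weight vector.

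So it suffices to show that the tableau $T_\lambda$ whose $k$-th row is filled with $k$'s is the only $T\in\mathcal T(\lambda)$ with $\tilde e_iT=0$ for all $i$. Suppose $T\neq T_\lambda$. Take the smallest $k$ such that row $k$ contains an entry larger than $k$, and let $i+1>k$ be the smallest such entry in that row. Rows above row $k$ are constant, so every $i$ lies strictly above row $k$. A counting argument on the reading word then shows that some $i+1$ has no $i$ available to cancel it. Hence $\varepsilon_i(T)>0$, and $T$ is not highest weight.

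Therefore $\mathcal T(\lambda)$ is a single component with highest weight $\lambda$, and $\mathcal T(\lambda)\cong\mathcal B(\lambda)$.
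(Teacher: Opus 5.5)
The paper gives no proof of this statement; it simply cites \cite{KN94}. Your strategy is the standard argument behind that citation: embed $\mathcal T(\lambda)$ into $\mathcal B(\omega_1)^{\otimes N}$ via the Far-Eastern reading, show the semistandard words are stable under the operators, and show there is a unique highest weight element. Your convention check in Step 2 comes out the right way. With the tensor rule of Section~\ref{sec:Ac}, signs are read left to right along the word and $(+,-)$ pairs cancel, exactly as in the paper's signature rule. However, two steps as written do not establish what they claim.

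\emph{Step 3, rows.} You show that no \emph{uncancelled} $i$ lies to the right of the chosen box $b=(r,c)$. That does not give weak increase, because a \emph{cancelled} $i$ in box $(r,c+1)$ would end up to the right of the new $i+1$. You must rule out $(r,c+1)=i$ entirely.
\begin{itemize}
\item The boxes read strictly between $(r,c+1)$ and $b$ are the $(r',c+1)$ with $r'>r$ and the $(r',c)$ with $r'<r$.
\item Of these, only $(r+1,c+1)$ can contain $i+1$. That would force $(r+1,c)=i+1$ directly below $b$, which you have already excluded. So no $-$ lies between $(r,c+1)$ and $b$.
\item Cancelled pairs are properly nested. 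Hence the $+$ at $(r,c+1)$ could only be cancelled by a $-$ read after $b$, and that would cancel $b$ too.
\item So $(r,c+1)$ would be an uncancelled $+$ read before $b$, contradicting the choice of $b$.
\end{itemize}
The $\tilde e_i$ case needs the same nesting argument, with $(r-1,c-1)$ playing the role of $(r+1,c+1)$. There is also a slip earlier in Step 3. Within a column the $i$ is read \emph{before} the $i+1$, and that is precisely why they form a cancelling $(+,-)$ pair. Read as ``$i+1$ followed by $i$'' they would not cancel.

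\emph{Step 4.} The claim that every $i$ lies strictly above row $k$ is false. If $i>k$, no $i$ occurs in rows $1,\dots,k$ at all; if $i=k$, the $i$'s lie in row $k$. The unstated counting argument should be replaced by the following.
\begin{itemize}
\item Let $(k,c)$ contain $i+1$. Every box read before it lies in one of three places: a row above $k$ (entry $<k\le i$), row $k$ to its right (entry $\ge i+1$), or below such a box (entry $\ge i+2$).
\item Hence no $i$ precedes this $i+1$ in the word. It stays uncancelled, so $\varepsilon_i(T)\ge 1$.
\end{itemize}
Existence then comes for free. $\mathcal T(\lambda)$ is a nonempty union of components, each containing a highest weight element, and the only candidate is the tableau with row $k$ filled by $k$. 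That tableau is the paper's $T^\lambda$; the paper reserves $T_\lambda$ for the lowest weight element. With these two repairs the argument is complete.
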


\vskip 2mm

\subsection{Reverse tableau model for $\mathcal B(\infty)$}

For each marginally large tableau \( T \) of type \( A_n \), we define the corresponding reverse marginally large tableau (RMLT) \( T' \) by replacing each entry \( a \) in \( T \) with \( n+2 - a \). Let \( z_i^j := y_i^{n+2-j} \) for \( 1 \leq i \leq n \) and \( 2 \leq i + j \leq n + 1 \).

\vskip 1mm

For instance, the RMLT corresponding to the marginally large tableau given in  Example \ref{ex:LTA4} is presented in Figure \ref{fig:RMLTA4}.
\begin{figure}[H] 
	\begin{tikzpicture}[scale=0.34]
		\draw (0,0)--(46,0);
		\draw (0,-1)--(46,-1);
		\draw (0,-2)--(29,-2);	
		\draw (0,-3)--(16,-3);	
		\draw (0,-4)--(7,-4);	
		\draw (1,0)--(1,-4);	
		\draw (2,0)--(2,-4);		
		\draw (3,0)--(3,-4);			
		\draw (4,0)--(4,-4);	
		\draw (6,0)--(6,-4);			
		\draw (7,0)--(7,-4);			
		\draw (8,0)--(8,-3);		
		\draw (9,0)--(9,-3);		
		\draw (11,0)--(11,-3);	
		\draw (12,0)--(12,-3);		
		\draw (13,0)--(13,-3);	
		\draw (15,0)--(15,-3);			
		\draw (16,0)--(16,-3);			
		\draw (17,0)--(17,-2);	
		\draw (18,0)--(18,-2);			
		\draw (20,0)--(20,-2);			
		\draw (21,0)--(21,-2);			
		\draw (22,0)--(22,-2);				
		\draw (24,0)--(24,-2);			
		\draw (25,0)--(25,-2);			
		\draw (26,0)--(26,-2);			
		\draw (28,0)--(28,-2);			
		\draw (29,0)--(29,-2);
		\draw (30,0)--(30,-1);
		\draw (31,0)--(31,-1);
		\draw (33,0)--(33,-1);
		\draw (34,0)--(34,-1);
		\draw (35,0)--(35,-1);
		\draw (37,0)--(37,-1);		
		\draw (38,0)--(38,-1);			
		\draw (39,0)--(39,-1);			
		\draw (41,0)--(41,-1);		
		\draw (42,0)--(42,-1);			
		\draw (43,0)--(43,-1);
		\draw (45,0)--(45,-1);		
		\draw (46,0)--(46,-1);			
		
		\node at (0.5,-0.5){\small ...};
		\node at (0.5,-1.5){\small ...};		
		\node at (0.5,-2.5){\small ...};	
		\node at (0.5,-3.5){\small ...};	
		
		\node at (1.5,-0.5){\small 5};
		\node at (1.5,-1.5){\small 4};		
		\node at (1.5,-2.5){\small 3};	
		\node at (1.5,-3.5){\small 2};	
		
		\node at (2.5,-0.5){\small 5};
		\node at (2.5,-1.5){\small 4};		
		\node at (2.5,-2.5){\small 3};	
		\node at (2.5,-3.5){\small 2};	
		
		\node at (3.5,-0.5){\small 5};
		\node at (3.5,-1.5){\small 4};		
		\node at (3.5,-2.5){\small 3};	
		\node at (3.5,-3.5){\small 1};
		
		\node at (5,-0.5){\small ...};
		\node at (5,-1.5){\small ...};		
		\node at (5,-2.5){\small ...};	
		\node at (5,-3.5){\small ...};
		
		\node at (6.5,-0.5){\small 5};
		\node at (6.5,-1.5){\small 4};		
		\node at (6.5,-2.5){\small 3};	
		\node at (6.5,-3.5){\small 1};
		
		\node at (7.5,-0.5){\small 1};
		\node at (7.5,-1.5){\small 2};		
		\node at (7.5,-2.5){\small 3};
		
		\node at (8.5,-0.5){\small 5};
		\node at (8.5,-1.5){\small 4};		
		\node at (8.5,-2.5){\small 2};
		
		\node at (10,-0.5){\small ...};
		\node at (10,-1.5){\small ...};		
		\node at (10,-2.5){\small ...};	
		
		\node at (11.5,-0.5){\small 5};
		\node at (11.5,-1.5){\small 4};		
		\node at (11.5,-2.5){\small 2};
		
		\node at (12.5,-0.5){\small 5};
		\node at (12.5,-1.5){\small 4};		
		\node at (12.5,-2.5){\small 1};
		
		\node at (14,-0.5){\small ...};
		\node at (14,-1.5){\small ...};		
		\node at (14,-2.5){\small ...};
		
		\node at (15.5,-0.5){\small 5};
		\node at (15.5,-1.5){\small 4};		
		\node at (15.5,-2.5){\small 1};
		
		\node at (16.5,-0.5){\small 5};
		\node at (16.5,-1.5){\small 4};	
		
		\node at (17.5,-0.5){\small 5};
		\node at (17.5,-1.5){\small 3};	
		
		\node at (19,-0.5){\small ...};
		\node at (19,-1.5){\small ...};	
		
		\node at (20.5,-0.5){\small 5};
		\node at (20.5,-1.5){\small 3};	
		
		\node at (21.5,-0.5){\small 5};
		\node at (21.5,-1.5){\small 2};	
		
		\node at (23,-0.5){\small ...};
		\node at (23,-1.5){\small ...};	
		
		\node at (24.5,-0.5){\small 5};
		\node at (24.5,-1.5){\small 2};	
		
		\node at (25.5,-0.5){\small 5};
		\node at (25.5,-1.5){\small 1};	
		
		\node at (27,-0.5){\small ...};
		\node at (27,-1.5){\small ...};	
		
		\node at (28.5,-0.5){\small 5};
		\node at (28.5,-1.5){\small 1};	
		
		\node at (29.5,-0.5){\small 5};
		\node at (30.5,-0.5){\small 4};
		\node at (32,-0.5){\small ...};
		\node at (33.5,-0.5){\small 4};
		\node at (34.5,-0.5){\small 3};
		\node at (36,-0.5){\small ...};
		\node at (37.5,-0.5){\small 3};
		\node at (38.5,-0.5){\small 2};
		\node at (40,-0.5){\small ...};
		\node at (41.5,-0.5){\small 2};
		\node at (42.5,-0.5){\small 1};
		\node at (44,-0.5){\small ...};
		\node at (45.5,-0.5){\small 1};
		
		\draw[decorate,decoration={mirror,brace}] (3,-4.2)--(7,-4.2);
		\node at (5,-5.3){\small $z_4^1$};
		
		\draw[decorate,decoration={mirror,brace}] (8,-3.2)--(12,-3.2);
		\node at (10,-4.3){\small $z_3^2$};
		
		\draw[decorate,decoration={mirror,brace}] (12,-3.2)--(16,-3.2);
		\node at (14,-4.3){\small $z_3^1$};
		
		\draw[decorate,decoration={mirror,brace}] (17,-2.2)--(21,-2.2);
		\node at (19,-3.3){\small $z_2^3$};
		
		\draw[decorate,decoration={mirror,brace}] (21,-2.2)--(25,-2.2);
		\node at (23,-3.3){\small $z_2^2$};
		
		\draw[decorate,decoration={mirror,brace}] (25,-2.2)--(29,-2.2);
		\node at (27,-3.3){\small $z_2^1$};
		
		\draw[decorate,decoration={mirror,brace}] (30,-1.2)--(34,-1.2);
		\node at (32,-2.3){\small $z_1^4$};
		
		\draw[decorate,decoration={mirror,brace}] (34,-1.2)--(38,-1.2);
		\node at (36,-2.3){\small $z_1^3$};
		
		\draw[decorate,decoration={mirror,brace}] (38,-1.2)--(42,-1.2);
		\node at (40,-2.3){\small $z_1^2$};
		
		\draw[decorate,decoration={mirror,brace}] (42,-1.2)--(46,-1.2);
		\node at (44,-2.3){\small $z_1^1$};	
	\end{tikzpicture}
	\vskip -3mm
	\caption{ reverse marginally large tableau of type $A_4$}\label{fig:RMLTA4}
\end{figure}

Let \( \mathcal T'(\infty) \) denote the set of RMLT of type \( A_n \). We define a bijection \( \eta: \mathcal T'(\infty) \to \mathcal T(\infty) \) by subtracting each number in RMLT from \( n+2 \). 

\vskip 1mm

For any $ T\in \mathcal T'(\infty)$, we define $\wt : \mathcal T'(\infty) \rightarrow P$, $\tilde{e}_i,
\tilde{f}_i:\mathcal T'(\infty) \rightarrow \mathcal T'(\infty) \cup \{ 0 \}$ 
and $\varepsilon_i,
\varphi_i : \mathcal T'(\infty) \rightarrow \mathbb{Z} \cup \{-\infty\}$  $(i \in I)
$ as follows:
\begin{equation}\label{eq:crystal on Yrinfty}
	\begin{aligned}
		&\wt(T)=-\sum_{i=1}^n(\sum_{j=1}^{n+1-i}\sum_{k=1}^iz_j^k)\alpha_i,  \\
		&\tilde{e}_iT=\eta^{-1}(\tilde{e}_{n+1-i}\eta(T)),\quad\tilde{f}_iT=\eta^{-1}(\tilde{f}_{n+1-i}\eta(T)),\\
		&\varepsilon_i(T)=\varepsilon_{n+1-i}(\eta(T)),\quad
		\varphi_i(T)=\varphi_{n+1-i}(\eta(T)).
	\end{aligned}
\end{equation}

Then we have the following proposition:
\begin{Prop}\label{prop:infcrystal}
	The crystal structure on $\mathcal T(\infty)$ induces a crystal structure on $\mathcal T'(\infty)$, and the crystal graphs of $\mathcal T(\infty)$ and $\mathcal T'(\infty)$ coincide.
\end{Prop}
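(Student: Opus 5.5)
The plan is to view the structure on $\mathcal T'(\infty)$ as the transport of the structure on $\mathcal T(\infty)$ along the bijection $\eta$, twisted by the Dynkin diagram automorphism $\sigma: i\mapsto n+1-i$ of type $A_n$. Concretely, I would check the axioms (i)--(vii) of Definition~\ref{def:abstract crystal} for $\mathcal T'(\infty)$ one at a time, pulling each back to the corresponding axiom for $\mathcal T(\infty)$, which holds by Theorem~\ref{thm:realizationBinfty}.

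Axioms (iv)--(vii) are immediate. Each of $\tilde e_i,\tilde f_i,\varepsilon_i,\varphi_i$ on $\mathcal T'(\infty)$ is defined as the operator of color $n+1-i$ conjugated by $\eta$. So, for example, $\tilde f_iT=T'$ holds iff $\tilde f_{n+1-i}\eta(T)=\eta(T')$, iff $\eta(T)=\tilde e_{n+1-i}\eta(T')$, iff $T=\tilde e_iT'$. Likewise $\varepsilon_i(\tilde e_iT)=\varepsilon_{n+1-i}(\tilde e_{n+1-i}\eta T)=\varepsilon_i(T)-1$.

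The real content lies in the weight axioms (i)--(iii). The key identity I would establish is
\[
\wt(T)=\sigma\bigl(\wt(\eta(T))\bigr),\qquad \sigma\Bigl(\sum_i c_i\alpha_i\Bigr):=\sum_i c_{n+1-i}\alpha_i .
\]
Given this identity, the remaining axioms follow:
\begin{itemize}
\item (ii) and (iii) follow because $\sigma(\alpha_{n+1-i})=\alpha_i$.
\item (i) follows from $\langle\alpha_i^\vee,\sigma\mu\rangle=\langle\alpha_{n+1-i}^\vee,\mu\rangle$, which holds since $a_{ij}=a_{n+1-i,n+1-j}$.
\end{itemize}

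To prove the identity, I compare coefficients. By \eqref{eq:wt_epsilon_phi}, the coefficient of $\alpha_{n+1-i}$ in $-\wt(\eta T)$ is
\[
\sum_{k=1}^{n+1-i}\sum_{l=n+2-i}^{n+1}y_k^l .
\]
Substituting $l=n+2-m$ with $1\le m\le i$, and using $z_k^m=y_k^{n+2-m}$, this becomes $\sum_{k=1}^{n+1-i}\sum_{m=1}^{i}z_k^m$. This is exactly the coefficient of $\alpha_i$ in $-\wt(T)$ from \eqref{eq:crystal on Yrinfty}, with the indices $j,k$ renamed.

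I expect this index bookkeeping to be the only real obstacle. In particular I need to confirm that the range constraint $2\le i+j\le n+1$ on $z$ matches $l>k$ on $y$ under the substitution.

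Finally, I would address the statement that the crystal graphs coincide. The bijection $\eta$ sends an $i$-arrow $T\to\tilde f_iT$ in $\mathcal T'(\infty)$ to an $(n+1-i)$-arrow in $\mathcal T(\infty)$. So the underlying directed graphs are identical via $\eta$, with edge colors relabeled by $\sigma$. Since $\sigma$ is a Dynkin automorphism, this relabeled graph is again the crystal graph of $\mathcal B(\infty)$. In particular, $\mathcal T'(\infty)\cong\mathcal B(\infty)$, with $\eta^{-1}(T_\infty)$ as the highest weight element.
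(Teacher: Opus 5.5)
Your proposal is correct and takes essentially the same route as the paper. In both, the key step is the identity $\wt(T)=\tau(\wt(\eta T))$ (your $\sigma$ is the paper's $\tau$), obtained by the same substitution $l=n+2-m$; the weight axioms then follow from $\tau(\alpha_{n+1-i})=\alpha_i$, and the remaining axioms are formal consequences of conjugating by $\eta$. Your closing claim that the $\sigma$-relabeled graph is again the crystal graph of $\mathcal B(\infty)$ is a standard fact, coming from the diagram automorphism $f_i\mapsto f_{n+1-i}$ of $U^-$, and the paper likewise takes it for granted rather than proving it.
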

\begin{proof}
	Let \( \tau \) be an involution on the weight lattice \( P \) defined by $\tau(\alpha_i)=\alpha_{n+1-i}$ for all $i\in I$. Then for any \( T \in \mathcal T'(\infty) \), we have
	\begin{equation}\label{eq:wt=tauT}
		\begin{aligned}
			\wt(T)&=-\sum_{i=1}^n(\sum_{j=1}^{n+1-i}\sum_{k=1}^iz_j^k)\alpha_i=-\sum_{i=1}^n(\sum_{j=1}^{n+1-i}\sum_{k=1}^iy_j^{n+2-k})\alpha_i\\
			&=-\sum_{i=1}^n(\sum_{j=1}^{n+1-i}\sum_{l=n+2-i}^{n+1}y_j^{l})\alpha_i=-\sum_{k=1}^n(\sum_{j=1}^{k}\sum_{l=k+1}^{n+1}y_j^{l})\alpha_{n+1-k}=\tau(\wt(\eta T)).
		\end{aligned}	
	\end{equation}
	
	From  \( \eqref{eq:wt=tauT} \), it follows that
	\begin{equation*}
		\begin{aligned}
			\wt(\widetilde{f}_iT)=&\wt(\eta^{-1}(\tilde{f}_{n+1-i}\eta(T)))
			=\tau(\wt(\tilde{f}_{n+1-i}\eta(T)))\\
			=&\tau(\wt(\eta T)-\alpha_{n+1-i})=\tau(\tau(\wt(T))-\alpha_{n+1-i})=\wt(T)-\alpha_i.
		\end{aligned}	
	\end{equation*}	
	
	Thus, condition (iii) of Definition \ref{def:abstract crystal} is satisfied.
	The remaining conditions (i), (ii), and (iv)--(vi) can be verified in a similar manner. Hence, the theorem is established.
\end{proof}

Define $\rho_\infty:\Tinf\to\Tinf$ as follows: for $X\in\Tinf$, choose any word
$X=\f_{i_k}\cdots \f_{i_1}\Tinfty$ and set
\begin{equation}\label{eq:def-rho-infty}
	\rho_\infty(X)\ :=\ \eta\!\Bigl(\,\fp_{i_k}\cdots \fp_{i_1}\,\eta^{-1}(\Tinfty)\Bigr).
\end{equation}
Then $\rho_\infty$ is well-defined (independent of the chosen word), and satisfies
\begin{equation}\label{eq:rho-intertwine}
	\rho_\infty\circ \f_i=\f_{\,n+1-i}\circ \rho_\infty\qquad(\forall i\in I),\qquad
	\rho_\infty(\Tinfty)=\Tinfty.
\end{equation}

In analogy with Proposition \ref{prop:rholambda}, we have the following statement.

\begin{Prop}\label{prop:rhoinfty-via-eta}
	For any $i_1,\dots,i_k\in I$,
	\begin{equation}\label{eq:rho-word}
		\rho_\infty\!\bigl(\f_{i_k}\cdots \f_{i_1}\Tinfty\bigr)
		=\f_{\,n+1-i_k}\cdots \f_{\,n+1-i_1}\Tinfty,
	\end{equation}
	and hence $\rho_\infty$ is an involution.
\end{Prop}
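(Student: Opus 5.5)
Here $\fp_i$ denotes the Kashiwara operator on $\mathcal T'(\infty)$ from \eqref{eq:crystal on Yrinfty}, and we write $\fp_{\mathbf i}:=\fp_{i_k}\cdots\fp_{i_1}$, $\f_{\mathbf i}:=\f_{i_k}\cdots\f_{i_1}$ and $\f_{\bar{\mathbf i}}:=\f_{n+1-i_k}\cdots\f_{n+1-i_1}$. The plan is to unwind definition \eqref{eq:def-rho-infty} using the definition of $\fp_i$ in \eqref{eq:crystal on Yrinfty}, namely $\fp_i=\eta^{-1}\circ\f_{n+1-i}\circ\eta$ on $\mathcal T'(\infty)\cup\{0\}$ (with $\eta(0)=0$).

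First, I record the identity $\eta^{-1}(\Tinfty)\in\mathcal T'(\infty)$: it is just the reversed tableau of $\Tinfty$, with all $z_j^i=0$. Then I prove by induction on $k$ that
\begin{equation*}
\fp_{\mathbf i}\,\eta^{-1}(\Tinfty)=\eta^{-1}\bigl(\f_{\bar{\mathbf i}}\,\Tinfty\bigr).
\end{equation*}
The case $k=0$ is trivial. For the inductive step, write $\mathbf i'=(i_1,\dots,i_{k-1})$ and apply $\fp_{i_k}=\eta^{-1}\f_{n+1-i_k}\eta$ to both sides of the inductive hypothesis; the inner $\eta\eta^{-1}$ cancels. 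One must handle the value $0$: once some intermediate $\f$ returns $0$, everything stays $0$ on both sides, because $\eta^{-1}(0)=0$ and $\f_i 0=0$. In any case, $\mathcal B(\infty)$ has no $0$ values under $\f_i$, so by Theorem~\ref{thm:realizationBinfty} this issue does not arise on either side.

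Applying $\eta$ then gives $\rho_\infty(\f_{\mathbf i}\Tinfty)=\f_{\bar{\mathbf i}}\Tinfty$, which is \eqref{eq:rho-word}. Well-definedness is asserted in the text, but it also follows from this computation together with the fact that $\eta$ is a bijection intertwining $\fp_i$ with $\f_{n+1-i}$. Concretely, if $\f_{\mathbf i}\Tinfty=\f_{\mathbf j}\Tinfty$, the right-hand sides must agree. This holds because, by Proposition~\ref{prop:infcrystal}, $\eta^{-1}$ identifies the crystal graph of $\mathcal T(\infty)$ with itself, twisted by the diagram automorphism $i\mapsto n+1-i$. Hence the set of words reaching a given vertex from $\Tinfty$ is carried to the set of reversed-colour words reaching a single vertex. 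I expect this to be the only delicate point, and I would cite Proposition~\ref{prop:infcrystal} for it.

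For the involution property, let $X=\f_{\mathbf i}\Tinfty$. Then $\rho_\infty(X)=\f_{\bar{\mathbf i}}\Tinfty$, and applying \eqref{eq:rho-word} again with the word $\bar{\mathbf i}$ gives $\rho_\infty(\rho_\infty(X))=\f_{\mathbf i}\Tinfty=X$, since $n+1-(n+1-i)=i$. Every $X\in\Tinf$ has such a word because $\mathcal B(\infty)$ is generated from $u_\infty$ by the $\f_i$ (Theorem~\ref{thm:realizationBinfty}). Hence $\rho_\infty^2=\mathrm{id}$.
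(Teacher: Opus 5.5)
Your proof is correct and essentially the paper's argument. The paper iterates the intertwining relation \eqref{eq:rho-intertwine} together with $\rho_\infty(\Tinfty)=\Tinfty$; your induction obtains exactly this by unwinding $\fp_i=\eta^{-1}\circ\f_{n+1-i}\circ\eta$ in \eqref{eq:def-rho-infty}, and your involution argument spells out the paper's ``hence''.

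One correction to your side remark: well-definedness does not follow from that computation, which only unwinds the definition. It rests on the nontrivial fact that the $\tau$-twisted crystal $\mathcal T'(\infty)$ is isomorphic to $\mathcal B(\infty)$ with $\eta^{-1}(\Tinfty)$ as highest weight element. The paper asserts this rather than proves it, and it is better supported by Theorem~\ref{thm:psiinfty} (or by the automorphism $f_i\mapsto f_{n+1-i}$ of $U^-$) than by the axiom check in Proposition~\ref{prop:infcrystal}.
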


\begin{proof}
	Iterating \eqref{eq:rho-intertwine} and using $\rho_\infty(\Tinfty)=\Tinfty$ yields
	\[
	\rho_\infty\!\bigl(\f_{i_k}\cdots\f_{i_1}\Tinfty\bigr)
	=\f_{\,n+1-i_k}\cdots \f_{\,n+1-i_1}\,\rho_\infty(\Tinfty)
	=\f_{\,n+1-i_k}\cdots \f_{\,n+1-i_1}\Tinfty,
	\]
	which is \eqref{eq:rho-word}.
\end{proof}

\begin{Rmk}
	By \eqref{eq:wt=tauT} one also has $\wt(\rho_\infty(X))=\tau(\wt(X))$ for all $X\in\Tinf$.
	This is compatible with the arrow-intertwining \eqref{eq:rho-intertwine}.
\end{Rmk}

Proposition \ref{prop:rhoinfty-via-eta} demonstrates that the crystal graph of $\mathcal B(\infty)$ exhibits a left-right symmetry; specifically, it remains invariant under reflection across the vertical axis passing through the highest weight vector.

\vskip 1mm

For example, the crystal graph of \(\mathcal B(\infty) \) in type \( A_2 \), shown in Figure \ref{BinftyA2}, remains invariant when reflected across the vertical line passing through the top node, with each arrow label replaced by \( 3 \) minus its original value.
\begin{figure}[H]
	\centering
	\includegraphics[width=1\textwidth]{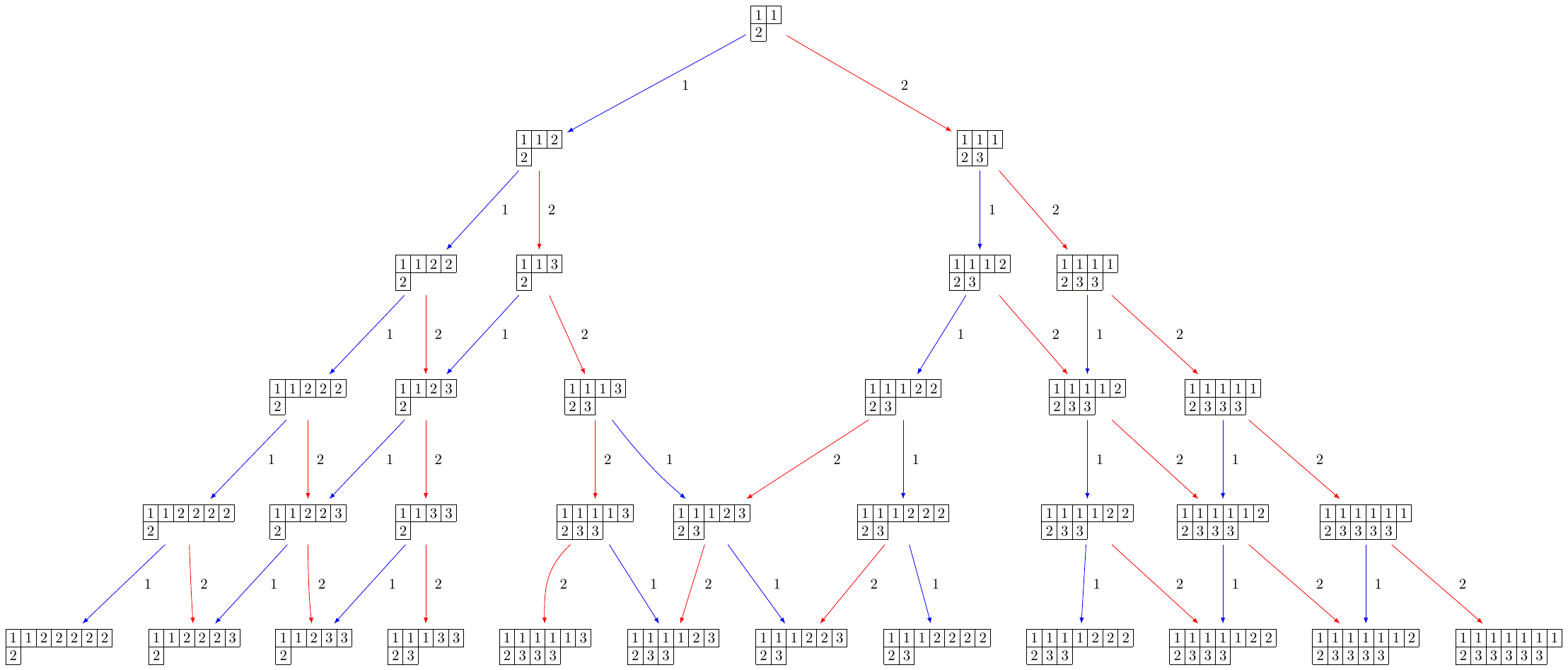}
	\vskip 0.5mm
	\caption{Crystal graph of $\mathcal B(\infty)$ with depth $5$}\label{BinftyA2}
\end{figure}

\subsection{Reverse tableau model for $\mathcal B(\lambda)$}

Let \(\lambda\) be the Young diagram corresponding to a dominant weight \(\lambda \in P^+\).  
We fill the boxes of \(\lambda\) with entries from the set \(\{1, 2, \ldots, n, n+1\}\) such that the entries are weakly decreasing along each row and strictly decreasing down each column.  
A tableau satisfying these conditions is called a \emph{reverse semi-standard Young tableau}.  
Let \(\mathcal{T}'(\lambda)\) denote the set of all reverse semi-standard Young tableaux of shape \(\lambda\).

\vskip 1mm

For any \( T \in \mathcal{T}'(\lambda) \), we define a map $\phi: \mathcal{T}'(\lambda) \to \mathcal{T}(\lambda)$  
by subtracting each entry in \( T \) from \( n+2 \).  
Similarly, we define the inverse map  $\phi^{-1}: \mathcal{T}(\lambda) \to \mathcal{T}'(\lambda)$ 
by subtracting each entry in \( T \in \mathcal{T}(\lambda) \) from \( n+2 \).  
It is straightforward to verify that \(\phi \circ \phi^{-1} = \phi^{-1} \circ \phi = \mathrm{id}\).

\vskip 1mm

For any $T\in \mathcal T'(\lambda)$, we define $\wt: \mathcal T'(\lambda) \rightarrow P$, $\tilde{e}_i,
\tilde{f}_i:\mathcal T'(\lambda) \rightarrow \mathcal T'(\lambda) \cup \{ 0 \}$ and $\varepsilon_i,
\varphi_i :\mathcal T'(\lambda) \rightarrow \mathbb{Z} \cup \{-\infty\}$  $(i \in I)
$ as follows:
\begin{equation}\label{eq:crystal on Yrlambda}
	\begin{aligned}
		&\wt(T)=\sum_{1\leq i\leq n+1}|T|_i\epsilon_{i},  \\
		&\tilde{e}_iT=\phi^{-1}(\tilde{f}_{n+1-i}\phi(T)),\quad\tilde{f}_iT=\phi^{-1}(\tilde{e}_{n+1-i}\phi(T)),\\
		&\varepsilon_i(T)=\varphi_{n+1-i}(\phi(T)),\quad
		\varphi_i(T)=\varepsilon_{n+1-i}(\phi(T)).
	\end{aligned}
\end{equation}

The following proposition demonstrates that the crystal structure on \(\mathcal{T}(\lambda)\) induces a corresponding crystal structure on \(\mathcal{T}'(\lambda)\).

\begin{Prop}\label{prop:crystal-Yprime}
	The set $\mathcal T'(\lambda)$ endowed with the maps in \eqref{eq:crystal on Yrlambda} forms a crystal of type $A_n$.
	Moreover, $\phi:\mathcal T'(\lambda)\to\mathcal T(\lambda)$ is a crystal anti-isomorphism intertwining
	\[
	\tilde e_i\ \longleftrightarrow\ \tilde f_{\,n+1-i},\qquad
	\tilde f_i\ \longleftrightarrow\ \tilde e_{\,n+1-i},\qquad
	\wt\ \longmapsto\ w_0\!\cdot\!\wt\quad(\epsilon_j\mapsto\epsilon_{n+2-j}).
	\]
\end{Prop}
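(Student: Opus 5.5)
Since $\mathcal T(\lambda)$ is already a crystal of type $A_n$ by \cite[Theorem 3.4.2]{KN94}, we transport its structure through $\phi$ and a relabelling of indices, and check the axioms of Definition \ref{def:abstract crystal} directly. The basic fact is the weight identity: let $w_0$ act on $\bigoplus\mathbb Z\epsilon_j$ by $\epsilon_j\mapsto\epsilon_{n+2-j}$. Then $\phi$ replaces each entry $a$ by $n+2-a$, so $|\phi(T)|_{n+2-j}=|T|_j$, and hence $\wt(\phi(T))=w_0\,\wt(T)$. The map $w_0$ sends $\alpha_i=\epsilon_i-\epsilon_{i+1}$ to $\epsilon_{n+2-i}-\epsilon_{n+1-i}=-\alpha_{n+1-i}$. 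Since $w_0$ is an isometry, $\langle\alpha_i^\vee,\wt(T)\rangle=\langle w_0\alpha_i^\vee,w_0\wt(T)\rangle=-\langle\alpha_{n+1-i}^\vee,\wt(\phi T)\rangle$. We also note that $\phi$ maps $\mathcal T'(\lambda)$ bijectively onto $\mathcal T(\lambda)$, because reversing the order of the alphabet exchanges weakly increasing with weakly decreasing and strictly increasing with strictly decreasing. So the operators in \eqref{eq:crystal on Yrlambda} are well defined, with the convention $\phi^{-1}(0)=0$.

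Next we verify the axioms, writing $j=n+1-i$ and $S=\phi(T)$. For (i), $\varphi_i(T)-\varepsilon_i(T)=\varepsilon_j(S)-\varphi_j(S)=-\langle\alpha_j^\vee,\wt S\rangle=\langle\alpha_i^\vee,\wt T\rangle$, using axiom (i) for $\mathcal T(\lambda)$ and the pairing identity above. For (ii), if $\tilde e_iT\neq0$ then $\wt(\tilde e_iT)=w_0^{-1}\wt(\tilde f_jS)=w_0^{-1}(\wt S-\alpha_j)=\wt T+\alpha_i$, because $w_0^{-1}\alpha_j=-\alpha_i$. Axiom (iii) is symmetric to (ii). For (iv), $\varepsilon_i(\tilde e_iT)=\varphi_j(\tilde f_jS)=\varphi_j(S)-1=\varepsilon_i(T)-1$, and the $\varphi$ part and axiom (v) follow in the same way. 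Axiom (vi) follows from axiom (vi) in $\mathcal T(\lambda)$: $\tilde f_iT=T'$ iff $\tilde e_j\phi T=\phi T'$ iff $\tilde f_j\phi T'=\phi T$ iff $\tilde e_iT'=T$. Axiom (vii) is vacuous, because all $\varphi_i$ are finite.

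The anti-isomorphism assertion then holds by construction. Composing the definitions with $\phi$ gives $\phi\circ\tilde e_i=\tilde f_{n+1-i}\circ\phi$, $\phi\circ\tilde f_i=\tilde e_{n+1-i}\circ\phi$ and $\wt\circ\phi=w_0\circ\wt$, together with the exchange of $\varepsilon$ and $\varphi$. Finally, $\phi$ is bijective.

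The only step needing real care is the sign bookkeeping: $w_0$ sends $\alpha_i$ to $-\alpha_{n+1-i}$ rather than to $\alpha_{n+1-i}$. This sign is exactly why $\tilde e$ and $\tilde f$ must be swapped, and why $\varepsilon$ and $\varphi$ must be swapped. It is also why the relation here differs from the $\mathcal T'(\infty)$ case in Proposition \ref{prop:infcrystal}, where $\tau$ carries no sign. Once the identity $\langle\alpha_i^\vee,\wt T\rangle=-\langle\alpha_{n+1-i}^\vee,\wt\phi T\rangle$ is in hand, every axiom reduces to the corresponding one in $\mathcal T(\lambda)$.
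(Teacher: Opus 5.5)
Your proposal is correct and follows the same route as the paper: transport the structure of $\mathcal T(\lambda)$ through $\phi$ and check axioms (i)--(vii) one at a time, reducing each to the corresponding axiom on $\mathcal T(\lambda)$. The only cosmetic difference is that you handle the weight bookkeeping through the linear map $w_0$ and the identity $w_0\alpha_i=-\alpha_{n+1-i}$, whereas the paper tracks the multiplicities $|T|_j=|\phi(T)|_{n+2-j}$ directly.
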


\begin{proof}
	We verify the crystal axioms in Definition~\ref{def:abstract crystal}.
	
	\smallskip
	\noindent\textit{(i) The string length identity.}
	Recall $\langle \alpha_i^\vee,\epsilon_j\rangle=\delta_{i,j}-\delta_{i+1,j}$ and, for $T\in\mathcal T'(\lambda)$,
	\(|\phi(T)|_j=|T|_{\,n+2-j}\).
	Using the definitions in \eqref{eq:crystal on Yrlambda} and the identity
	$\varphi_k(\cdot)-\varepsilon_k(\cdot)=\langle \alpha_k^\vee,\wt(\cdot)\rangle$ on $\mathcal T(\lambda)$, we get
	\[
	\begin{aligned}
		\varphi_i(T)-\varepsilon_i(T)
		&=\varepsilon_{\,n+1-i}(\phi(T))-\varphi_{\,n+1-i}(\phi(T))
		=-\bigl(\varphi_{\,n+1-i}(\phi(T))-\varepsilon_{\,n+1-i}(\phi(T))\bigr)\\
		&=-\Big\langle \alpha_{\,n+1-i}^\vee,\ \sum_{j=1}^{n+1}|\phi(T)|_j\,\epsilon_j\Big\rangle
		= -\bigl(|\phi(T)|_{\,n+1-i}-|\phi(T)|_{\,n+2-i}\bigr)\\
		&= |T|_{i}-|T|_{i+1}
		=\Big\langle \alpha_i^\vee,\ \sum_{j=1}^{n+1}|T|_j\,\epsilon_j\Big\rangle
		=\langle \alpha_i^\vee,\wt(T)\rangle.
	\end{aligned}
	\]
	
	\smallskip
	\noindent\textit{(ii) and (iii) Weight change under $\tilde e_i,\tilde f_i$.}
	On $\mathcal T(\lambda)$, the operator $\tilde f_{\,n+1-i}$ changes exactly one letter
	\(n{+}1{-}i\) to \(n{+}2{-}i\) (when defined), hence
	\(|\tilde f_{\,n+1-i}(\phi(T))|_{n+1-i}=|\phi(T)|_{n+1-i}-1\),
	\(|\tilde f_{\,n+1-i}(\phi(T))|_{n+2-i}=|\phi(T)|_{n+2-i}+1\), and all other multiplicities are unchanged.
	Applying $\phi^{-1}$ we obtain
	\[
	\wt(\tilde e_iT)
	=\sum_{j=1}^{n+1}\bigl|\tilde f_{\,n+1-i}(\phi(T))\bigr|_{\,n+2-j}\,\epsilon_j
	=\wt(T)-\epsilon_{i+1}+\epsilon_i
	=\wt(T)+\alpha_i.
	\]
	The statement for $\tilde f_i$ is analogous and gives $\wt(\tilde f_iT)=\wt(T)-\alpha_i$ when $\tilde f_iT\neq0$.
	
	\smallskip
	\noindent\textit{(iv) and (v) Update of $\varepsilon_i,\varphi_i$ along arrows.}
	Using \eqref{eq:crystal on Yrlambda} and the corresponding rule on $\mathcal T(\lambda)$,
	\[
	\varepsilon_i(\tilde e_iT)
	=\varphi_{\,n+1-i}\!\bigl(\tilde f_{\,n+1-i}\phi(T)\bigr)
	=\varphi_{\,n+1-i}(\phi(T))-1
	=\varepsilon_i(T)-1,
	\]
	and
	\[
	\varphi_i(\tilde e_iT)
	=\varepsilon_{\,n+1-i}\!\bigl(\tilde f_{\,n+1-i}\phi(T)\bigr)
	=\varepsilon_{\,n+1-i}(\phi(T))+1
	=\varphi_i(T)+1.
	\]
	The relations for $\tilde f_i$ are proved in the same way.
	
	\smallskip
	\noindent\textit{(vi) Mutual inverses on their domains.}
	If $\tilde e_iT=\phi^{-1}\!\bigl(\tilde f_{\,n+1-i}\phi(T)\bigr)$, then
	\[
	\tilde f_i(\tilde e_iT)
	=\phi^{-1}\!\Bigl(\tilde e_{\,n+1-i}\phi\bigl(\phi^{-1}(\tilde f_{\,n+1-i}\phi(T))\bigr)\Bigr)
	=\phi^{-1}\!\bigl(\tilde e_{\,n+1-i}\tilde f_{\,n+1-i}\phi(T)\bigr)=T,
	\]
	and the converse is identical.
	
	\smallskip
	\noindent\textit{(vii) The $-\infty$ clause.}
	For highest weight crystals of type $A_n$, $\varepsilon_i,\varphi_i$ take values in $\Bbb Z_{\ge0}$ on
	$\mathcal T'(\lambda)$, so the clause about the value $-\infty$ is vacuous.
	
	\smallskip
	All axioms are satisfied, hence $\mathcal T'(\lambda)$ is an $A_n$-crystal with the stated structure.
\end{proof}

\begin{Ex}\label{ex:reverse}
Let $n=7$, and let  $\lambda=6\epsilon_1+4\epsilon_2+4\epsilon_3+2\epsilon_4+\epsilon_5+\epsilon_6$.
Figure \ref{fig:actione3} below illustrates  the action of $\tilde{e}_3$ on the reverse semi-standard Young tableau $T$.
\begin{figure}[H] 
	\begin{tikzpicture}[scale=0.5]
\begin{scope}[shift={(0,0)}]	
	\draw (0,0)--(6,0);
	\draw (0,-1)--(6,-1);
	\draw (0,-2)--(4,-2);
	\draw (0,-3)--(4,-3);
	\draw (0,-4)--(2,-4);
	\draw (0,-5)--(1,-5);
	\draw (0,-6)--(1,-6);
\draw (0,0)--(0,-6);
\draw (1,0)--(1,-6);
\draw (2,0)--(2,-4);
\draw (3,0)--(3,-3);
\draw (4,0)--(4,-3);
\draw (5,0)--(5,-1);
\draw (6,0)--(6,-1);
\node at (0.5,-0.5){$8$};
\node at (1.5, -0.5){$7$};
\node at (2.5, -0.5){$7$};
\node at (3.5, -0.5){$5$};
\node at (4.5, -0.5){$3$};
\node at (5.5, -0.5){$1$};
\node at (0.5,-1.5){$7$};
\node at (1.5, -1.5){$6$};
\node at (2.5, -1.5){$4$};
\node at (3.5, -1.5){$4$};
\node at (0.5,-2.5){$5$};
\node at (1.5, -2.5){$4$};
\node at (2.5, -2.5){$3$};
\node at (3.5, -2.5){$2$};
\node at (0.5,-3.5){$4$};
\node at (1.5, -3.5){$2$};
\node at (0.5,-4.5){$2$};
\node at (0.5,-5.5){$1$};
\node at (2,-6.5){$T$};

\draw[thick,->] (6.5,-3)--(8.5,-3);
\node at (7.5,-2.5){$\phi$};

\draw[thick,->] (2,-7.5)--(2,-9.5);
\node at (2.5,-8.5){$\tilde{e}_3$};
\end{scope}	

\begin{scope}[shift={(9,0)}]	
	\draw (0,0)--(6,0);
	\draw (0,-1)--(6,-1);
	\draw (0,-2)--(4,-2);
	\draw (0,-3)--(4,-3);
	\draw (0,-4)--(2,-4);
	\draw (0,-5)--(1,-5);
	\draw (0,-6)--(1,-6);
\draw (0,0)--(0,-6);
\draw (1,0)--(1,-6);
\draw (2,0)--(2,-4);
\draw (3,0)--(3,-3);
\draw (4,0)--(4,-3);
\draw (5,0)--(5,-1);
\draw (6,0)--(6,-1);
\node at (0.5,-0.5){$1$};
\node at (1.5, -0.5){$2$};
\node at (2.5, -0.5){$2$};
\node at (3.5, -0.5){$4$};
\node at (4.5, -0.5){$6$};
\node at (5.5, -0.5){$8$};
\node at (0.5,-1.5){$2$};
\node at (1.5, -1.5){$3$};
\node at (2.5, -1.5){$5$};
\node at (3.5, -1.5){$5$};
\node at (0.5,-2.5){$4$};
\node at (1.5, -2.5){$5$};
\node at (2.5, -2.5){$6$};
\node at (3.5, -2.5){$7$};
\node at (0.5,-3.5){$5$};
\node at (1.5, -3.5){$7$};
\node at (0.5,-4.5){$7$};
\node at (0.5,-5.5){$8$};
\node at (2,-6.5){$\phi(T)$};
\draw[thick,->] (2,-7.5)--(2,-9.5);
\node at (2.5,-8.5){$\tilde{f}_5$};
\end{scope}

\begin{scope}[shift={(0,-10)}]	
	\draw (0,0)--(6,0);
	\draw (0,-1)--(6,-1);
	\draw (0,-2)--(4,-2);
	\draw (0,-3)--(4,-3);
	\draw (0,-4)--(2,-4);
	\draw (0,-5)--(1,-5);
	\draw (0,-6)--(1,-6);
\draw (0,0)--(0,-6);
\draw (1,0)--(1,-6);
\draw (2,0)--(2,-4);
\draw (3,0)--(3,-3);
\draw (4,0)--(4,-3);
\draw (5,0)--(5,-1);
\draw (6,0)--(6,-1);
\node at (0.5,-0.5){$8$};
\node at (1.5, -0.5){$7$};
\node at (2.5, -0.5){$7$};
\node at (3.5, -0.5){$5$};
\node at (4.5, -0.5){$3$};
\node at (5.5, -0.5){$1$};
\node at (0.5,-1.5){$7$};
\node at (1.5, -1.5){$6$};
\node at (2.5, -1.5){$4$};
\node at (3.5, -1.5){$3$};
\node at (0.5,-2.5){$5$};
\node at (1.5, -2.5){$4$};
\node at (2.5, -2.5){$3$};
\node at (3.5, -2.5){$2$};
\node at (0.5,-3.5){$4$};
\node at (1.5, -3.5){$2$};
\node at (0.5,-4.5){$2$};
\node at (0.5,-5.5){$1$};
\node at (2,-6.7){$\tilde{e}_3T$};

\draw[thick,<-] (6.5,-3)--(8.5,-3);
\node at (7.5,-2.5){$\phi^{-1}$};
\end{scope}

\begin{scope}[shift={(9,-10)}]	
	\draw (0,0)--(6,0);
	\draw (0,-1)--(6,-1);
	\draw (0,-2)--(4,-2);
	\draw (0,-3)--(4,-3);
	\draw (0,-4)--(2,-4);
	\draw (0,-5)--(1,-5);
	\draw (0,-6)--(1,-6);
\draw (0,0)--(0,-6);
\draw (1,0)--(1,-6);
\draw (2,0)--(2,-4);
\draw (3,0)--(3,-3);
\draw (4,0)--(4,-3);
\draw (5,0)--(5,-1);
\draw (6,0)--(6,-1);
\node at (0.5,-0.5){$1$};
\node at (1.5, -0.5){$2$};
\node at (2.5, -0.5){$2$};
\node at (3.5, -0.5){$4$};
\node at (4.5, -0.5){$6$};
\node at (5.5, -0.5){$8$};
\node at (0.5,-1.5){$2$};
\node at (1.5, -1.5){$3$};
\node at (2.5, -1.5){$5$};
\node at (3.5, -1.5){$6$};
\node at (0.5,-2.5){$4$};
\node at (1.5, -2.5){$5$};
\node at (2.5, -2.5){$6$};
\node at (3.5, -2.5){$7$};
\node at (0.5,-3.5){$5$};
\node at (1.5, -3.5){$7$};
\node at (0.5,-4.5){$7$};
\node at (0.5,-5.5){$8$};
\node at (2,-6.7){$\tilde{f}_5\phi(T)$};
\end{scope}		
	\end{tikzpicture}
	\vskip -3mm
	\caption{The action of $\tilde{e}_3$}\label{fig:actione3}
\end{figure} 
\end{Ex}

\begin{Lem}\label{lem:highest-lowest}
	Let $T^\lambda\in\mathcal T(\lambda)$ be the tableau whose $k$-th row is filled with $k$, and let
	$T_\lambda\in\mathcal T(\lambda)$ be the tableau whose $k$-th column, read from bottom to top, is
	$n+1,n,\dots,n+2-|\mathrm{col}_k(\lambda)|$. Then $T^\lambda$ is the highest weight element and
	$T_\lambda$ is the lowest weight element of the crystal $\mathcal T(\lambda)$, i.e.
	\[
	\tilde e_i T^\lambda=0\quad\text{and}\quad \tilde f_i T_\lambda=0\qquad(\forall\,i\in I).
	\]
\end{Lem}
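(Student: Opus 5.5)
We argue directly from the signature rule defining $\tilde e_i,\tilde f_i$ on $\mathcal T(\lambda)$: $\tilde e_iT=0$ exactly when the $i$-signature of $T$ contains no $-$, and $\tilde f_iT=0$ exactly when it contains no $+$. Since bracketing removes only $(+,-)$ pairs, it suffices to show the following. For $T^\lambda$, in the (Far-Eastern) reading word every letter $i+1$ is preceded by a matching letter $i$ that cancels it. For $T_\lambda$, every letter $i$ is followed by a letter $i+1$ that cancels it.

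\emph{Highest weight.} In $T^\lambda$ the letter $i+1$ occurs only in row $i+1$, and the letter $i$ occurs only in row $i$. Hence each column of height $\ge i+1$ contains the vertical domino $i$ over $i+1$. The Far-Eastern reading reads columns from right to left, each column from top to bottom. So the word is a concatenation of column words $12\cdots h_k$, and within each column containing $i+1$, the $i$ appears immediately before it in signature terms. Each column thus contributes either nothing, a single $+$, or the pair $(+,-)$, which cancels. No $-$ survives, so $\tilde e_iT^\lambda=0$ for all $i$. Equivalently, $\varepsilon_i(T^\lambda)=0$, consistent with $\wt(T^\lambda)=\lambda$.

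\emph{Lowest weight.} Column $k$ of $T_\lambda$, of height $h_k=|\mathrm{col}_k(\lambda)|$, reads top to bottom as $n+2-h_k,\dots,n,n+1$, a consecutive interval ending at $n+1$. If $i$ occurs in such a column, then $i\le n$, so $i+1$ occurs directly below it. The column word therefore contributes either nothing, a single $-$ (when the column starts at $i+1$), or the adjacent pair $(+,-)$, which cancels. After cancellation only $-$'s remain, so no $+$ survives and $\tilde f_iT_\lambda=0$.

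One should first check that $T_\lambda$ is semistandard. Columns are strictly increasing by construction. For rows, note that $h_k$ is weakly decreasing in $k$. The entry in row $r$ of column $k$ is $n+1-h_k+r$, which is weakly increasing in $k$.

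The main point to get right is the reading convention: the cancellation argument needs the column-by-column reading. With the Far-Eastern reading this holds, because each column is read consecutively. The only care needed is to confirm that no signs from other columns intervene to change the conclusion. They cannot: every column contributes a sign string of the form "$-$'s then $+$'s" with no unmatched sign of the forbidden type, so the concatenation of such strings also has none of that type surviving.
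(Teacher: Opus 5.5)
Your proof is correct. It uses the same signature-rule idea as the paper, pairing each $i$ with an $i+1$ directly below it, but it runs the argument through a different reading word.

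\textbf{The paper's route.} The paper's proof calls its reading Far-Eastern, but it actually reads each row right to left, rows top to bottom; this is the Middle-Eastern reading, which gives the same crystal. With that word the two cases are handled as follows.
\begin{itemize}
\item For $T^\lambda$ it counts rows: all $i$'s of row $i$ are read before all $(i+1)$'s of row $i+1$, and $\lambda_i\ge\lambda_{i+1}$.
\item For $T_\lambda$ it pairs each $i$ with the $i+1$ directly below it. That $i+1$ is read later but not adjacently. So this step quietly relies on the fact that an injection from $+$'s to strictly later $-$'s forces every $+$ to cancel.
\end{itemize}

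\textbf{Your route.} You use the genuine column reading. This is the convention the paper actually fixes when it defines the Kashiwara operators on $\mathcal T(\infty)$, and its worked example confirms it. With this word, in both cases the relevant $i$ and $i+1$ form a vertical domino that is adjacent in the $i$-signature. Every cancellation therefore happens inside a single column, and the order-independence of bracketing finishes the argument.

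\textbf{Comparison.} Your version is slightly more self-contained: it never needs the non-adjacent matching fact that the paper leaves implicit. You also check that $T_\lambda$ is semistandard (the row-$r$ entry of column $k$ is $n+1-h_k+r$, weakly increasing in $k$), which the paper takes for granted. The paper's row-count argument for $T^\lambda$, on the other hand, reads off $\varphi_i(T^\lambda)=\lambda_i-\lambda_{i+1}$ directly. Your column argument gets the same number only after counting the columns of height exactly $i$.
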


\begin{proof}
	We use the standard Kashiwara--Nakashima (KN) reading: scan each row from right to left, and the rows from
	top to bottom (i.e. Far-Eastering reading). For the $i$-signature, write ``$+$'' for each entry $i$ and ``$-$'' for each entry $i{+}1$;
	then cancel all adjacent ``$(+, -)$'' pairs. After cancellation, the numbers of residual ``$-$'' and ``$+$'' are
	$\varepsilon_i$ and $\varphi_i$, respectively. Recall that $\tilde e_iT=0$ iff $\varepsilon_i(T)=0$, and
	$\tilde f_iT=0$ iff $\varphi_i(T)=0$.
	
	\smallskip
	\emph{Highest weight.} In $T^\lambda$, every $i$ appears exactly in row $i$, and every $i{+}1$ appears exactly
	in row $i{+}1$. Since we read the $i$-th row entirely before the $(i{+}1)$-st row, all ``$+$'' (from $i$) in
	the $i$-signature of $T^\lambda$ occur before all ``$-$'' (from $i{+}1$). Moreover, the length of row $i$ is
	at least that of row $i{+}1$, so every ``$-$'' is paired with a preceding ``$+$''. Hence no ``$-$'' remains,
	$\varepsilon_i(T^\lambda)=0$, and thus $\tilde e_iT^\lambda=0$ for all $i$.
	
	\smallskip
	\emph{Lowest weight.} In $T_\lambda$, each column is a consecutive string ending at $n{+}1$ and strictly
	increasing from top to bottom. Fix $i\in I$. If a column contains an $i$, then, by construction, it also
	contains an $(i{+}1)$ strictly \emph{below} that $i$. In the KN reading word, the $i$ from this column
	(contributing a ``$+$'') is encountered \emph{before} the $(i{+}1)$ from the same column (contributing a
	``$-$''). Therefore we can pair every such ``$+$'' with a later ``$-$'' in the same column, and these pairs are
	precisely of the canceling type ``$(+, -)$''. Consequently, after cancellation no ``$+$'' remains, i.e.\
	$\varphi_i(T_\lambda)=0$, and hence $\tilde f_iT_\lambda=0$ for all $i$.
	
	Thus $T^\lambda$ and $T_\lambda$ are the highest and lowest weight elements of $\mathcal T(\lambda)$, respectively.
\end{proof}

\begin{Ex}
Let us consider the values of \( n \) and \( \lambda \) as specified in Example \ref{ex:reverse}. In this context, the tableaux \( T^\lambda \) and \( T_\lambda \) are given in  Figure \ref{figTlambda}:
\begin{figure}[H] 
	\begin{tikzpicture}[scale=0.5]
	\begin{scope}[shift={(0,0)}]	
	\draw (0,0)--(6,0);
	\draw (0,-1)--(6,-1);
	\draw (0,-2)--(4,-2);
	\draw (0,-3)--(4,-3);
	\draw (0,-4)--(2,-4);
	\draw (0,-5)--(1,-5);
	\draw (0,-6)--(1,-6);
	\draw (0,0)--(0,-6);
	\draw (1,0)--(1,-6);
	\draw (2,0)--(2,-4);
	\draw (3,0)--(3,-3);
	\draw (4,0)--(4,-3);
	\draw (5,0)--(5,-1);
	\draw (6,0)--(6,-1);
	\node at (0.5,-0.5){$1$};
	\node at (1.5, -0.5){$1$};
	\node at (2.5, -0.5){$1$};
	\node at (3.5, -0.5){$1$};
	\node at (4.5, -0.5){$1$};
	\node at (5.5, -0.5){$1$};
	\node at (0.5,-1.5){$2$};
	\node at (1.5, -1.5){$2$};
	\node at (2.5, -1.5){$2$};
	\node at (3.5, -1.5){$2$};
	\node at (0.5,-2.5){$3$};
	\node at (1.5, -2.5){$3$};
	\node at (2.5, -2.5){$3$};
	\node at (3.5, -2.5){$3$};
	\node at (0.5,-3.5){$4$};
	\node at (1.5, -3.5){$4$};
	\node at (0.5,-4.5){$5$};
	\node at (0.5,-5.5){$6$};
	\node at (2,-6.5){$T^\lambda$};

	\end{scope}	
	
	\begin{scope}[shift={(9,0)}]	
	\draw (0,0)--(6,0);
	\draw (0,-1)--(6,-1);
	\draw (0,-2)--(4,-2);
	\draw (0,-3)--(4,-3);
	\draw (0,-4)--(2,-4);
	\draw (0,-5)--(1,-5);
	\draw (0,-6)--(1,-6);
	\draw (0,0)--(0,-6);
	\draw (1,0)--(1,-6);
	\draw (2,0)--(2,-4);
	\draw (3,0)--(3,-3);
	\draw (4,0)--(4,-3);
	\draw (5,0)--(5,-1);
	\draw (6,0)--(6,-1);
	\node at (0.5,-0.5){$3$};
	\node at (1.5, -0.5){$5$};
	\node at (2.5, -0.5){$6$};
	\node at (3.5, -0.5){$6$};
	\node at (4.5, -0.5){$8$};
	\node at (5.5, -0.5){$8$};
	\node at (0.5,-1.5){$4$};
	\node at (1.5, -1.5){$6$};
	\node at (2.5, -1.5){$7$};
	\node at (3.5, -1.5){$7$};
	\node at (0.5,-2.5){$5$};
	\node at (1.5, -2.5){$7$};
	\node at (2.5, -2.5){$8$};
	\node at (3.5, -2.5){$8$};
	\node at (0.5,-3.5){$6$};
	\node at (1.5, -3.5){$8$};
	\node at (0.5,-4.5){$7$};
	\node at (0.5,-5.5){$8$};
	\node at (2,-6.5){$T_\lambda$};
	\end{scope}		
	\end{tikzpicture}
	\vskip -3mm
	\caption{The tableaux corresponding to highest and lowest weight vectors}\label{figTlambda}
\end{figure} 
\end{Ex}

We have the following proposition:

\begin{Prop}\label{prop:rholambda}
	There exists an involution $\rho_\lambda:\mathcal T(\lambda)\to\mathcal T(\lambda)$ such that
	\[
	\rho_\lambda\!\bigl(\tilde f_{i_k}\cdots\tilde f_{i_1}T^\lambda\bigr)
	=\tilde e_{\,n+1-i_k}\cdots\tilde e_{\,n+1-i_1}T_\lambda .
	\]
\end{Prop}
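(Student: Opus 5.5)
The idea is to build $\rho_\lambda$ by composing $\phi$ with a crystal isomorphism $\mathcal T(\lambda)\to\mathcal T'(\lambda)$. This sidesteps any word-by-word well-definedness check, which would otherwise be needed because the same element can be reached from $T^\lambda$ by many words. We need three preliminary facts, which we check in order.

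\emph{First}, we identify the highest weight element of $\mathcal T'(\lambda)$. By \eqref{eq:crystal on Yrlambda}, $\tilde e_i T'=0$ if and only if $\tilde f_{n+1-i}\phi(T')=0$. By Lemma~\ref{lem:highest-lowest}, the element $\phi^{-1}(T_\lambda)$ is therefore killed by every $\tilde e_i$. Its weight is $w_0\cdot \wt(T_\lambda)$ by Proposition~\ref{prop:crystal-Yprime}. Since $T_\lambda$ is the lowest weight element, $\wt(T_\lambda)=w_0\lambda$, so $\wt(\phi^{-1}(T_\lambda))=\lambda$.

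\emph{Second}, we check that $\mathcal T'(\lambda)$ is connected and normal. The crystal graph of $\mathcal T'(\lambda)$ is the graph of $\mathcal T(\lambda)$ with arrows reversed and colors relabelled by $i\mapsto n+1-i$, and $\mathcal T(\lambda)\cong\mathcal B(\lambda)$ is connected. Hence $\mathcal T'(\lambda)$ is connected. It is also normal ($\varepsilon_i,\varphi_i$ are the $i$-string lengths), because $\varepsilon_i(T')=\varphi_{n+1-i}(\phi T')$ and $\varphi_i(T')=\varepsilon_{n+1-i}(\phi T')$, and these are string lengths in $\mathcal T(\lambda)$.

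\emph{Third}, we obtain the isomorphism. A connected normal crystal with a highest weight element of weight $\lambda$ is isomorphic to $\mathcal B(\lambda)$. So there is a unique crystal isomorphism $\psi:\mathcal T(\lambda)\to\mathcal T'(\lambda)$ with $\psi(T^\lambda)=\phi^{-1}(T_\lambda)$.

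Now set $\rho_\lambda:=\phi\circ\psi$. Since $\psi$ commutes with the $\tilde f_i$ and the $\tilde e_i$, and $\phi\circ\tilde f_i=\tilde e_{n+1-i}\circ\phi$ and $\phi\circ\tilde e_i=\tilde f_{n+1-i}\circ\phi$ on $\mathcal T'(\lambda)$ by \eqref{eq:crystal on Yrlambda}, we get
\[
\rho_\lambda\circ\tilde f_i=\tilde e_{\,n+1-i}\circ\rho_\lambda,\qquad \rho_\lambda\circ\tilde e_i=\tilde f_{\,n+1-i}\circ\rho_\lambda .
\]
Both identities are understood on the domains where the operators are nonzero, and $\rho_\lambda(T^\lambda)=T_\lambda$. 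Iterating the first identity gives the displayed formula of the proposition directly.

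For the involution property, composing the two intertwining relations shows that $\rho_\lambda^2$ commutes with every $\tilde f_i$ and every $\tilde e_i$, and it preserves weights since $w_0^2=1$. Thus $\rho_\lambda^2$ is a crystal automorphism of the connected crystal $\mathcal T(\lambda)$. It must send the unique highest weight element $T^\lambda$ to itself. Because every element has the form $\tilde f_{i_k}\cdots\tilde f_{i_1}T^\lambda$, it follows that $\rho_\lambda^2=\mathrm{id}$.

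The step I expect to need the most care is the second one: confirming that $\mathcal T'(\lambda)$, with its transported structure, really satisfies the hypotheses of the uniqueness theorem for highest weight crystals. Concretely, this means checking normality through the swap of $\varepsilon$ and $\varphi$, and checking that $\phi^{-1}(T_\lambda)$ is the unique element annihilated by all $\tilde e_i$. Both should follow from the corresponding facts for $\mathcal T(\lambda)$ via Proposition~\ref{prop:crystal-Yprime}.
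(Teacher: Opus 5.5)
Your architecture is the paper's, made more explicit. The paper defines $\rho_\lambda(T)=\phi\bigl(\tilde f'_{i_k}\cdots\tilde f'_{i_1}\phi^{-1}(T_\lambda)\bigr)$ word by word. It asserts independence of the word because $(\mathcal T'(\lambda);\tilde e_i',\tilde f_i')$ is a connected highest weight crystal with highest element $\phi^{-1}(T_\lambda)$. Your isomorphism $\psi$ with $\psi(T^\lambda)=\phi^{-1}(T_\lambda)$ is exactly what that assertion needs. Your intertwining, iteration and involutivity arguments are correct and agree with the paper's.

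\textbf{The gap is in your third step.} With ``normal'' in the sense you define it, i.e.\ $\varepsilon_i,\varphi_i$ are $i$-string lengths (seminormal), the uniqueness theorem you invoke is false. Here is a counterexample in type $A_2$. Take $\mathcal B(\omega_1+\omega_2)$, realized by tableaux of shape $(2,1)$, and identify its two weight-zero elements. One of them lies on a $1$-string of length $0$ and a $2$-string of length $2$; the other lies on a $1$-string of length $2$ and a $2$-string of length $0$. The merged object is still a connected seminormal crystal. It has a unique highest weight element, of weight $\omega_1+\omega_2$. But it has $7$ elements instead of $8$, so it is not $\mathcal B(\omega_1+\omega_2)$. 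Therefore your second-step checks cannot by themselves produce $\psi$: connectedness, string lengths, and identifying $\phi^{-1}(T_\lambda)$ as the unique highest weight element of weight $\lambda$. If instead ``normal'' means ``is the crystal of an integrable module'', the theorem is true, but then establishing normality is the whole content, and the string-length check does not do it.

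\textbf{What is missing} is an input that uses that $\mathcal T(\lambda)$ is the crystal of an actual module, and that survives reversing arrows and relabelling colours by $i\mapsto n+1-i$. Any of the following works.
\begin{itemize}
\item Stembridge's local axioms hold for $\mathcal T(\lambda)$. They are symmetric under interchanging $\tilde e$ and $\tilde f$, and invariant under the diagram automorphism. Hence they hold for $\mathcal T'(\lambda)$, and Stembridge's characterization gives $\mathcal T'(\lambda)\cong\mathcal B(\lambda)$. This is what the paper's parenthetical about ``crystal local relations'' points to.
\item Kashiwara's duality $\mathcal B(\lambda)^\vee\cong\mathcal B(-w_0\lambda)$, combined with twisting by the diagram automorphism, which sends $-w_0\lambda$ back to $\lambda$ in type $A_n$.
\item The later Theorem~\ref{thm: psilambda}. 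Its proof does not use this proposition, and it yields $\mathcal T'(\lambda)\cong\Sigma_\iota[\lambda]\cong\mathcal B(\lambda)$.
\end{itemize}
Once $\psi$ is justified in one of these ways, the rest of your argument goes through unchanged.
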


\begin{proof}
	Let $\phi:\mathcal T'(\lambda)\to\mathcal T(\lambda)$ be the bijection from Proposition~\ref{prop:crystal-Yprime}, and
	let $\tilde e_i',\tilde f_i'$ be the crystal operators on $\mathcal T'(\lambda)$ defined by
	\[
	\tilde e_i'=\phi^{-1}\tilde f_{\,n+1-i}\phi,\qquad
	\tilde f_i'=\phi^{-1}\tilde e_{\,n+1-i}\phi .
	\]
	By Lemma~\ref{lem:highest-lowest}, $T^\lambda$ (resp.\ $T_\lambda$) is the highest (resp.\ lowest) element of
	$\mathcal T(\lambda)$. Hence $\tilde f_{\,n+1-i}T_\lambda=0$ for all $i$, so
	$\tilde e_i'\bigl(\phi^{-1}(T_\lambda)\bigr)=\phi^{-1}\tilde f_{\,n+1-i}T_\lambda=0$ for all $i$; that is,
	$\phi^{-1}(T_\lambda)$ is the highest element of the highest weight crystal $\bigl(\mathcal T'(\lambda);\tilde e_i',\tilde f_i'\bigr)$.
	
	\smallskip
	\emph{Well-definedness of $\rho_\lambda$.}
	For $T\in\mathcal T(\lambda)$ choose indices $i_1,\dots,i_k$ with
	$T=\tilde f_{i_k}\cdots\tilde f_{i_1}T^\lambda$ and set
	\begin{equation}\label{eq:rho-def}
		\rho_\lambda(T)\ :=\ \phi\bigl(\tilde f'_{i_k}\cdots\tilde f'_{i_1}\,\phi^{-1}(T_\lambda)\bigr).
	\end{equation}
	Because $\bigl(\mathcal T'(\lambda);\tilde e_i',\tilde f_i'\bigr)$ is a connected highest weight crystal with highest element
	$\phi^{-1}(T_\lambda)$, the right-hand side of \eqref{eq:rho-def} is independent of the chosen factorization of $T$
	(the crystal local relations for type $A_n$ hold equally for the primed operators). Thus $\rho_\lambda$ is well-defined.
	Taking $k=0$ shows $\rho_\lambda(T^\lambda)=T_\lambda$.
	
	\smallskip
	\emph{Intertwining with $\tilde f$.}
	From $\phi\tilde f'_i=\tilde e_{\,n+1-i}\phi$ we get, for any $i$ and any $T$,
	\[
	\rho_\lambda(\tilde f_iT)
	=\phi\!\left(\tilde f'_i\,\tilde f'_{i_k}\cdots\tilde f'_{i_1}\phi^{-1}(T_\lambda)\right)
	=\tilde e_{\,n+1-i}\,\phi\!\left(\tilde f'_{i_k}\cdots\tilde f'_{i_1}\phi^{-1}(T_\lambda)\right)
	=\tilde e_{\,n+1-i}\,\rho_\lambda(T).
	\]
	Iterating this identity yields exactly the displayed formula in the statement:
	\[
	\rho_\lambda\!\bigl(\tilde f_{i_k}\cdots\tilde f_{i_1}T^\lambda\bigr)
	=\tilde e_{\,n+1-i_k}\cdots\tilde e_{\,n+1-i_1}T_\lambda .
	\]
	
	\smallskip
	\emph{Involutivity.}
	Consider $\rho_\lambda^2$. We have $\rho_\lambda(T^\lambda)=T_\lambda$, and by the previous paragraph
	$\rho_\lambda\circ\tilde f_i=\tilde e_{\,n+1-i}\circ\rho_\lambda$. Hence
	\[
	\rho_\lambda^2\circ \tilde f_i
	=\rho_\lambda\circ \tilde e_{\,n+1-i}\circ \rho_\lambda
	=\tilde f_i\circ \rho_\lambda^2\qquad(\forall i),
	\]
and $
\tilde e_{i}\,\rho_\lambda(T_\lambda)=\rho_\lambda(\tilde f_{n+1-i}T_\lambda)=0$.

Hence $\rho_\lambda(T_\lambda)$ is annihilated by all $\tilde e_i$, so it is the highest weight element of the
connected highest weight crystal $\mathcal T(\lambda)$; by uniqueness, $\rho_\lambda(T_\lambda)=T^\lambda$.
Therefore $\rho_\lambda^{\,2}(T^\lambda)=T^{\lambda}$.

Since every element of the connected highest weight crystal
	$\mathcal T(\lambda)$ is of the form $\tilde f_{i_k}\cdots\tilde f_{i_1}T^\lambda$, it follows that
	$\rho_\lambda^2$ fixes all elements; hence $\rho_\lambda^2=\mathrm{id}$. Therefore $\rho_\lambda$ is an involution.
\end{proof}

\begin{Rmk}
The involution \( \rho_\lambda \) defined on \( \mathcal{T}(\lambda) \) coincides with the Schützenberger involution on \( \mathcal{T}(\lambda) \). More precisely, for any \( T \in \mathcal{T}(\lambda) \), let \( (\phi^{-1}(T))^{\#} \) denote the skew Young tableau obtained by rotating \( \phi^{-1}(T) \) by \( 180^\circ \) in the plane. Applying the \emph{jeu de taquin} procedure to \( \phi^{-1}(T)^{\#} \) produces a tableau \( 	
\evac(T):=\jdt\!\bigl((\phi^{-1}(T))^{\#}\bigr)
 \), which corresponds to \( \rho_\lambda(T) \) (cf.~\cite[Proposition~2.87]{Shi05}).
\end{Rmk}

\begin{Ex}
Fix $n=3$. The left graph in the following Figure \ref{graphwith180} illustrates the crystal graph of
$\mathcal B(\lambda)$
with highest weight $\lambda=2\epsilon_1+\epsilon_2$. The right graph is obtained by rotating the left graph by $180^\circ$.
\begin{figure}[H]
	\begin{minipage}[t]{0.51\linewidth}
		\centering
		\includegraphics[width=0.76\textwidth]{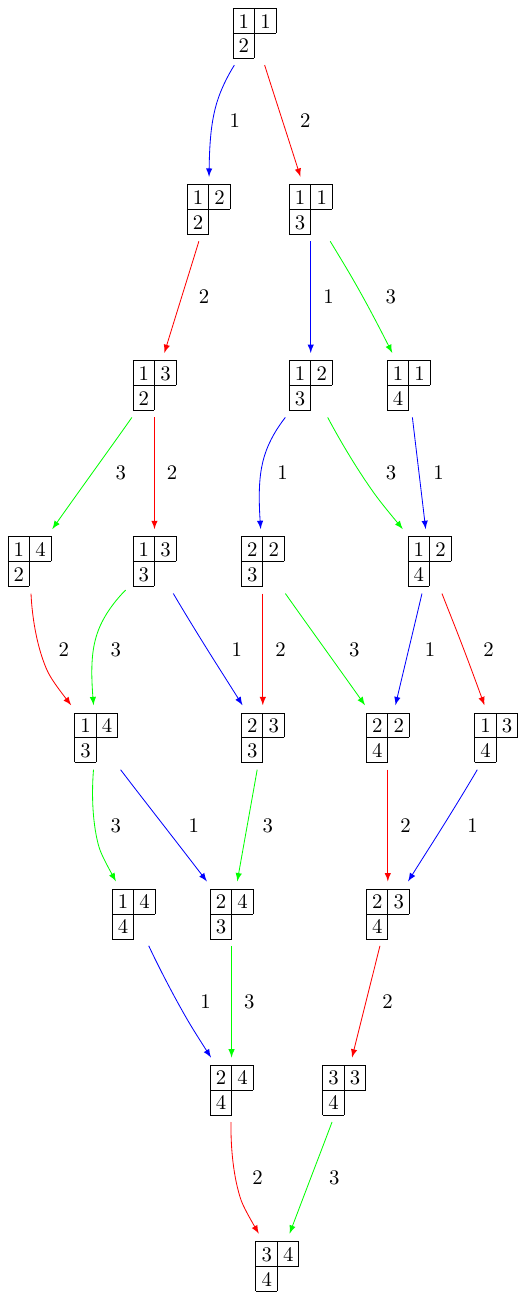}
	\end{minipage}%
	\begin{minipage}[t]{0.51\linewidth}
		\centering
	\includegraphics[width=0.76\textwidth]{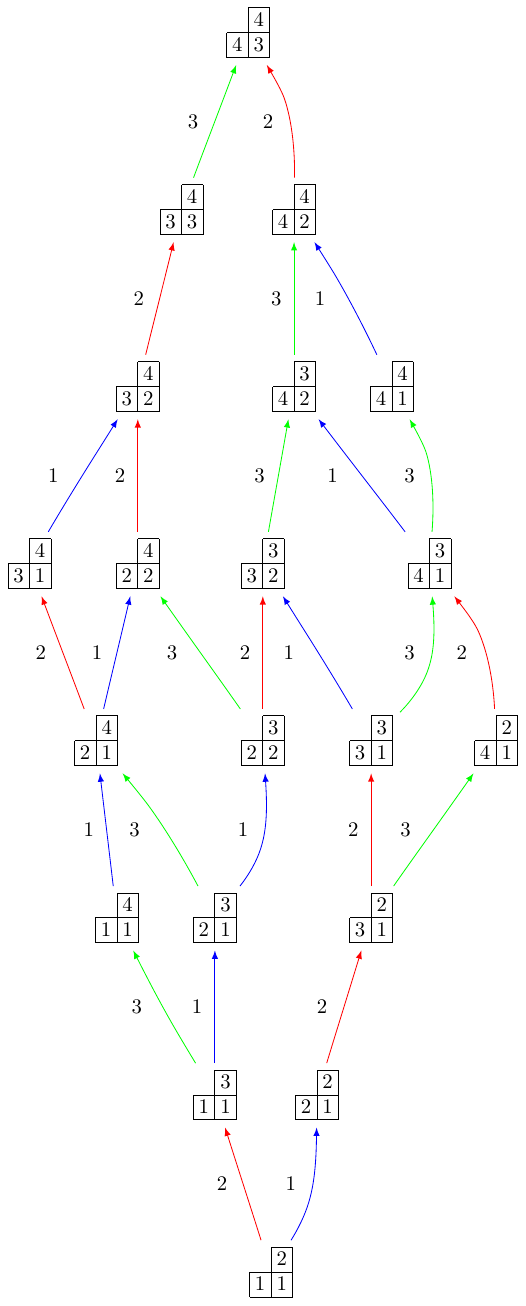}
	\end{minipage}
\vskip 1mm
\caption{The crystal graph and the crystal graph obtained by rotation}\label{graphwith180}
\end{figure}

In the directed graph on the right, we perform the following sequence of transformations:
\begin{enumerate}
	\item Reverse the direction of each arrow;
	\item Relabel each arrow by replacing its label with \( 4 \) minus its original value;
	\item Replace each entry in the rotated Young tableaux with \( 5 \) minus its original value, thereby producing skew Young tableaux;
	\item Apply the jeu de taquin procedure to each skew tableau to obtain a semi-standard Young tableau.
\end{enumerate}
Upon completing steps (1)--(4), we recover the crystal graph shown on the left.
\end{Ex}

\section{
Young tableau description for the polyhedral realizations of 	$\mathcal B(\infty)$ and $\mathcal B(\lambda)$}\label{sec:tableau_polyhedral_Binfty_Blambda}

In this section, the combinatorics of Young tableaux is used to give an explicit combinatorial description of the polyhedral realizations of $\mathcal B(\infty)$ and $\mathcal B(\lambda)$.
Here, we review only the explicit polyhedral realizations for $\mathcal B(\infty)$ and $\mathcal B(\lambda)$ in type $A_n$; the general case is presented in Appendix~\ref{sec:poly_general}.

\vskip 2mm

We choose a periodic sequence $\iota$ as follows:
\begin{equation}\label{eq:periodic sequence}
	\iota=\cdots,n,\cdots,2,1,\cdots,n,\cdots,2,1,\cdots,n,\cdots,2,1.
\end{equation}

There is a bijection
$\mathbb Z_{\geq 1}\times I\to \mathbb Z_{\geq 1}$, which is given by
$(j,i)\mapsto (j-1)n+i$. Therefore, we can identify 
$x_k\in\mathbb Z$ $(k>1)$ with $x_{(j-1)n+i}$. For convenience, we define $x_j^{(i)}:=x_{(j-1)n+i}$ for $j\geq 1$ and $i\in I$.

\begin{Thm}\cite[Theorem 5.1]{NZ97}\label{thm:polyhedral Binfty A}
	The crystal $\mathcal B(\infty)$ is realized as the following set 	
	\begin{equation}\label{eq:poly_Binfty}
		\Sigma_\iota=
		\left\{
		\overrightarrow{x}=(x_j^{(i)})_{i,j\ge 1}\ \middle|\ 
		\begin{array}{l}
			x_1^{(i)}\ge x_2^{(i-1)}\ge \cdots \ge x_i^{(1)}\ge 0,\quad 1\le i\le n,\\[2pt]
			x_j^{(i)}=0,\quad i+j>n+1
		\end{array}
		\right\},
	\end{equation}
	with crystal data $\wt, \tilde f_i, \tilde e_i,\varepsilon_i, \varphi_i$  given in \eqref{eq:crystal operator on Zinfty}.
\end{Thm}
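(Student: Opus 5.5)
The plan is to derive the statement from the general Nakashima--Zelevinsky machinery recalled in Appendix~\ref{sec:poly_general}. That machinery embeds $\mathcal B(\infty)$ into $\mathbb Z^{\infty}_\iota\otimes\cdots$ via the Kashiwara embedding $\Psi_\iota$. Its image equals $\Sigma_\iota=\{\vec x: \beta(\vec x)\ge 0 \text{ for all } \beta\in\Xi_\iota\}$ whenever $\iota$ satisfies the \emph{positivity condition}. Here $\Xi_\iota$ is the set of linear forms generated from the coordinate forms $x_k$ by the piecewise-linear operators $S_k$.

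\textbf{Step 1: the positivity condition.} For the periodic sequence \eqref{eq:periodic sequence}, the index $i$ occurs in each block $n,\dots,2,1$. Consecutive occurrences of $i$ are separated only by neighbours $i\pm1$ in the prescribed pattern. I would check that for $k$ with $\iota_k=i$, the next occurrence $k^{(+)}$ exists, and that the interval between them contains exactly the adjacent indices $i+1$ (in the same block) and $i-1$ (in the next block). This is what positivity needs.

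\textbf{Step 2: compute $\Xi_\iota$.} Starting from $x_j^{(i)}$, I would use the explicit formula
\[S_k(\beta)=\beta-\beta_k\Bigl(x_k+\sum_{k<s<k^{(+)}}a_{\iota_s\iota_k}x_s+x_{k^{(+)}}\Bigr).\]
With the identification $x_j^{(i)}=x_{(j-1)n+i}$, I expect the operator to transform $x_j^{(i)}$ into the difference form $x_{j}^{(i)}-x_{j+1}^{(i-1)}$ (and symmetric variants). Iterating, I would prove by induction on the number of steps that $\Xi_\iota$ consists exactly of:
\begin{itemize}
\item[(a)] the forms $x_j^{(i)}-x_{j+1}^{(i-1)}$ for $i\ge2$;
\item[(b)] the forms $x_j^{(1)}$;
\item[(c)] the forms $-x_j^{(i)}$ for $i+j>n+1$.
\end{itemize}
The key claim is that $\Xi_\iota$ is closed under all $S_k$, so no further forms appear.

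\textbf{Step 3: read off the inequalities.} The forms (a) and (b) give the chains $x_1^{(i)}\ge x_2^{(i-1)}\ge\cdots\ge x_i^{(1)}\ge0$. The forms in (c), combined with nonnegativity that follows from the chains, force $x_j^{(i)}=0$ for $i+j>n+1$. The crystal data \eqref{eq:crystal operator on Zinfty} are then inherited from the general structure on $\mathbb Z^\infty_\iota$.

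\textbf{Main obstacle.} The main difficulty is Step 2, specifically the closure computation near the boundary $i+j=n+1$. There the forms involve vanishing coordinates, and $S_k$ may generate $-x$ terms. I would handle it by a careful case split on whether $i=1$, $i=n$, or $1<i<n$, tracking which neighbours lie inside the truncated range. Alternatively, one can bypass the computation by citing \cite[Theorem 5.1]{NZ97} directly, where this type $A_n$ computation is carried out.
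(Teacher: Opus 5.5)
The paper does not prove this statement; it is quoted from \cite[Theorem 5.1]{NZ97}, so your closing fallback is exactly what the paper does. The derivation you sketch in Steps 1--2, however, would not go through as written.

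\textbf{Where Steps 1--2 fail.} Index $\beta_k,S_k$ by the pair $(j,i)$ with $k=(j-1)n+i$, and put $x_j^{(0)}=x_j^{(n+1)}=0$. The only neighbours of $i$ strictly between $k$ and $k^{(+)}=k+n$ sit at $(j,i+1)$ and $(j+1,i-1)$. Hence $\beta_{(j,i)}=x_j^{(i)}+x_{j+1}^{(i)}-x_j^{(i+1)}-x_{j+1}^{(i-1)}$.
\begin{itemize}
\item Consequently $S_{(j,i)}x_j^{(i)}=x_j^{(i+1)}+x_{j+1}^{(i-1)}-x_{j+1}^{(i)}$, not the difference form $x_j^{(i)}-x_{j+1}^{(i-1)}$ you expect.
\item $\Xi_\iota$ contains every coordinate form (the case $l=0$ of its definition). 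So it contains $x_j^{(i)}$ for $i\ge2$ and three-term forms like the one above. It is therefore not ``exactly'' (a)$\cup$(b)$\cup$(c).
\item That family is not $S$-stable either. Your displayed formula for $S_k$ omits the branch $\varphi_k\le0$, which subtracts $\varphi_k\beta_{k^{(-)}}$. With it, $S_{(j,i)}(-x_j^{(i)})=x_{j-1}^{(i)}-x_{j-1}^{(i+1)}-x_j^{(i-1)}$ for $j\ge2$, $i<n$, which is not in your list.
\item Step 1 misstates positivity. The condition is that $\varphi_k\ge0$ for every $\varphi\in\Xi_\iota$ and every $k$ with $k^{(-)}=0$; here these are the coordinates $x_1^{(i)}$. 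This is a property of $\Xi_\iota$ and can only be checked after Step 2. The spacing of letters in $\iota$ only gives you the formula for $\beta_k$.
\end{itemize}

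\textbf{The missing idea.} Work with the forms $L_j^{(i)}:=x_j^{(i)}-x_{j+1}^{(i-1)}$ for $j\ge1$, $1\le i\le n+1$, and with the cone they span, rather than with an exact list. Note $L_j^{(1)}=x_j^{(1)}$ and $L_j^{(n+1)}=-x_{j+1}^{(n)}$.
\begin{itemize}
\item One has $\beta_{(j,i)}=L_j^{(i)}-L_j^{(i+1)}$. The coordinate $x_j^{(i)}$ occurs only in $L_j^{(i)}$ (coefficient $+1$) and in $L_{j-1}^{(i+1)}$ (coefficient $-1$).
\item For $\varphi=\sum c_j^{(i)}L_j^{(i)}$ with all $c_j^{(i)}\ge0$, the coefficient of $x_j^{(i)}$ is therefore $c_j^{(i)}-c_{j-1}^{(i+1)}$, with $c_0^{(\cdot)}=0$. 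In either branch of $S_{(j,i)}$ the coefficient that decreases becomes $c_{j-1}^{(i+1)}$, resp.\ $c_j^{(i)}$. So the nonnegative cone is $S$-stable.
\item In particular $S_{(j,i)}$ sends $L_j^{(i)}\mapsto L_j^{(i+1)}$ and $L_{j-1}^{(i+1)}\mapsto L_{j-1}^{(i)}$. Hence every $L_j^{(i)}$ lies in $\Xi_\iota$, being in the orbit of $x_j^{(1)}$.
\item Since $x_j^{(i)}=\sum_{m=0}^{i-1}L_{j+m}^{(i-m)}$, all of $\Xi_\iota$ lies in the cone. Therefore $\Sigma_\iota$ is cut out by $L_j^{(i)}\ge0$.
\item Positivity then holds, because no $L_j^{(i)}$ has a negative coefficient at any $x_1^{(i')}$.
\item Finally, $-x_j^{(n)}=L_{j-1}^{(n+1)}\ge0$ for $j\ge2$, together with $x_j^{(n)}\ge x_{j+1}^{(n-1)}\ge\cdots\ge x_{j+n-1}^{(1)}\ge0$, forces $x_{j'}^{(i')}=0$ whenever $i'+j'>n+1$. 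This is your Step 3, with (c) replaced by these forms only.
\end{itemize}
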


\begin{Thm}\cite[Theorem 6.1]{Na99}\label{thm:polyhedral A}
For any dominant weight $\lambda\in P^+$, the crystal $\mathcal B(\lambda)$ is realized as the following set 	
	\begin{equation}\label{eq:poly_Blambda}
		\Sigma_\iota[\lambda]=
		\left\{
		\overrightarrow{x}=(x_j^{(i)})_{i,j\ge 1}\ \middle|\ 
		\begin{array}{l}
			x_1^{(i)}\ge x_2^{(i-1)}\ge \cdots \ge x_i^{(1)}\ge 0,\quad 1\le i\le n,\\[2pt]
			x_j^{(i)}=0,\quad i+j>n+1,\\[2pt]
			\lambda_i-\lambda_{i+1}\ge x_j^{(i-j+1)}-x_j^{(i-j)},\quad 1\le j\le i\le n
		\end{array}
		\right\},
	\end{equation}
	with crystal data $\wt, \tilde f_i, \tilde e_i,\varepsilon_i, \varphi_i$  given in \eqref{eq:crystal operator}.	
\end{Thm}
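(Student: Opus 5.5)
The plan is to deduce the statement from Nakashima's general polyhedral realization of $\mathcal B(\lambda)$ (recalled in Appendix~\ref{sec:poly_general}), specialized to type $A_n$ and the periodic sequence $\iota$ of \eqref{eq:periodic sequence}. That general theorem embeds $\mathcal B(\lambda)$ into $\mathbb Z^{\infty}_\iota\otimes R_\lambda$. Under a positivity (ampleness) hypothesis on $\iota$ and $\lambda$, it identifies the image with the set of $\overrightarrow{x}\in\Sigma_\iota$ satisfying $\varphi(\overrightarrow{x})\ge 0$ for every $\varphi$ in a set $\Xi_\iota[\lambda]$ of affine functions. This set is generated from the initial functions
\[
\lambda^{(k)}(\overrightarrow{x})=\langle\alpha_k^\vee,\lambda\rangle - x_{\iota^{(k)}},\qquad k\in I,
\]
where $\iota^{(k)}$ is the first position of $\iota$ carrying the color $k$, by repeatedly applying the operators $\widehat S_k$. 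The first family of inequalities, $x_1^{(i)}\ge x_2^{(i-1)}\ge\cdots\ge x_i^{(1)}\ge 0$ together with $x_j^{(i)}=0$ for $i+j>n+1$, is exactly $\Sigma_\iota$ as given in Theorem~\ref{thm:polyhedral Binfty A}. So the whole task is to compute $\Xi_\iota[\lambda]$ and show that, modulo $\Sigma_\iota$, it is equivalent to the last family of inequalities.

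Step one: read off the initial functions in our coordinates. With $x_j^{(i)}=x_{(j-1)n+i}$ and $\langle\alpha_i^\vee,\lambda\rangle=\lambda_i-\lambda_{i+1}$, the first occurrence of color $i$ is $x_1^{(i)}$. Hence $\lambda^{(i)}=\lambda_i-\lambda_{i+1}-x_1^{(i)}$, which is the case $j=1$ of the displayed inequality, using the convention $x_1^{(0)}=0$.

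Step two: apply the operators $\widehat S_k$ to these initial functions. Recall that $\widehat S_k$ adds a multiple of the $k$-root vector $\beta_k$; in type $A$, $\beta_k$ has the shape $x_k+x_{k^{(+)}}$ minus the neighbouring-color coordinates lying between them. I would prove by induction on $j$ that the closure of $\{\lambda^{(i)}\}$ under the $\widehat S_k$ is
\[
\Xi_\iota[\lambda]=\Xi_\iota\ \cup\ \bigl\{\lambda_i-\lambda_{i+1}-x_j^{(i-j+1)}+x_j^{(i-j)} \ \bigm|\ 1\le j\le i\le n\bigr\},
\]
where $\Xi_\iota$ is the set giving $\Sigma_\iota$ and $x_j^{(0)}:=0$. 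At each step the coefficient vector has exactly one $-1$ and at most one $+1$. Applying $\widehat S_k$ to a function whose $-1$ sits at color $k$ in block $j$ moves that pair to block $j+1$, with the colors shifted by one. Once $i-j+1$ would exceed the allowed range, the relevant coordinate is identically zero in $\Sigma_\iota$, so the recursion closes up.

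Step three: check ampleness. I expect this to be the main obstacle, because the general theorem requires that no function obtained in the process has a coefficient pattern that breaks the "$-1$ before $+1$" positivity condition. Here I would verify directly that every function listed above has its $+1$ coefficient at a later position of $\iota$ than its $-1$ coefficient. In addition, $\widehat S_k$ either produces another function of the same form, or it produces $\lambda_k-\lambda_{k+1}+\cdots$ with a nonnegative constant term. Such a function is redundant given the listed ones, which is a telescoping check using the chain $x_1^{(i)}\ge x_2^{(i-1)}\ge\cdots$. Finally, I would confirm that the crystal data restrict correctly, namely $\varphi_i=\varepsilon_i+\langle\alpha_i^\vee,\wt\rangle$ with the $R_\lambda$ shift. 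Together these steps give $\Sigma_\iota[\lambda]$ as stated.
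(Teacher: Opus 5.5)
The paper gives no proof of this statement; it quotes \cite[Theorem 6.1]{Na99}. Your route of specializing Nakashima's general theorem, by computing the $\widehat S_k$-closure of the initial functions for this $\iota$, is the right way to reconstruct it. However, Step one is wrong, and the error breaks the argument.

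Nakashima's initial function is
\[
\lambda^{(i)}(\overrightarrow{x})=\langle\alpha_i^\vee,\lambda\rangle-\sum_{1\le j<\iota^{(i)}}\langle\alpha_i^\vee,\alpha_{i_j}\rangle x_j-x_{\iota^{(i)}},
\]
that is, $\sigma_{\iota^{(i)}}(\overrightarrow{x})-\sigma_0^{(i)}(\overrightarrow{x})$ with $\sigma_0^{(i)}$ as in \eqref{eq:sigma0i}. You dropped the middle sum. For the present $\iota$ one has $\iota^{(i)}=i$, and the only nonzero term of that sum is $j=i-1$. Hence $\lambda^{(i)}=\lambda_i-\lambda_{i+1}+x_1^{(i-1)}-x_1^{(i)}$.

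Your claim that $\lambda_i-\lambda_{i+1}-x_1^{(i)}$ ``is the case $j=1$'' via $x_1^{(0)}=0$ holds only for $i=1$. For $i\ge 2$ the $j=1$ inequality reads $\lambda_i-\lambda_{i+1}\ge x_1^{(i)}-x_1^{(i-1)}$, and $x_1^{(i-1)}$ is a genuine coordinate. Closing under $\widehat S_k$ only adds constraints, so your seeds cut out a strictly smaller, incorrect set. Concretely, take $n=2$ and $\lambda=\omega_1$. Computing with \eqref{eq:crystal operator}, the lowest weight element $\tilde f_2\tilde f_1\overrightarrow{0}$ has $x_1^{(1)}=x_1^{(2)}=1$. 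This violates your constraint $x_1^{(2)}\le\lambda_2-\lambda_3=0$.

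With the corrected seed, Step two works as you describe. Write $\varphi_{i,j}=\lambda_i-\lambda_{i+1}+x_j^{(i-j)}-x_j^{(i-j+1)}$ for $1\le j\le i$, with $x_j^{(0)}=0$.
\begin{itemize}
\item Applying $\widehat S_k$ at the $+1$ coordinate $x_j^{(i-j)}$ subtracts $\beta_k=x_j^{(i-j)}-x_j^{(i-j+1)}-x_{j+1}^{(i-j-1)}+x_{j+1}^{(i-j)}$ and yields $\varphi_{i,j+1}$.
\item Applying $\widehat S_k$ at the $-1$ coordinate returns $\varphi_{i,j-1}$ for $j\ge 2$.
\item Coordinates with coefficient $0$ are left unchanged.
\end{itemize}
The chain stops at $j=i$ because the lower colour index reaches $0$. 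It does not stop because some coordinate leaves the range $i+j\le n+1$, as you suggest; all coordinates occurring in $\varphi_{i,j}$ satisfy that bound.

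Step three is also much easier than you anticipate. The hypothesis in \cite{Na99} is ampleness, $\overrightarrow{0}\in\Sigma_\iota[\lambda]$, i.e.\ every function in $\Xi_\iota[\lambda]$ has nonnegative constant term. Here this is immediate, since every constant is $\lambda_i-\lambda_{i+1}\ge 0$. The ``$-1$ before $+1$'' criterion you describe is not the relevant condition, and no redundant functions with extra constant terms arise.
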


\subsection{The case $\mathcal B(\infty)$}\label{sec:YTAFPR}

We define the map
\begin{equation}\label{eq:TinftytoSigmainfty}
	\psi_\infty:\mathcal  T'(\infty)\longrightarrow \Sigma_\iota,\quad T\mapsto \overrightarrow{x}_T
\end{equation}
by setting 
\begin{equation*}
	x_j^{(i)}=
	\begin{cases}
		\sum_{k=1}^iz_{n+2-i-j}^k(T)& 1\leq i\leq n,\ 2\leq i+j\leq n+1,\\
		0&i+j> n+1.
	\end{cases}
\end{equation*}
for any $T\in \mathcal T'(\infty)$, where the tableau \( T \) is determined by $z_{r}^s$ $(1\leq r\leq n, 1\leq s\leq n+1-r)$.
\begin{Rmk}
	Since $x_j^{(i)}-x_{j+1}^{(i-1)}=z_{n+2-i-j}^i\geq 0$, it follows from  \eqref{eq:poly_Binfty} that the map $\psi_\infty$ in \eqref{eq:TinftytoSigmainfty} is well-defined.
\end{Rmk}

\begin{Thm}\label{thm:psiinfty}
	The map $\psi_\infty$ is a crystal isomorphism.
\end{Thm}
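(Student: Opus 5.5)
Since both $\mathcal T'(\infty)$ (by Proposition~\ref{prop:infcrystal} and Theorem~\ref{thm:realizationBinfty}) and $\Sigma_\iota$ (by Theorem~\ref{thm:polyhedral Binfty A}) are realizations of $\mathcal B(\infty)$, the cleanest route is to show that $\psi_\infty$ is a bijection sending the highest weight element to the highest weight element and commuting with the operators $\tilde f_i$. A bijection commuting with all $\tilde f_i$ and matching highest elements between two copies of $\mathcal B(\infty)$ automatically preserves $\wt$, $\varepsilon_i$ and $\varphi_i$, so it is a crystal isomorphism.

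\emph{Step 1: bijectivity and highest weight.} The map $T\mapsto \overrightarrow{x}_T$ is triangular. Indeed, $x_j^{(i)}-x_{j+1}^{(i-1)}=z_{n+2-i-j}^i$, with the convention $x^{(0)}=0$. Conversely, every $\overrightarrow{x}\in\Sigma_\iota$ determines a nonnegative family $z_r^s$ by these differences, and hence a unique RMLT. So $\psi_\infty$ is a bijection onto $\Sigma_\iota$. The element $\eta^{-1}(T_\infty)$, which has all $z_r^s=0$, is sent to $\overrightarrow{0}$, the highest element of $\Sigma_\iota$.

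\emph{Step 2: compare $\wt$ and $\varepsilon_i$ explicitly.} This gives an independent check and is also needed to identify where $\tilde f_i$ acts. The polyhedral weight is $-\sum_k x_k\alpha_{i_k}$, so the coefficient of $\alpha_i$ is $\sum_j x_j^{(i)}$. Substituting $x_j^{(i)}=\sum_{k\le i} z^k_{n+2-i-j}$ and reindexing $m=n+2-i-j$ (with $1\le m\le n+1-i$) gives $\sum_{m=1}^{n+1-i}\sum_{k=1}^i z_m^k$, which matches \eqref{eq:crystal on Yrinfty}.

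For $\varepsilon_i$, I would rewrite the polyhedral linear functions $\sigma_k(\overrightarrow{x})$ for the $i$-indexed positions of $\iota$ in terms of the $z$'s. Then I would compare them with \eqref{eq:epsilonTinfty} applied to $\eta(T)$, using $\varepsilon_i(T)=\varepsilon_{n+1-i}(\eta T)$ and $y^{l}_k=z_k^{n+2-l}$. The terms $\sum_{k=1}^{j}(y_k^{n+2-i}-y_{k-1}^{n+1-i})$ should match the $\sigma$-values at the $j$-th occurrence of $i$ (counted in a suitable direction), after telescoping.

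\emph{Step 3: $\tilde f_i$ compatibility.} By Step 2, the index maximizing $\sigma$ (the smallest maximizer, per the Nakashima--Zelevinsky rule) corresponds to a specific row $r$ of $\eta(T)$. In that row, the leftmost uncancelled $+$ of the $(n+1-i)$-signature turns an $n+1-i$ into $n+2-i$ (the letters $i+1$ and $i$ in reverse entries). I would check that this change, together with the possible left shift of the rows below, alters the $z$'s so that exactly $x^{(i)}_{j}$ increases by $1$ for the corresponding $j$ and every other coordinate is unchanged. Concretely, $z_r^{i}$ increases by $1$ and $z_r^{i+1}$ decreases by $1$ (with the case $z_r^{i+1}=0$ handled by the push-left rule, which is precisely the marginally large renormalization). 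Feeding this into $x_j^{(i')}=\sum_{k\le i'}z^k_{n+2-i'-j}$ shows that only $x^{(i)}_{n+2-i-r}$ changes.

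\emph{Main obstacle.} The hard part is this identification of the maximizer position with the row of the leftmost uncancelled $+$. It is the delicate index bookkeeping between the Far-Eastern reading signature and the $\sigma_k$ functions. I would first try it in the small cases $n=2$ and $n=3$ to fix the direction of the tie-breaking, and then argue in general by induction on the row index.
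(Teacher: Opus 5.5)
Your overall strategy is sound, and it is organized differently from the paper. The paper checks $\wt$ and $\varepsilon_i$ by direct substitution, then proves compatibility with $\tilde e_i$, and gets $\tilde f_i$ for free because $\tilde f_i$ never vanishes on $\mathcal T'(\infty)$. You instead reduce to ``top element to top element plus $\tilde f_i$-equivariance''. That reduction is legitimate: what it uses (every element reachable from $\eta^{-1}(T_\infty)$, $\tilde f_i$ injective and nowhere zero with partial inverse $\tilde e_i$, $\varepsilon_i$ equal to the $\tilde e_i$-string length) transfers from $\mathcal T(\infty)$ through $\eta$. Your Step 1, Step 2 and the $z$-bookkeeping of Step 3 are also correct.

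But the proposal stops exactly at the only nontrivial point. You never show that the leftmost uncancelled $+$ of the $(n+1-i)$-signature of $\eta(T)$ lies in the row $r$ with $\min M^{(i)}(\vec x_T)=(n+1-i-r)n+i$. Equality of $\varepsilon_i$ matches only the \emph{value} of a maximum, not \emph{where} the operator acts. ``Try $n=2,3$, then induct on the row index'' is not an argument, so as written this is a genuine gap.

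It closes directly. Put $c=n+1-i$ and $Y=\eta(T)$. Marginal largeness puts the non-diagonal boxes of row $k$ strictly to the right of the last box of row $k+1$. Hence the Far-Eastern $c$-signature of $Y$ is the block word
\[
(-)^{y_1^{c+1}}(+)^{y_1^{c}}(-)^{y_2^{c+1}}(+)^{y_2^{c}}\cdots(-)^{y_{c-1}^{c+1}}(+)^{y_{c-1}^{c}}(-)^{y_c^{c+1}}\;\cdots,
\]
whose tail comes from the diagonal $c$'s of row $c$ and the diagonal $(c+1)$'s of row $c+1$. The tail begins with a $+$ that is never cancelled, and its $-$'s cancel inside the tail.

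Let $S_m=\sum_{l=1}^m(y_l^{c+1}-y_{l-1}^{c})$, with $y_0^c=0$, be the running value $\#(-)-\#(+)$ at the end of the $m$-th $-$-block. The leftmost uncancelled $+$ is the first letter after the \emph{last} prefix on which the running value is maximal. That is the first $+$ coming from row $m^*$, where $m^*$ is the \emph{largest} maximizer of $S_m$ over $1\le m\le c$. For $m^*<c$ this block is nonempty, since otherwise $S_{m^*+1}\ge S_{m^*}$.

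On the polyhedral side, the manipulation in \eqref{eq:epsilonirhoT} is termwise. It gives $\sigma_{(j-1)n+i}(\vec x_T)=S_{n+2-i-j}$ for $1\le j\le n+1-i$, while $\sigma_{(j-1)n+i}(\vec x_T)=0$ for $j\ge n+2-i$. Since $S_1=y_1^{c+1}\ge0$, the latter never realize $\min M^{(i)}$. As $j\mapsto n+2-i-j$ reverses order, $\min M^{(i)}(\vec x_T)=(j^*-1)n+i$ with $j^*=n+2-i-m^*$. So the tie-breaking is forced, and there is nothing to calibrate on examples. The same word puts the rightmost uncancelled $-$ in the smallest maximizing row; that is the identification with $\max M^{(i)}$ which the paper's $\tilde e_i$ argument uses.

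Finally, fix your parenthetical about the push-left rule. The push-left case is $r=n+1-i$, where the changed box is one of the infinitely many diagonal $(i+1)$'s of that row, and $z_r^{i+1}$ is not a coordinate at all. There only $z^{i}_{n+1-i}$ changes, hence only $x^{(i)}_1$. For $r<n+1-i$, $z_r^{i+1}=0$ just means $\tilde f_i$ cannot act in row $r$.
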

\begin{proof}
	
	It is straightforward to verify that \( \psi_\infty \) is bijective. In what follows, we show that \( \psi_\infty \) preserves the crystal structure.
	
	\vskip 1mm
	
	From the definition of $\wt$ in \eqref{eq:crystal on Yrinfty}, it follows that
	\begin{equation}\label{eq:wtrhoT}
		\begin{aligned}
			\wt(\psi_\infty(T))=&\wt(\overrightarrow{x}_T)	
			=-\sum_{1<i+j\leq	 n+1}x_j^{(i)}\alpha_{i}=-\sum_{i=1}^n(\sum_{j=1}^{n+1-i}(\sum_{k=1}^iz_{n+2-i-j}^k))\alpha_i\\
			=&-\sum_{i=1}^n(\sum_{j=1}^{n+1-i}(\sum_{k=1}^iz_{j}^k))\alpha_i=\wt(T).
		\end{aligned}
	\end{equation}

	By \eqref{eq:epsilonTinfty} and Lemma \ref{lem:sigmaixvalue}, we obtain
	\begin{equation}\label{eq:epsilonirhoT}
		\begin{aligned}
			&\varepsilon_i(\psi_\infty T)=\varepsilon_i(\overrightarrow{x}_T)\\
			=&\max_{j\geq 1}\{x_j^{(i)}-x_j^{(i+1)}+2\sum_{l=j+1}^{n+1-i}x_l^{(i)}-\sum_{l=j+1}^{n+2-i}x_l^{(i-1)}-\sum_{l=j+1}^{n-i}x_l^{(i+1)}\}\\
			=&\max_{j\geq 1}\{
			\sum_{k=1}^iz_{n+2-i-j}^{k}-\sum_{k=1}^{i+1}z_{n+1-i-j}^{k}+2\sum_{l=j+1}^{n+1-i}\sum_{k=1}^i z_{n+2-i-l}^{k}\\
			&\qquad-\sum_{l={j+1}}^{n+2-i}\sum_{k=1}^{i-1}z_{n+3-l-i}^{k}-\sum_{l=j+1}^{n-i}\sum_{k=1}^{i+1}z_{n+1-i-l}^{k}
			\}\\
			=&\max_{j\geq 1}\{\sum_{l=1}^{n+2-i-j}\sum_{k=1}^i z_{l}^{k}+\sum_{l=1}^{n+1-i-j}\sum_{k=1}^i z_{l}^{k}
			-\sum_{l=1}^{n+2-i-j}\sum_{k=1}^{i-1}z_{l}^{k}-\sum_{l=1}^{n+1-i-j}\sum_{k=1}^{i+1}z_{l}^{k}\}\\
			=&\max_{j\geq 1}\{{\sum_{l=1}^{n+2-i-j}}(z_l^{i}-z_{l-1}^{i+1})\}
			=\max_{1\leq j\leq n+1-i}\{{\sum_{l=1}^{n+2-i-j}}(z_l^{i}-z_{l-1}^{i+1})\}\\
			=&\max_{1\leq j\leq n+1-i}\{\sum_{k=1}^j(y_k^{n+2-i}-y_{k-1}^{n+1-i})\}
			=\varepsilon_{n+1-i}(\eta T)=\varepsilon_i(T).
		\end{aligned}
	\end{equation}

	As a consequence of \eqref{eq:wtrhoT}--\eqref{eq:epsilonirhoT}, the identity $\varphi_i(\psi_\infty T)=\varphi_i(T)$ holds naturally.
	
	\vskip 1mm
	
	We now proceed to prove that \( \tilde{e}_i (\psi_\infty T) = \psi_\infty (\tilde{e}_i T) \).
	From \eqref{eq:epsilonirhoT}, it follows that
	$$ \varepsilon_{n+1-i}(\eta T) = \max_{1\leq j\leq n+1-i}\{{\sum_{l=1}^{n+2-i-j}}(z_l^{i}-z_{l-1}^{i+1})\}.$$
	
	Suppose that 
	\begin{equation}\label{eq:varepsilonetaT}
		\varepsilon_{n+1-i}(\eta T) = {\sum_{l=1}^{n+2-i-j}}(y_l^{n+2-i}-y_{l-1}^{n+1-i})>0.
	\end{equation}
	
	For \( j \neq 1 \), the action of \( \tilde{e}_{n+1-i} \) on \( \eta T \) changes 
	$$ 
	y_{n+2-i-j}^{n+2-i} \text { to }  y_{n+2-i-j}^{n+2-i}-1 ,\   y_{n+2-i-j}^{n+1-i}  \text{ to }  y_{n+2-i-j}^{n+1-i}+1 
	$$ 
	and leaves all other \( y_i^j \) in \( \eta T \) unchanged.
	If  \( j = 1 \), then the action of \( \tilde{e}_{n+1-i} \) on \( \eta T \) changes \( y_{n+1-i}^{n+2-i} \) to \( y_{n+1-i}^{n+2-i}-1 \),  and leaves all other \( y_i^j \) in \( \eta T \) unchanged. 
	
	\vskip 1mm
	
	From the definition of  \( \psi_\infty \) in \eqref{eq:TinftytoSigmainfty}, it follows that
	\begin{equation}\label{eq:xT-XeT}
		\begin{aligned}
			(\overrightarrow{x}_T)_{(r-1)n+s}-(\overrightarrow{x}_{\tilde{e}_iT})_{(r-1)n+s}=
			\begin{cases}
				1& \text{if}\ (r,s)=(j,i),\\
				0& \text{if}\ (r,s)\neq (j,i).
			\end{cases}
		\end{aligned}
	\end{equation}

	By \eqref{eq:epsilonirhoT} and the definitions of $M^{(i)}(\overrightarrow{x})$, $\varepsilon_i(\overrightarrow{x})$ in \eqref{eq:sigmai and Mi}--\eqref{eq:crystal operator on Zinfty}, we obtain that $\max M^{(i)}(\overrightarrow{x}_T)=(j-1)n+i$. Hence, we conclude that
	\begin{equation}\label{eq:xT-exT}
		\begin{aligned}
			(\overrightarrow{x}_T)_{(r-1)n+s}-(\tilde{e}_i\overrightarrow{x}_{T})_{(r-1)n+s}=
			\begin{cases}
				1& \text{if}\ (r,s)=(j,i),\\
				0& \text{if}\ (r,s)\neq (j,i).
			\end{cases}
		\end{aligned}
	\end{equation}
	
	The formulas in \eqref{eq:xT-XeT}--\eqref{eq:xT-exT} imply that \(  \tilde{e}_i\overrightarrow{x}_{T}=\overrightarrow{x}_{\tilde{e}_iT}\). 
	
	\vskip 1mm
	
	By the construction of RMLT, for any \( T \in \mathcal{T}'(\infty) \) and \( i \in I \), we have \( \tilde{f}_i T = T' \in \mathcal{T}'(\infty) \), which implies $\tilde{e}_iT'=T$. Since \( \tilde{e}_i \psi_\infty T' = \psi_\infty \tilde{e}_i T' \), it follows that \(  \psi_\infty T' =\tilde{f}_i( \psi_\infty \tilde{e}_i T') \).  Therefore, we conclude that $\psi_\infty(\tilde{f}_i T)=\tilde{f}_i (\psi_\infty T)$.
\end{proof}

From Proposition \ref{prop:infcrystal}--\ref{prop:rhoinfty-via-eta} and Theorem~\ref{thm:psiinfty}, we obtain the following corollary.

\begin{Cor}
	Define
	\[
	\rho_{\infty}^{\mathrm{poly}}
	\ :=\
	\psi_\infty\ \circ\ \eta^{-1}\ \circ\ \rho_\infty\ \circ\ \eta\ \circ\ \psi_\infty^{-1}
	\ :\ \Sigma_\iota \longrightarrow \Sigma_\iota.
	\]
	Then $\rho_{\infty}^{\mathrm{poly}}$ is an involution on $\Sigma_\iota$. Moreover, for all $i\in I$ and
	$\vec x\in\Sigma_\iota$,
	\[
	\rho_{\infty}^{\mathrm{poly}}\circ \tilde f_i
	\;=\;
	\tilde f_{\,n+1-i}\circ \rho_{\infty}^{\mathrm{poly}},
	\qquad
	\rho_{\infty}^{\mathrm{poly}}\circ \tilde e_i
	\;=\;
	\tilde e_{\,n+1-i}\circ \rho_{\infty}^{\mathrm{poly}},
	\qquad
	\wt\bigl(\rho_{\infty}^{\mathrm{poly}}(\vec x)\bigr)
	\;=\;
	\tau\bigl(\wt(\vec x)\bigr).
	\]
\end{Cor}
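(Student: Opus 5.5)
The plan is to treat $\rho_\infty^{\mathrm{poly}}$ as the conjugate of $\eta^{-1}\circ\rho_\infty\circ\eta$ by the crystal isomorphism $\psi_\infty$ of Theorem~\ref{thm:psiinfty}, and to transport the properties of $\rho_\infty$ from Proposition~\ref{prop:rhoinfty-via-eta} and \eqref{eq:rho-intertwine}.

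The first step is to set $\rho' := \eta^{-1}\circ\rho_\infty\circ\eta : \mathcal T'(\infty)\to\mathcal T'(\infty)$ and record its properties. Since $\rho_\infty$ is an involution, so is $\rho'$. To compare $\rho'\circ\tilde f_i$ with $\tilde f_{n+1-i}\circ\rho'$ on $\mathcal T'(\infty)$, I use the definition \eqref{eq:crystal on Yrinfty}, which gives $\tilde f_i = \eta^{-1}\tilde f_{n+1-i}\eta$. Combining this with \eqref{eq:rho-intertwine},
\[
\rho'\tilde f_i=\eta^{-1}\rho_\infty \tilde f_{n+1-i}\eta=\eta^{-1}\tilde f_{i}\rho_\infty\eta=\tilde f_{n+1-i}\,\eta^{-1}\rho_\infty\eta=\tilde f_{n+1-i}\rho'.
\]
The $\tilde e$ version follows from the same computation once I know $\rho_\infty\tilde e_i=\tilde e_{n+1-i}\rho_\infty$ on $\mathcal T(\infty)$. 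I derive this from the $\tilde f$ relation and axiom (vi). If $\tilde e_iX=Y\neq0$, then $X=\tilde f_iY$, so $\rho_\infty X=\tilde f_{n+1-i}\rho_\infty Y$, and hence $\tilde e_{n+1-i}\rho_\infty X=\rho_\infty Y$. If instead $\tilde e_iX=0$, I must show $\tilde e_{n+1-i}\rho_\infty X=0$. Suppose not, and write $\tilde e_{n+1-i}\rho_\infty X=Z$. Then $\rho_\infty X=\tilde f_{n+1-i}Z$, and applying the involution gives $X=\tilde f_i\rho_\infty Z$, which contradicts $\tilde e_iX=0$.

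The weight statement comes from two facts. First, $\wt(\rho_\infty X)=\tau(\wt X)$, as in the Remark following Proposition~\ref{prop:rhoinfty-via-eta}; this can also be checked directly, since $\wt$ changes by $-\alpha_i$ along $\tilde f_i$, $\wt(T_\infty)=0$, and \eqref{eq:rho-word} holds. Second, \eqref{eq:wt=tauT} gives $\wt(T)=\tau(\wt(\eta T))$ on $\mathcal T'(\infty)$. Together these yield $\wt(\rho' T)=\tau\wt(\rho_\infty\eta T)=\tau\tau\wt(\eta T)=\tau(\wt T)$, using that $\tau$ is an involution.

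Finally, I conjugate by $\psi_\infty$. Because $\psi_\infty$ is a bijective crystal morphism (Theorem~\ref{thm:psiinfty}), it commutes with $\tilde f_i$ and $\tilde e_i$, including the case where the value is $0$, and it preserves $\wt$. Therefore $\rho_\infty^{\mathrm{poly}}=\psi_\infty\rho'\psi_\infty^{-1}$ is an involution and inherits all three identities. The main point requiring care is the handling of the $0$ cases in the $\tilde e$-intertwining, which the argument above settles; everything else is formal transport of structure.
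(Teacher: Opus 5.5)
Your proposal is correct and follows essentially the same route as the paper, which obtains the corollary directly by transporting the properties of $\rho_\infty$ through $\eta$ and the crystal isomorphism $\psi_\infty$ of Theorem~\ref{thm:psiinfty}; the properties used are involutivity, the intertwining \eqref{eq:rho-intertwine}, and $\wt\circ\rho_\infty=\tau\circ\wt$. You spell out details the paper leaves implicit, in particular deriving the $\tilde e$-intertwining, including the case $\tilde e_iX=0$, from the $\tilde f$-relation and involutivity.
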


\vskip 2mm

\subsection{The case $\mathcal B(\lambda)$}

By Theorem \ref{thm:polyhedral A}, for any $\overrightarrow{x}\in \Sigma_\iota[\lambda]$, we define the sequence 
\begin{equation}\label{eq:seq Lambda}
{\Lambda}_i(\overrightarrow{x})=(x_k^{(i-1)}+\lambda_{k+i-1})_{1\leq k\leq n+2-i}
\end{equation}
for each $i\in\{1,2,\dots,n+1\}$.

\begin{Lem}\label{eq:inequality of poly}
For any two consecutive  sequences $\Lambda_i(\overrightarrow{x})$ and $\Lambda_{i+1}(\overrightarrow{x})$$(1\leq i\leq n)$ in \eqref{eq:seq Lambda}, the following inequalities hold:
\begin{equation}\label{eq：tableau inequality}
x_k^{(i-1)}+\lambda_{k+i-1}\geq x_k^{(i)}+\lambda_{k+i}\geq x_{k+1}^{(i-1)}+\lambda_{k+i}
\end{equation}
where $1\leq k\leq n+1-i$.
\end{Lem}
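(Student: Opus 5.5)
The plan is to obtain both inequalities directly from the defining inequalities of $\Sigma_\iota[\lambda]$ in Theorem~\ref{thm:polyhedral A}, by a reindexing and nothing more. First I fix the conventions implicit in \eqref{eq:seq Lambda}: $x_k^{(0)}:=0$ for all $k$ (so that $\Lambda_1(\overrightarrow{x})=(\lambda_k)_{1\le k\le n+1}$), and $\lambda_{n+1}:=0$. The range $1\le k\le n+1-i$ gives $1\le k$, $i\ge 1$ and $m:=k+i-1\le n$. These are exactly the index constraints under which the relevant defining inequalities are available.

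\emph{Right-hand inequality.} After cancelling $\lambda_{k+i}$ it reads $x_k^{(i)}\ge x_{k+1}^{(i-1)}$. Take the chain in \eqref{eq:poly_Blambda} indexed by $m=k+i-1\in\{1,\dots,n\}$:
\[
x_1^{(m)}\ge x_2^{(m-1)}\ge\cdots\ge x_m^{(1)}\ge 0 .
\]
Its general term is $x_j^{(m+1-j)}$. For $j=k$ this is $x_k^{(i)}$, and the next term is $x_{k+1}^{(i-1)}$. If $i\ge2$ these are consecutive entries of the chain, which gives the claim. If $i=1$, then $x_k^{(1)}=x_m^{(1)}$ is the last entry, and the claim becomes $x_k^{(1)}\ge 0=x_{k+1}^{(0)}$, which is the final inequality of the chain.

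\emph{Left-hand inequality.} It is equivalent to
\[
\lambda_{k+i-1}-\lambda_{k+i}\ \ge\ x_k^{(i)}-x_k^{(i-1)} .
\]
This is the third family of conditions in \eqref{eq:poly_Blambda}, namely $\lambda_m-\lambda_{m+1}\ge x_j^{(m-j+1)}-x_j^{(m-j)}$, with the choice $m=k+i-1$ and $j=k$. The required range $1\le j\le m\le n$ holds because $i\ge1$ and $k+i-1\le n$. Substituting gives $m-j+1=i$ and $m-j=i-1$, which is exactly the displayed inequality. When $i=1$, the term $x_k^{(0)}$ is $0$ under the convention, consistent with how Theorem~\ref{thm:polyhedral A} reads in that case.

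The only delicate point is bookkeeping: one must check that the chosen $(m,j)$ always lie in the admissible range, and that the boundary cases $i=1$ (where $x^{(0)}$ appears) and $k+i=n+1$ (where $\lambda_{n+1}=0$ appears) agree with the conventions used in defining $\Lambda_i$. Once these are settled, there is no genuine obstacle.
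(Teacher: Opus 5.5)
Your proof is correct and follows essentially the same route as the paper. The left inequality is the third family of defining conditions of $\Sigma_\iota[\lambda]$ with $m=k+i-1$ and $j=k$, and the right inequality is one step of the chain $x_1^{(m)}\ge\cdots\ge x_m^{(1)}\ge 0$ for $m=k+i-1$; your explicit handling of the boundary conventions $x^{(0)}_k=0$ and $\lambda_{n+1}=0$ is just more careful than the paper's one-line argument.
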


\begin{proof}
It follows from \eqref{eq:poly_Blambda} that $\lambda_{k+i-1}-\lambda_{(k+i-1)+1}\geq x_j^{(k+i-1-j+1)}-x_j^{(k+i-1-j)}$. By setting 
$j=k$, we obtain the first inequality in \eqref{eq：tableau inequality}. The second inequality also follows immediately from  \eqref{eq:poly_Blambda}.
\end{proof}

Let $\Lambda(\overrightarrow{x})=(\Lambda_i(\overrightarrow{x}))_{1\leq i\leq n+1}$.
By Lemma \ref{eq:inequality of poly}, each sequence $\Lambda_i(\overrightarrow{x})$ can be interpreted  as a Young diagram, and the skew Young diagram $\Lambda_i(\overrightarrow{x})/\Lambda_{i+1}(\overrightarrow{x})$ contains no adjacent boxes within the same column.

\vskip 1mm

For each \(i \in \{1, 2, \dots, n+1\}\), fill every box in the skew Young diagram \(\Lambda_i(\overrightarrow{x}) / \Lambda_{i+1}(\overrightarrow{x})\) with the entry \(i\). By stacking these filled skew diagrams together, we obtain a reverse semi-standard Young tableau, denoted by \(T_{\overrightarrow{x}}\).  
This construction defines a map
\begin{equation}\label{eq:psi}
\psi_\lambda:\Sigma_\iota[\lambda]\longrightarrow \mathcal T'(\lambda) \ \text{by setting}\ \overrightarrow{x}\mapsto T_{\overrightarrow{x}}.
\end{equation}

\begin{Rmk}
If $\overrightarrow{x}=\mathbf 0$, then we denote $T_{\overrightarrow{x}}=\mathbf 0$.
\end{Rmk}

If there is no risk of confusion, we will denote \(\Lambda_i(\overrightarrow{x})\) by \(\Lambda_i\) in the following text. Let \(\Lambda_i^{(j)}(\overrightarrow{x})\) denote the \(j\)-th entry of \(\Lambda_i(\overrightarrow{x})\), and let \(|T_{\overrightarrow{x}}|_i^{(j)}\) denote the number of boxes labeled \(i\) in the \(j\)-th row of the tableau \(T_{\overrightarrow{x}}\).

%

\begin{Ex}
We consider the case of $n=9$ and $\lambda=8\epsilon_1+6\epsilon_2+4\epsilon_3+3\epsilon_4+3\epsilon_5+2\epsilon_6+\epsilon_7+\epsilon_8$.	Let $\overrightarrow{x}=(\cdots, x_2, x_1)$ be given in the following form:
\begin{align*}
&(x_1^{(1)}, x_2^{(1)}, x_3^{(1)}, x_4^{(1)}, x_5^{(1)}, x_6^{(1)}, x_7^{(1)}, x_8^{(1)}, x_9^{(1)})=(1,1,1,0,0,0,0,0,0),\\
&(x_1^{(2)}, x_2^{(2)}, x_3^{(2)}, x_4^{(2)}, x_5^{(2)}, x_6^{(2)}, x_7^{(2)}, x_8^{(2)})=(2,2,1,1,0,0,1,0),\\
&(x_1^{(3)}, x_2^{(3)}, x_3^{(3)}, x_4^{(3)}, x_5^{(3)}, x_6^{(3)}, x_7^{(3)})
=(2,2,2,1,0,1,0),\\
&(x_1^{(4)}, x_2^{(4)}, x_3^{(4)}, x_4^{(4)}, x_5^{(4)}, x_6^{(4)})=(2,2,3,0,1,0),\ (x_1^{(5)}, x_2^{(5)}, x_3^{(5)}, x_4^{(5)}, x_5^{(5)})=(3,3,1,1,0),\\
&(x_1^{(6)}, x_2^{(6)}, x_3^{(6)}, x_4^{(6)})=(3,2,1,0),\ (x_1^{(7)}, x_2^{(7)}, x_3^{(7)})=(2,3,0),\\
&(x_1^{(8)}, x_2^{(8)})=(3,1),\ (x_1^{(9)})=(2),
\end{align*}
and $x_j^{(i)}=0$ for $i+j>10$. Then we have
\begin{align*}
\Lambda_1&=(8,6,4,3,3,2,1,1),\ \Lambda_2=(7,5,4,3,2,1,1),\ \Lambda_3=(6,5,4,3,1,1,1),\\
\Lambda_4&=(5,5,4,2,1,1),\ \Lambda_5=(5,4,4,1,1),\ \Lambda_6=(5,4,2,1),\ \Lambda_7=(4,3,1,0),\\
\Lambda_8&=(3,3),\ \Lambda_9=(3,1),\ \Lambda_{10}=(2).
\end{align*}
The skew-Young tableaux corresponding to the sequence $$(\Lambda_1/\Lambda_2,\  \Lambda_2/\Lambda_3, \ \Lambda_3/\Lambda_4,\  \Lambda_4/\Lambda_5, \ \Lambda_5/\Lambda_6,\  \Lambda_6/\Lambda_7, \ \Lambda_7/\Lambda_8,\  \Lambda_8/\Lambda_9, \ \Lambda_9/\Lambda_{10},\  \Lambda_{10})$$
are listed as follows:
\begin{figure}[H] 
	\begin{tikzpicture}[scale=0.48]
		\begin{scope}[shift={(0,0)}]	
		\draw (0,0)--(8,0);
		\draw (0,-1)--(8,-1);
		\draw (0,-2)--(6,-2);
		\draw (0,-3)--(4,-3);
		\draw (0,-4)--(3,-4);
		\draw (0,-5)--(3,-5);
		\draw (0,-6)--(2,-6);
		\draw (0,-7)--(1,-7);	
		\draw (0,-8)--(1,-8);	
		\draw (0,0)--(0,-8);
		\draw (1,0)--(1,-8);
		\draw (2,0)--(2,-6);
		\draw (3,0)--(3,-5);
		\draw (4,0)--(4,-3);
		\draw (5,0)--(5,-2);
		\draw (6,0)--(6,-2);
		\draw (7,0)--(7,-1);
		\draw (8,0)--(8,-1);
\node at (7.5, -0.5){$1$};
	\node at (5.5, -1.5){$1$};
	\node at (2.5,-4.5){$1$};
	\node at (1.5,-5.5){$1$};
		\node at (0.5,-7.5){$1$};
\end{scope}		

\begin{scope}[shift={(9,0)}]	
	\draw (0,0)--(7,0);
	\draw (0,-1)--(7,-1);
	\draw (0,-2)--(5,-2);
	\draw (0,-3)--(4,-3);
	\draw (0,-4)--(3,-4);
	\draw (0,-5)--(2,-5);
	\draw (0,-6)--(1,-6);
	\draw (0,-7)--(1,-7);	
	
	\draw (0,0)--(0,-7);
	\draw (1,0)--(1,-7);
	\draw (2,0)--(2,-5);
	\draw (3,0)--(3,-4);
	\draw (4,0)--(4,-3);
	\draw (5,0)--(5,-2);
	\draw (6,0)--(6,-1);
	\draw (7,0)--(7,-1);
\node at (6.5, -0.5){$2$};
\node at (1.5,-4.5){$2$};
\end{scope}	

\begin{scope}[shift={(17,0)}]	
	\draw (0,0)--(6,0);
	\draw (0,-1)--(6,-1);
	\draw (0,-2)--(5,-2);
	\draw (0,-3)--(4,-3);
	\draw (0,-4)--(3,-4);
	\draw (0,-5)--(1,-5);
	\draw (0,-6)--(1,-6);
	\draw (0,-7)--(1,-7);	
	
	\draw (0,0)--(0,-7);
	\draw (1,0)--(1,-7);
	\draw (2,0)--(2,-4);
	\draw (3,0)--(3,-4);
	\draw (4,0)--(4,-3);
	\draw (5,0)--(5,-2);
	\draw (6,0)--(6,-1);
\node at (5.5, -0.5){$3$};
\node at (2.5, -3.5){$3$};
\node at (0.5,-6.5){$3$};
\end{scope}	

\begin{scope}[shift={(24,0)}]	
	\draw (0,0)--(5,0);
	\draw (0,-1)--(5,-1);
	\draw (0,-2)--(5,-2);
	\draw (0,-3)--(4,-3);
	\draw (0,-4)--(2,-4);
	\draw (0,-5)--(1,-5);
	\draw (0,-6)--(1,-6);

	\draw (0,0)--(0,-6);
	\draw (1,0)--(1,-6);
	\draw (2,0)--(2,-4);
	\draw (3,0)--(3,-3);
	\draw (4,0)--(4,-3);
	\draw (5,0)--(5,-2);
\node at (4.5, -1.5){$4$};
\node at (1.5, -3.5){$4$};
\node at (0.5,-5.5){$4$};
\end{scope}	

\begin{scope}[shift={(0,-10)}]	
	\draw (0,0)--(5,0);
	\draw (0,-1)--(5,-1);
	\draw (0,-2)--(4,-2);
	\draw (0,-3)--(4,-3);
	\draw (0,-4)--(1,-4);
	\draw (0,-5)--(1,-5);
	\draw (0,-5)--(1,-5);

	\draw (0,0)--(0,-5);
	\draw (1,0)--(1,-5);
	\draw (2,0)--(2,-3);
	\draw (3,0)--(3,-3);
	\draw (4,0)--(4,-3);
	\draw (5,0)--(5,-1);
\node at (2.5, -2.5){$5$};
\node at (3.5, -2.5){$5$};
\node at (0.5,-4.5){$5$};
\end{scope}	

\begin{scope}[shift={(6,-10)}]	
	\draw (0,0)--(5,0);
	\draw (0,-1)--(5,-1);
	\draw (0,-2)--(4,-2);
	\draw (0,-3)--(2,-3);
	\draw (0,-4)--(1,-4);
	\draw (0,0)--(0,-4);
	\draw (1,0)--(1,-4);
	\draw (2,0)--(2,-3);
	\draw (3,0)--(3,-2);
	\draw (4,0)--(4,-2);
	\draw (5,0)--(5,-1);
\node at (4.5, -0.5){$6$};
\node at (3.5, -1.5){$6$};
\node at (1.5, -2.5){$6$};
\node at (0.5,-3.5){$6$};
\end{scope}	

\begin{scope}[shift={(12,-10)}]	
	\draw (0,0)--(4,0);
	\draw (0,-1)--(4,-1);
	\draw (0,-2)--(3,-2);
	\draw (0,-3)--(1,-3);

	\draw (0,0)--(0,-3);
	\draw (1,0)--(1,-3);
	\draw (2,0)--(2,-2);
	\draw (3,0)--(3,-2);
	\draw (4,0)--(4,-1);
\node at (3.5, -0.5){$7$};
\node at (0.5,-2.5){$7$};
\end{scope}	

\begin{scope}[shift={(17,-10)}]	
	\draw (0,0)--(3,0);
	\draw (0,-1)--(3,-1);
	\draw (0,-2)--(3,-2);	
	\draw (0,0)--(0,-2);
	\draw (1,0)--(1,-2);
	\draw (2,0)--(2,-2);
	\draw (3,0)--(3,-2);
\node at (1.5, -1.5){$8$};
\node at (2.5, -1.5){$8$};
\end{scope}	

\begin{scope}[shift={(21,-10)}]	
	\draw (0,0)--(3,0);
	\draw (0,-1)--(3,-1);
	\draw (0,-2)--(1,-2);	
	\draw (0,0)--(0,-2);
	\draw (1,0)--(1,-2);
	\draw (2,0)--(2,-1);
	\draw (3,0)--(3,-1);
\node at (2.5, -0.5){$9$};
\node at (0.5,-1.5){$9$};
\end{scope}	

\begin{scope}[shift={(25,-10)}]	
	\draw (0,0)--(2,0);
	\draw (0,-1)--(2,-1);
	\draw (0,0)--(0,-1);
	\draw (1,0)--(1,-1);
	\draw (2,0)--(2,-1);
\node at (0.5,-0.5){$10$};
\node at (1.5, -0.5){$10$};
\end{scope}	
	\end{tikzpicture}
\end{figure}
We then assemble these skew Young tableaux to form the reverse semi-standard Young tableau \( T_{\overrightarrow{x}} \) as follows:
\begin{figure}[H] 
	\begin{tikzpicture}[scale=0.48]
	\draw (0,0)--(8,0);
	\draw (0,-1)--(8,-1);
	\draw (0,-2)--(6,-2);
	\draw (0,-3)--(4,-3);
	\draw (0,-4)--(3,-4);
	\draw (0,-5)--(3,-5);
	\draw (0,-6)--(2,-6);
	\draw (0,-7)--(1,-7);	
	\draw (0,-8)--(1,-8);	
		
\draw (0,0)--(0,-8);
\draw (1,0)--(1,-8);
\draw (2,0)--(2,-6);
\draw (3,0)--(3,-5);
\draw (4,0)--(4,-3);
\draw (5,0)--(5,-2);
\draw (6,0)--(6,-2);
\draw (7,0)--(7,-1);
\draw (8,0)--(8,-1);

\node at (0.5,-0.5){$10$};
\node at (1.5, -0.5){$10$};
\node at (2.5, -0.5){$9$};
\node at (3.5, -0.5){$7$};
\node at (4.5, -0.5){$6$};
\node at (5.5, -0.5){$3$};
\node at (6.5, -0.5){$2$};
\node at (7.5, -0.5){$1$};
\node at (0.5,-1.5){$9$};
\node at (1.5, -1.5){$8$};
\node at (2.5, -1.5){$8$};
\node at (3.5, -1.5){$6$};
\node at (4.5, -1.5){$4$};
\node at (5.5, -1.5){$1$};
\node at (0.5,-2.5){$7$};
\node at (1.5, -2.5){$6$};
\node at (2.5, -2.5){$5$};
\node at (3.5, -2.5){$5$};
\node at (0.5,-3.5){$6$};
\node at (1.5, -3.5){$4$};
\node at (2.5, -3.5){$3$};
\node at (0.5,-4.5){$5$};
\node at (1.5,-4.5){$2$};
\node at (2.5,-4.5){$1$};
\node at (0.5,-5.5){$4$};
\node at (1.5,-5.5){$1$};
\node at (0.5,-6.5){$3$};
\node at (0.5,-7.5){$1$};
\end{tikzpicture}
\end{figure}
\end{Ex}

\begin{Thm}\label{thm: psilambda}
The map $\psi_\lambda$ in \eqref{eq:psi}  is a crystal isomorphism.
\end{Thm}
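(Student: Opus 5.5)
We will prove Theorem~\ref{thm: psilambda} in three steps: bijectivity, compatibility of $\wt,\varepsilon_i,\varphi_i$, and compatibility of $\tilde e_i$ (which gives $\tilde f_i$), following the pattern of the proof of Theorem~\ref{thm:psiinfty}.

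\emph{Bijectivity.} We construct the inverse explicitly. Given $T\in\mathcal T'(\lambda)$ and $1\le i\le n+1$, let $\Lambda_i(T)$ be the shape formed by the boxes of $T$ with entries $\ge i$. Since $T$ is reverse semi-standard, entries $\ge i$ occupy an initial segment of each row and of each column, so $\Lambda_i(T)$ is a Young diagram. We then set
\[
x_k^{(i-1)}:=\Lambda_i^{(k)}(T)-\lambda_{k+i-1}.
\]
Conversely, for $\overrightarrow{x}\in\Sigma_\iota[\lambda]$ we have $\Lambda_1=\lambda$, since $x^{(0)}\equiv 0$. We must check that $T\mapsto\overrightarrow{x}$ lands in $\Sigma_\iota[\lambda]$, i.e.\ that the three families of inequalities in \eqref{eq:poly_Blambda} are equivalent to the following two conditions:
\begin{itemize}
\item[(a)] the $\Lambda_i$ are nested partitions;
\item[(b)] each $\Lambda_i/\Lambda_{i+1}$ is a horizontal strip.
\end{itemize}
This is exactly Lemma~\ref{eq:inequality of poly} read backwards, together with the vanishing $x_j^{(i)}=0$ for $i+j>n+1$. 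The latter corresponds to the fact that an entry $\ge i$ cannot appear in a row of index $>n+2-i$, by strictness down columns. The two constructions are visibly mutually inverse.

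\emph{Weight, $\varepsilon_i$ and $\varphi_i$.} Row $k$ of $T_{\overrightarrow{x}}$ satisfies
\[
|T_{\overrightarrow{x}}|_i^{(k)}=\Lambda_i^{(k)}-\Lambda_{i+1}^{(k)}.
\]
Summing over $k$ expresses $\wt(T_{\overrightarrow{x}})=\sum_i|T|_i\epsilon_i$ as $\lambda-\sum x_j^{(i)}\alpha_i$, which is the weight in \eqref{eq:crystal operator}; this is a telescoping computation analogous to \eqref{eq:wtrhoT}.

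For $\varepsilon_i$, we unwind \eqref{eq:crystal on Yrlambda}: $\varepsilon_i(T)=\varphi_{n+1-i}(\phi(T))$. This counts the uncancelled letters $n+1-i$ in the Far-Eastern reading of $\phi(T)$. In terms of $T$ itself, it is the number of unmatched entries $i+1$ when entries $i$ act as their partners. We then express the signature rule through partial sums over rows: the number of unmatched such letters equals a maximum over $j$ of partial sums of the form
\[
\sum_{k\le j}\Bigl(|T|_{i+1}^{(k)}-|T|_{i}^{(k-1)}\Bigr)\text{-type terms},
\]
exactly as in \eqref{eq:epsilonTinfty}. We substitute the row counts $\Lambda_i^{(k)}-\Lambda_{i+1}^{(k)}=x_k^{(i-1)}-x_k^{(i)}+\lambda_{k+i-1}-\lambda_{k+i}$ and telescope. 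We then compare the result, term by term, with the functions $\sigma_k(\overrightarrow{x})$ defining $\varepsilon_i(\overrightarrow{x})$ in the appendix. These include the additional $\lambda$-dependent functions that distinguish $\Sigma_\iota[\lambda]$ from $\Sigma_\iota$; we expect them to correspond to the row-ends of $\lambda$ appearing in the partial sums. The identity for $\varphi_i$ then follows from axiom (i) on both sides, since the weights agree.

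\emph{Operators.} By \eqref{eq:crystal on Yrlambda}, $\tilde f_iT=\phi^{-1}\tilde e_{n+1-i}\phi(T)$. Acting with $\tilde e_{n+1-i}$ on $\phi(T)$ changes one letter $n+2-i$ to $n+1-i$; in $T$ this changes a single entry $i$ to $i+1$ in some row $r$. The effect on the patterns is that $\Lambda_{i+1}^{(r)}$ increases by one and every other $\Lambda$ is unchanged, i.e.\ the coordinate $x_r^{(i)}$ increases by $1$. On the polyhedral side, $\tilde f_i\overrightarrow{x}$ also adds $1$ to one coordinate $x_j^{(i)}$, namely the one at the extremal index of the maximizing set $M^{(i)}$. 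We therefore need to show that the row $r$ picked by the signature rule (the position of the relevant uncancelled letter) is precisely the maximizer $j$ selected in the $\varepsilon_i$-computation above. The vanishing conditions $\tilde f_i T=0\Leftrightarrow\tilde f_i\overrightarrow{x}=0$ follow from the equality of $\varphi_i$.

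We expect this identification, together with the bookkeeping of the $\lambda$-terms and the tie-breaking between smallest and largest maximizing index, to be the main obstacle. Our first attempt will be to read off the correct index directly from the telescoped partial sums, where the maximum is attained at the block containing the changed box. Finally, $\tilde e_i$-compatibility follows from $\tilde f_i$-compatibility and axiom (vi), exactly as in the last paragraph of the proof of Theorem~\ref{thm:psiinfty}.
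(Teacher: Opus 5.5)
Your route is the paper's: an explicit inverse through the shapes $\Lambda_i(T)$, telescoping for $\wt$, matching signature partial sums with the $\sigma_k$, then locating the changed row. Your bijectivity paragraph is in fact more explicit than the paper's. The gap is that the two steps carrying the content of the theorem are only announced, and the one concrete formula you give for them points the wrong way.

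Because $\varepsilon_i(T)=\varphi_{n+1-i}(\phi(T))$, you are counting uncancelled ``$+$'' signs, not uncancelled ``$-$'' signs as in \eqref{eq:epsilonTinfty}. To organize the word row by row, use the Middle-Eastern reading of $\phi(T)$, which gives the same signature. There row $k$ contributes $-^{a_k}+^{b_k}$, with $a_k=|T|_i^{(k)}$ and $b_k=|T|_{i+1}^{(k)}$. So an entry $i+1$ of $T$ in row $k$ can only be cancelled by an entry $i$ in a \emph{lower} row. The correct expression is therefore the suffix form
\[
\varepsilon_i(T)=\max_{j\ge 1}S_j,\qquad S_j:=\sum_{k\ge j}\bigl(|T|_{i+1}^{(k)}-|T|_{i}^{(k+1)}\bigr)\quad(S_j=0\ \text{for large } j),
\]
which is \eqref{eq:phiphiTx}. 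It is not the prefix sum $\sum_{k\le j}(|T|^{(k)}_{i+1}-|T|^{(k-1)}_i)$ you wrote. The prefix version already fails for $n=2$, $\lambda=\omega_2$ at the highest weight element, the column with $2$ over $1$ (corresponding to $\overrightarrow{x}=\overrightarrow{0}$): it gives $1$, while $\varepsilon_1=0$. It is the suffix form that matches $\sigma_{(j-1)n+i}(\overrightarrow{x})$, which involves only coordinates $x_k^{(\cdot)}$ with $k\ge j$.

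With the suffix form, substituting $\Lambda_m^{(k)}=x_k^{(m-1)}+\lambda_{k+m-1}$ gives $S_j=\sigma_{(j-1)n+i}(\overrightarrow{x})$ exactly. The $\lambda$'s cancel completely, so no ``row-ends of $\lambda$'' survive to be matched with anything. The only $\lambda$-dependent function is
\[
\sigma_0^{(i)}(\overrightarrow{x})=-\langle\alpha_i^\vee,\wt(\overrightarrow{x})\rangle=|T|_{i+1}-|T|_i=S_1-|T|_i^{(1)},
\]
and it never strictly wins. This is Lemma~\ref{lem:sigmaigeqsigma0}, which the paper uses to reduce $\varepsilon_i(\overrightarrow{x})$ to $\sigma^{(i)}(\overrightarrow{x})$. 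You need it, or this one-line observation, stated explicitly.

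Once $S_j=\sigma_{(j-1)n+i}$ is in hand, the signature rule settles the tie-breaking you flag:
\begin{itemize}
\item The rightmost uncancelled ``$-$'' of $\phi(T)$ is the letter changed by $\tilde e_{n+1-i}$, hence by $\tilde f_i$ on $T$. It lies in the row of the \emph{smallest} maximizer of $S_j$, which corresponds to $\min M^{(i)}(\overrightarrow{x})$.
\item The leftmost uncancelled ``$+$'' lies in the row of the \emph{largest} maximizer, which corresponds to $\max M^{(i)}(\overrightarrow{x})$, as in the paper.
\end{itemize}
Until these identifications are actually carried out, the proposal is an outline rather than a proof.
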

\begin{proof}
For a given $\lambda\in P^+$, it is straightforward to verify that $\psi_\lambda$ is a one-to-one map. We will first show that $\psi_\lambda$ preserves the weight map $\wt$.

\vskip 1mm

By the definition of $\wt$ in \eqref{eq:crystal operator}, we have 
\begin{equation}\label{eq:wtx=wtYx}
\begin{aligned}
&\wt(\overrightarrow{x})\\
=&\lambda-\sum_{1<i+j\leq n+1}x_j^{(i)}\alpha_{i_{(j-1)n+i}}
=\sum_{i=1}^n\lambda_i\epsilon_i-\sum_{i=1}^n(\sum_{j=1}^{n+1-i}x_j^{(i)}(\epsilon_i-\epsilon_{i+1}))\\
=&\sum_{i=1}^n(\lambda_i-\sum_{j=1}^{n+1-i}x_j^{(i)})\epsilon_i+\sum_{i=2}^{n+1}\sum_{j=1}^{n+2-i}x_j^{(i-1)}\epsilon_i\\
=&\sum_{i=1}^{n+1}((\lambda_i-\sum_{j=1}^{n+1-i}x_j^{(i)})+\sum_{j=1}^{n+2-i}x_j^{(i-1)})\epsilon_i\\
=&\sum_{i=1}^{n+1}|\Lambda_i(\overrightarrow{x})/\Lambda_{i+1}(\overrightarrow{x})|\epsilon_i=\wt(T_{\overrightarrow{x}}).
\end{aligned}
\end{equation}

Next, we will show that $\varepsilon_i(\overrightarrow{x})=\varepsilon_i(T_{\overrightarrow{x}})$. By Lemma \ref{lem:sigmaigeqsigma0}, it suffices to consider the case  $\sigma^{(i)}(\overrightarrow{x})\geq\sigma_0^{(i)}(\overrightarrow{x})$. In this case, we have $\varepsilon_i(\overrightarrow{x})=\sigma^{(i)}(\overrightarrow{x})$.

\vskip 1mm

By applying  the Middle-Eastern reading to the reverse Young tableau $T_{\overrightarrow{x}}$ (cf. \cite[Definition 7.3.4]{HK02}), we obtain the following sequence of boxes:
\begin{equation}\label{eq:MERT}
\underbrace{(b_1b_2\cdots b_{\lambda_1})}_{\text{$1$-th row}}
\underbrace{(b_{\lambda_1+1}\cdots b_{\lambda_1+\lambda_2})}_{\text{$2$-th row}}
\cdots 
\underbrace{(b_{\sum_{k=1}^{j-1}\lambda_{k}+1}\cdots b_{\sum_{k=1}^{j}\lambda_{k}})}_{\text{$j$-th row}}
\cdots
\underbrace{(b_{\sum_{k=1}^{n-1}\lambda_{k}+1}\cdots b_{\sum_{k=1}^{n}\lambda_{k}})}_{\text{$n$-th row}},
\end{equation}
where the subscript indicates the row of $T_{\overrightarrow{x}}$ in which the box $b_i$ is located.

\vskip 1mm

In the sequence of boxes \eqref{eq:MERT}, we  count  the number of $i$-boxes and $(i+1)$-boxes in each row, and apply the map $\phi$ to obtain the following sequences:
\begin{equation}\label{eq:sequence iandi+1}
\begin{aligned}
\underbrace{
\underbrace{|\Lambda_{i}^{(1)}/\Lambda_{i+1}^{(1)}|}_{i\text{-box}},\ 
\underbrace{|\Lambda_{i+1}^{(1)}/\Lambda_{i+2}^{(1)}|}_{(i+1)\text{-box}}}_{\text{$1$-th row}}
\cdots
&\underbrace{
\underbrace{|\Lambda_{i}^{(j)}/\Lambda_{i+1}^{(j)}|}_{i\text{-box}},\ 
\underbrace{|\Lambda_{i+1}^{(j)}/\Lambda_{i+2}^{(j)}|}_{(i+1)\text{-box}}}_{\text{$j$-th row}}
\cdots
\underbrace{
\underbrace{|\Lambda_{i}^{(n)}/\Lambda_{i+1}^{(n)}|}_{i\text{-box}},\ 
\underbrace{|\Lambda_{i+1}^{(n)}/\Lambda_{i+2}^{(n)}|}_{(i+1)\text{-box}}}_{\text{$n$-th row}}\\
&\qquad\qquad\ \ \Big\downarrow\phi\\
\underbrace{
\underbrace{|\Lambda_{i}^{(1)}/\Lambda_{i+1}^{(1)}|}_{(n+2-i)\text{-box}},\ 
\underbrace{|\Lambda_{i+1}^{(1)}/\Lambda_{i+2}^{(1)}|}_{(n+1-i)\text{-box}}}_{\text{$1$-th row}}
\cdots
&\underbrace{
\underbrace{|\Lambda_{i}^{(j)}/\Lambda_{i+1}^{(j)}|}_{(n+2-i)\text{-box}},\ 
\underbrace{|\Lambda_{i+1}^{(j)}/\Lambda_{i+2}^{(j)}|}_{(n+1-i)\text{-box}}}_{\text{$j$-th row}}
\cdots
\underbrace{
\underbrace{|\Lambda_{i}^{(n)}/\Lambda_{i+1}^{(n)}|}_{(n+2-i)\text{-box}},\ 
\underbrace{|\Lambda_{i+1}^{(n)}/\Lambda_{i+2}^{(n)}|}_{(n+1-i)\text{-box}}}_{\text{$n$-th row}}
\end{aligned}
\end{equation}

By \cite[Theorem 7.3.6]{HK02}, the Far-Eastern and Middle-Eastern readings of Young tableaux induce the same crystal structure. Therefore, based on the number of $(n+2-i)$-blocks and $(n+1-i)$-blocks in
the second sequence of \eqref{eq:sequence iandi+1}, we conclude that
\begin{equation}\label{eq:phiphiTx}
\varphi_{n+1-i}(\phi(T_{\overrightarrow{x}}))=\max_{1\leq j\leq n}\{\sum_{k=j}^n(|\Lambda_{i+1}^{(k)}/\Lambda_{i+2}^{(k)}|-|\Lambda_{i}^{(k+1)}/\Lambda_{i+1}^{(k+1)}|)\}
\end{equation}

By Lemma \ref{lem:sigmaixvalue}, we obtain
	\begin{align*}
&\varepsilon_i(T_{\overrightarrow{x}})=\varphi_{n+1-i}(\phi(T_{\overrightarrow{x}}))
=\max_{1\leq j\leq n}\{\Lambda_{i+1}^{(j)}+2\sum_{k=j+1}^n\Lambda_{i+1}^{(k)}-\sum_{k=j}^{n-1}\Lambda_{i+2}^{(k)}-\sum_{k=j}^{n-1}\Lambda_i^{(k+1)}\}\\
=&\max_{1\leq j\leq n}\{\Lambda_{i+1}^{(j)}+2\sum_{k=j+1}^{\min(n,n+1-i)}\Lambda_{i+1}^{(k)}-\sum_{k=j}^{\min(n-1,n-i)}\Lambda_{i+2}^{(k)}-\sum_{k=j}^{\min(n-1,n+1-i)}\Lambda_i^{(k+1)}\}\\
=&\max_{1\leq j\leq n}\{\Lambda_{i+1}^{(j)}+2\sum_{k=j+1}^{n+1-i}\Lambda_{i+1}^{(k)}-\sum_{k=j}^{n-i}\Lambda_{i+2}^{(k)}-\sum_{k=j}^{\min(n-1,n+1-i)}\Lambda_i^{(k+1)}\}\\
=&\max_{1\leq j\leq n}\{x_j^{(i)}+\lambda_{j+i}+2(\sum_{k=j+1}^{n+1-i}x_k^{(i)}+\lambda_{k+i})-(\sum_{k=j}^{n-i}x_k^{(i+1)}+\lambda_{i+k+1})\\
&\qquad -(\sum_{k=j}^{\min(n-1,n+1-i)}x_{k+1}^{(i-1)}+\lambda_{i+k})\}\\
=&\max_{1\leq j\leq n}\{x_j^{(i)}+2\sum_{k=j+1}^{n+1-i}x_k^{(i)}-\sum_{k=j}^{n-i}x_k^{(i+1)}-\sum_{k=j}^{n+1-i}x_{k+1}^{(i-1)}\}\\
=&\max_{1\leq j\leq n}\{x_j^{(i)}-x_j^{(i+1)}+2\sum_{k=j+1}^{n+1-i}x_k^{(i)}-\sum_{k=j+1}^{n-i}x_k^{(i+1)}-\sum_{k=j+1}^{n+2-i}x_{k}^{(i-1)}\}
=\sigma^{(i)}(\overrightarrow{x})=\varepsilon_i(\overrightarrow{x}).
\end{align*}

\vskip 1mm

We now proceed to verify that  $T_{\tilde{e}_i\overrightarrow{x}}=\tilde{e}_iT_{\overrightarrow{x}}$ for all $i\in I$. 

\vskip 1mm

If $ \sigma^{(i)}(\overrightarrow{x})\leq 0$, then we obtain $\tilde{e}_i\overrightarrow{x}=\mathbf 0$ and 
 $$
 \varphi_{n+1-i}(\phi(T_{\overrightarrow{x}}))=\varepsilon_i(T_{\overrightarrow{x}})=\varepsilon_i(\overrightarrow{x})=\sigma^{(i)}(\overrightarrow{x})\leq 0.
 $$
Therefore, we have $\tilde{e}_iT_{\overrightarrow{x}}=\phi^{-1}(\tilde{f}_{n+1-i}\phi(T_{\overrightarrow{x}}))=\mathbf 0=T_{\tilde{e}_i\overrightarrow{x}}$.

\vskip 2mm

If  $ \sigma^{(i)}(\overrightarrow{x})>0$, we assume that
$\tilde{e}_i\overrightarrow{x}=\overrightarrow{x}-\overrightarrow{e_l}$.
By the definition of $\tilde{e}_i$ in \eqref{eq:crystal operator}, we obtain 
\begin{equation}\label{eq:l=max}
l=\max M^{(i)}(\overrightarrow{x})=\{k\mid i_k=i,\ \sigma_k(\overrightarrow{x})=\sigma^{(i)}(\overrightarrow{x})\}.
\end{equation}

It follows that $l=(v-1)n+i$ for some $v\geq 1$.  Then, we have 
\begin{equation}\label{eq:Lambdaex}
	\Lambda_j(\tilde{e}_i\overrightarrow{x})=
	\begin{cases}
		\Lambda_j(\overrightarrow{x})&\text{if}\ j\neq i+1,\\
		\Lambda_{j}(\overrightarrow{x})-(\delta_{v,k})_{1\leq k\leq n+1-i}&\text{if}\ j=i+1.
\end{cases}
\end{equation}

By applying the expression of $\Lambda_j(\tilde{e}_i\overrightarrow{x})$ in  \eqref{eq:Lambdaex}, we deduce that
\begin{equation}\label{eq:echange}
|T_{\tilde{e}_i\overrightarrow{x}}|_{i+1}^{(v)}-|T_{\overrightarrow{x}}|_{i+1}^{(v)}=-1,\quad |T_{\tilde{e}_i\overrightarrow{x}}|_{i}^{(v)}-|T_{\overrightarrow{x}}|_{i}^{(v)}=1,\quad
|T_{\tilde{e}_i\overrightarrow{x}}|_{m}^{(u)}-|T_{\overrightarrow{x}}|_{m}^{(u)}=0
\end{equation}	
for $m\neq i, i+1$.

\vskip 1mm

By the formula in \eqref{eq:phiphiTx}, we obtain
\begin{equation*}
\sigma_{(j-1)n+i}(\overrightarrow{x})=\sum_{k=j}^n(|\Lambda_{i+1}^{(k)}/\Lambda_{i+2}^{(k)}|-|\Lambda_{i}^{(k+1)}/\Lambda_{i+1}^{(k+1)}|).
\end{equation*}

From \eqref{eq:l=max}, it follows that \( v \) is the maximum number satisfying

\begin{equation*}
\sum_{k=v}^n(|\Lambda_{i+1}^{(k)}/\Lambda_{i+2}^{(k)}|-|\Lambda_{i}^{(k+1)}/\Lambda_{i+1}^{(k+1)}|)=\varphi_{n+1-i}(\phi(T_{\overrightarrow{x}})).
\end{equation*}

Therefore, the operator \( \tilde{f}_{n+1-i} \) acts on \( \phi(T_{\overrightarrow{x}}) \) by replacing the entry \( n+1-i \) with \( n+2-i \) in the \( v \)-th row. This implies that the operator \( \tilde{e}_i \) acts on \( T_{\overrightarrow{x}} \) by replacing the entry \( i+1 \) with \( i \) in the  \( v \)-th row. By \eqref{eq:echange}, we conclude that \( \tilde{e}_i T_{\overrightarrow{x}} = T_{\tilde{e}_i \overrightarrow{x}} \).

\vskip 1mm

Similarly, one can show that \( \tilde{f}_i T_{\overrightarrow{x}} = T_{\tilde{f}_i \overrightarrow{x}} \) by applying the same argument.
\end{proof}

\section{Crystal structure on the set of Gelfand–Tsetlin patterns}\label{sec:CGTP}

Let $\lambda=\sum_{i=1}^n\lambda_i\epsilon_i\in P^+$. We set $\lambda_{n+1}=0$ and $y_k^{(0)}=\lambda_k$ for all $1\leq k\leq n+1$.

\vskip 1mm

We now consider the following set of nonnegative integers:
\begin{equation}\label{eq: GT}
\mathcal {GT}_{\lambda}=\{y_k^{(i)}\in\mathbb Z_{\geq 0}\mid y_k^{(i-1)}\geq y_k^{(i)}\geq y_{k+1}^{(i-1)}, \ 1\leq i\leq n,\ 1\leq k\leq n+1-i \}.
\end{equation}
The elements of $\mathcal {GT}_{\lambda}$ correspond precisely to the Gelfand–Tsetlin patterns and are characterized by the following system of integers

\begin{equation}\label{eq:gtpattern}
\begin{aligned}
\lambda_1&\ \qquad\ \ \  \lambda_2&&\qquad \lambda_3&& \cdots\ \ \lambda_k&&\qquad \lambda_{k+1}&&\ \cdots\ \ \lambda_{n-1}&&\qquad \lambda_n && \qquad\! 0\\
&\ \ y_1^{(1)} &&y_2^{(1)} &&\! \!  y_3^{(1)}\ \ \cdots &&\ y_k^{(1)} && y_{k+1}^{(1)}\ \ \cdots && y_{n-1}^{(1)} &&\! \!\! y_n^{(1)}\\
&\ \qquad\ \ y_1^{(2)} &&\quad\ \  y_2^{(2)} && \quad \ \ y_3^{(2)} && \cdots \quad\! y_k^{(2)} && \cdots \quad y_{n-2}^{(2)} &&\quad \ \ \ y_{n-1}^{(2)}\\
&\qquad\quad\quad\ \ \ \begin{rotate}{125}$\cdots$\end{rotate} &&  \empty &&\begin{rotate}{125}$\cdots$\end{rotate} &&  \begin{rotate}{125}$\cdots$\end{rotate} &&\!\!\!\!\!\!\!\!\!\! \!\!\!\!  \vdots && \!\!\!\!\!\!\!\!\!\! \!\!\!\!\!\! \!\!\!\! \begin{rotate}{45}$\cdots$\end{rotate} && \!\!\!\!\!\!\!\!\!\! \!\!\!\!\!\! \!\!\!\! \begin{rotate}{45}$\cdots$\end{rotate}\\
&\qquad\quad\quad\ \begin{rotate}{125}$\empty$\end{rotate} &&  \empty &&\begin{rotate}{125}$\empty$\end{rotate} &&\!\!\!\!\!\!\!\!\!\! \!  y_1^{(n-1)}&&\!\!\!\!\!\!\!\!\!\! \!  y_2^{(n-1)}&& \!\!\!\!\!\!\!\!\!\! \!\!\!\!\!\! \!\!\!\! \begin{rotate}{45}$\empty$\end{rotate} && \!\!\!\!\!\!\!\!\!\! \!\!\!\!\!\! \!\!\!\! \begin{rotate}{45}$\empty$\end{rotate}\\
&\qquad\quad\quad\ \begin{rotate}{125}$\empty$\end{rotate} &&  \empty &&\begin{rotate}{125}$\empty$\end{rotate} &&\  y_1^{(n)}&&\!\!\!\!\!\!\!\!\!\! \!  \empty&& \!\!\!\!\!\!\!\!\!\! \!\!\!\!\!\! \!\!\!\! \begin{rotate}{45}$\empty$\end{rotate} && \!\!\!\!\!\!\!\!\!\! \!\!\!\!\!\! \!\!\!\! \begin{rotate}{45}$\empty$\end{rotate}\\
\end{aligned}
\end{equation}
where the local configuration \( \begin{array}{cc} a & b \\ \multicolumn{2}{c}{c} \end{array} \)
is subject to the interlacing inequality \(a\ge c \ge b\).

\vskip 2mm

We define the map
\begin{equation}\label{eq:GTtoPoly}
\begin{aligned}
\varsigma_\lambda: \mathcal {GT}_\lambda\longrightarrow \Sigma_\iota[\lambda],\quad g\mapsto \overrightarrow{x}
\end{aligned}
\end{equation}
by setting 
\begin{equation}
x_k^{(i)}:=y_k^{(i)}-\lambda_{k+i} \text{ for $1\leq i\leq n$ and $1\leq k\leq n+1-i$}.
\end{equation}
Here, \( \{y_k^{(i)}\} \) are the entries of a Gelfand–Tsetlin pattern \( g \in \mathcal{GT}_\lambda \), and this map assigns to each pattern \( g \) an element \( \overrightarrow{x} \in \Sigma_\iota[\lambda] \).

\vskip 1mm

It follows from \eqref{eq: GT} that $\varsigma_\lambda$ is a one-to-one map.
Furthermore, it is easy to check that the composition map $\psi_\lambda\circ \varsigma_\lambda:
\mathcal {GT}_\lambda\stackrel{\varsigma_\lambda}{\longrightarrow} \Sigma_\iota[\lambda] \stackrel{\psi_\lambda}{\longrightarrow} \mathcal T'(\lambda)
$
is a bijection.
The inverse of $\psi_\lambda$ can be realized via Gelfand–Tsetlin patterns:

Given a RSSYT $T\in\mathcal T'(\lambda)$, let $g_T$ be its associated
Gelfand–Tsetlin pattern, then $\psi_\lambda^{-1}(T)=\varsigma_\lambda(g_T)$. The explicit formula is given by
\begin{equation*}
x_k^{(i)}:=(\sum_{j=i+1}^{n+2-k}|T|_j^{(k)})-\lambda_{k+i} \text{ for $1\leq i\leq n$ and $1\leq k\leq n+1-i$}.
\end{equation*}

\vskip 1mm

For any $g\in \mathcal{GT}_\lambda$, $i\in I$ and $j\in \{0\}\cup I$, we define 
\begin{align}
\sigma^{(i)}_j(g)&=y_j^{(i)}+2\sum_{k=j+1}^{n+1-i} y_k^{(i)}-\sum_{k=j}^{n-i}y_k^{(i+1)}-\sum_{k=j+1}^{n+2-i}y_k^{(i-1)},\label{eq:sigmaji}\\
M^{(i)}(g)&=\{j\mid \sigma_j^{(i)}(g)=\max_{j\in I}\{\sigma_j^{(i)}(g)\}\},
\label{eq:Mig}
\end{align}
where $y_0^{(i)}=0$ for all $i\in I$.

\begin{Thm}\label{thm:crystalGT}
The crystal structure on $\Sigma_\iota[\lambda]$ and $\mathcal T'(\lambda)$ induces a crystal structure on $\mathcal{GT}_\lambda$ as follows:

For any $g\in \mathcal{GT}_\lambda$,
\begin{equation}
\begin{aligned}
\mathrm{wt}(g)&=\sum_{i=1}^{n+1}\sum_{j=1}^{n+2-i}(y_j^{(i-1)}-y_j^{(i)})\epsilon_i,\\
\varepsilon_i(g)&=\max_{j\in I} \{
\sigma^{(i)}_j(g)\},\quad \varphi_i(g)=\langle\alpha_i^\vee,\mathrm{wt}(g) \rangle+\varepsilon_i(g),\\
\tilde{f}_ig&=
\begin{cases}
g+\delta_{v,\min M^{(i)}(g)}\overrightarrow{e}_{i+1,v}&\text{if} \ \max_{j\in I}\{\sigma_j^{(i)}(g)\}>\sigma_0^{(i)}(g),\\
\mathbf 0&\text{otherwise,}
\end{cases}
\\
\tilde{e}_ig&=
\begin{cases}
	g-\delta_{v,\max M^{(i)}(g)}\overrightarrow{e}_{i+1,v}&\text{if} \ \max_{j\in I}\{\sigma_j^{(i)}(g)\}>0,\\
\mathbf	0&\text{otherwise,}
\end{cases}\\
\end{aligned}
\end{equation}
where $g\pm\overrightarrow{e}_{i+1,v}$  denotes the Gelfand-Tsetlin pattern  obtained form $g$ by replacing $y^{(i)}_v$ with $y^{(i)}_v\pm 1$.
\end{Thm}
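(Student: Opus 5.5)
The plan is to transport the crystal structure of $\Sigma_\iota[\lambda]$ (Theorem \ref{thm:polyhedral A}) along the bijection $\varsigma_\lambda$. We define the crystal data on $\mathcal{GT}_\lambda$ by $\wt(g):=\wt(\varsigma_\lambda g)$, $\varepsilon_i(g):=\varepsilon_i(\varsigma_\lambda g)$, $\tilde f_i g:=\varsigma_\lambda^{-1}\tilde f_i\varsigma_\lambda g$, and so on. Then $\mathcal{GT}_\lambda$ is automatically a crystal isomorphic to $\mathcal B(\lambda)$. Composing with Theorem \ref{thm: psilambda}, the map $\psi_\lambda\circ\varsigma_\lambda$ is also a crystal isomorphism onto $\mathcal T'(\lambda)$. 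The whole content of the theorem is therefore that the transported data agree with the explicit formulas displayed. We check this datum by datum via the substitution $x_k^{(i)}=y_k^{(i)}-\lambda_{k+i}$.

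\textbf{Weight.} By \eqref{eq:wtx=wtYx}, $\wt(\overrightarrow{x})=\sum_i|\Lambda_i/\Lambda_{i+1}|\epsilon_i$. Since $\Lambda_i(\overrightarrow{x})^{(k)}=x_k^{(i-1)}+\lambda_{k+i-1}=y_k^{(i-1)}$ (using $y^{(0)}_k=\lambda_k$), we get $|\Lambda_i/\Lambda_{i+1}|=\sum_j(y_j^{(i-1)}-y_j^{(i)})$. This is the stated formula, and $\varphi_i$ then follows from axiom (i).

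\textbf{The functions $\sigma$.} Substitute $x=y-\lambda$ into the expression for $\sigma_{(j-1)n+i}(\overrightarrow{x})$ from Lemma \ref{lem:sigmaixvalue}, as in the proof of Theorem \ref{thm: psilambda}. The $\lambda$-terms telescope and cancel exactly as in the chain of equalities computing $\varepsilon_i(T_{\overrightarrow{x}})$, where the $\Lambda$'s were precisely the $y$'s. This gives $\sigma_{(j-1)n+i}(\overrightarrow{x})=\sigma_j^{(i)}(g)$ as in \eqref{eq:sigmaji} for $j\ge1$. Likewise $\sigma_0^{(i)}(\overrightarrow{x})$ (the $\lambda$-dependent term from \eqref{eq:crystal operator}) matches $\sigma_0^{(i)}(g)$ with $y_0^{(i)}=0$. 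Once this is established, $\varepsilon_i(g)=\max_j\sigma_j^{(i)}(g)$ follows, using Lemma \ref{lem:sigmaigeqsigma0} to discard $\sigma_0$ exactly as in Theorem \ref{thm: psilambda}. The index sets $M^{(i)}$ also correspond: the position $k=(j-1)n+i$ in $\overrightarrow{x}$ corresponds to the entry $y_j^{(i)}$.

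\textbf{Operators.} In the polyhedral realization, $\tilde e_i\overrightarrow{x}=\overrightarrow{x}-\overrightarrow{e}_l$ with $l=\max M^{(i)}(\overrightarrow{x})=(v-1)n+i$, and $\tilde f_i$ adds $\overrightarrow{e}_l$ at $l=\min M^{(i)}$. The zero conditions are $\varepsilon_i>0$ and $\max\sigma>\sigma_0$, respectively. Under $\varsigma_\lambda$ the change $x_v^{(i)}\mapsto x_v^{(i)}\pm1$ is the change $y_v^{(i)}\mapsto y_v^{(i)}\pm1$, which gives the displayed formulas. This agrees with \eqref{eq:Lambdaex}, where only the row $\Lambda_{i+1}$ of the pattern changes. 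The ordering convention needs care: $\iota$ is read right to left, so larger $k$ means larger $j$. We therefore have to check that $\max M^{(i)}(\overrightarrow{x})$ corresponds to $\max M^{(i)}(g)$ rather than its minimum; this follows because $k\mapsto j$ is increasing.

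The main obstacle is the bookkeeping in the $\sigma$ substitution. We must confirm that every $\lambda$-contribution cancels, including the boundary ranges where $\min(n,n+1-i)$ truncations and $x_j^{(i)}=0$ for $i+j>n+1$ interact with the conventions $y_{n+2-i}^{(i-1)}=\lambda_{n+1}=0$. We also need to get the $\sigma_0$ term exactly right, since it governs when $\tilde f_i g=\mathbf 0$. We would first redo the displayed computation in the proof of Theorem \ref{thm: psilambda} verbatim with $\Lambda_{i}^{(k)}$ replaced by $y_k^{(i-1)}$, and then treat $j=0$ separately from the definition in \eqref{eq:sigmai and Mi}.
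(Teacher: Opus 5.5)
Your proposal is correct and takes essentially the same route as the paper: you transport the structure along $\varsigma_\lambda$ and verify $\mathrm{wt}$ and $\varepsilon_i$ by substituting $x_k^{(i)}=y_k^{(i)}-\lambda_{k+i}$ into Lemma~\ref{lem:sigmaixvalue}, where the $\lambda$-terms cancel and Lemma~\ref{lem:sigmaigeqsigma0} removes $\sigma_0^{(i)}$. You then read off $\tilde e_i,\tilde f_i$ from the correspondence $\max M^{(i)}(\overrightarrow{x})=(\max M^{(i)}(g)-1)n+i$, and your explicit check that $\sigma_0^{(i)}(\overrightarrow{x})=\sigma_0^{(i)}(g)$, which governs when $\tilde f_i g=\mathbf 0$, fills in a step the paper covers only by ``a similar argument''.
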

\begin{proof}
It is sufficient to prove that the following equalities hold
\begin{equation}\label{eq:wteef}
\begin{aligned}
\mathrm{wt}(\varsigma_\lambda(g))&=\mathrm{wt}(g), \quad \varepsilon_i(\varsigma_\lambda(g))=\varepsilon_i(g),\\ \tilde{e}_i(\varsigma_\lambda(g))&=\varsigma_\lambda(\tilde{e}_ig),\quad 
\tilde{f}_i(\varsigma_\lambda(g))=\varsigma_\lambda(\tilde{f}_ig).
\end{aligned}
\end{equation}

(1) From \eqref{eq:wtx=wtYx}, we obtain
\begin{align*}
&\mathrm{wt}(\varsigma_\lambda(g))\\
=&\sum_{i=1}^{n+1}(\lambda_i-\sum_{j=1}^{n+1-i}x_j^{(i)}+\sum_{j=1}^{n+2-i}x_j^{(i-1)})\epsilon_i\\
=&\sum_{i=1}^{n+1}(\lambda_i-\sum_{j=1}^{n+1-i}(y_j^{(i)}-\lambda_{i+j})+\sum_{j=1}^{n+2-i}(y_j^{(i-1)}-\lambda_{i+j-1}))\epsilon_i\\
=&\sum_{i=1}^{n+1}(\sum_{j=1}^{n+2-i}y_j^{(i-1)}-\sum_{j=1}^{n+1-i}y_j^{(i)})\epsilon_i=\mathrm{wt}(g).
\end{align*}

(2) By \eqref{eq:sigma0i} and Lemma \ref{lem:sigmaixvalue} , we have
\begin{align*}
&\varepsilon_i(\varsigma_\lambda(g))\\
=&\sigma^{(i)}(\overrightarrow{x})=\max_{1\leq j\leq n}\{x_j^{(i)}-x_j^{(i+1)}+2\sum_{k=j+1}^{n+1-i}x_k^{(i)}-\sum_{k=j+1}^{n-i}x_k^{(i+1)}-\sum_{k=j+1}^{n+2-i}x_{k}^{(i-1)}\}\\
=&\max_{1\leq j\leq n}\{
y_j^{(i)}-y_j^{(i+1)}+\lambda_{i+j+1}-\lambda_{i+j}+2\sum_{k=j+1}^{n+1-i}(y_k^{(i)}-\lambda_{k+i})\\
&\qquad -\sum_{k=j+1}^{n-i}(y_k^{(i+1)}-\lambda_{i+k+1})-\sum_{k=j+1}^{n+2-i}(y_k^{(i-1)}-\lambda_{k+i-1})\}\\
=&\max_{1\leq j\leq n} \{y_j^{(i)}-y_j^{(i+1)}+\lambda_{i+j+1}-\lambda_{i+j}+2\sum_{k=j+1}^{n+1-i}y_k^{(i)} -\sum_{k=j+1}^{n-i}y_k^{(i+1)}\\
&\qquad -\sum_{k=j+1}^{n+2-i}y_k^{(i-1)}+\sum_{k=j+2}^{n+1-i}\lambda_{i+k}+\sum_{k=j}^{n+1-i}\lambda_{k+i}-2\sum_{k=j+1}^{n+1-i}\lambda_{k+i}\}
\\
=&\max_{1\leq j\leq n} \{y_j^{(i)}+2\sum_{k=j+1}^{n+1-i}y_k^{(i)}-\sum_{k=j}^{n-i}y_k^{(i+1)}-\sum_{k=j+1}^{n+2-i}y_k^{(i-1)} \}=\varepsilon_i(g).
\end{align*}

(3) Based on (2), it follows that
\begin{equation}\label{eq:maxjIsigmajig}
\max_{j\in I}\{\sigma_j^{(i)}(g)\}=\sigma^{(i)}(\overrightarrow{x})=\sigma^{(i)}(\varsigma_\lambda(g)).
\end{equation}

By comparing the definitions of \( M^{(i)}(\overrightarrow{x}) \) in \eqref{eq:sigmai and Mi} and \( M^{(i)}(g) \) in \eqref{eq:Mig}, we can conclude that
\[
\max M^{(i)}(\overrightarrow{x}) = (\max M^{(i)}(g) - 1)n + i.
\]

If condition \eqref{eq:maxjIsigmajig} holds, then we have $\tilde{e}_ig=g-\overrightarrow{e}_{i+1,v}$, where $v=\max M^{(i)}(g)$.

This implies that $\tilde{e}_i$ transforms $y_v^{(i)}$ in $g$ to $y_v^{(i)}-1$, while leaving all other entries unchanged. Therefore, 
$\tilde{e}_i$ acts on $\varsigma_\lambda(g)$ by transforming $x_v^{(i)}$ to $x_v^{(i)}-1$, while preserving the values of all other components.
By \eqref{eq:crystal operator}, we obtain $\tilde{e}_i(\varsigma_\lambda(g))=\varsigma_\lambda(g)-\overrightarrow{e}_{(v-1)n+i}=\varsigma_\lambda(\tilde{e}_ig)$.

\vskip 1mm

The  conclusion $\tilde{f}_i(\varsigma_\lambda(g))=\varsigma_\lambda(\tilde{f}_ig)$ can be proved by a similar argument. Therefore, the formulas in \eqref{eq:wteef} holds, and the proof is complete.
\end{proof}

By Propositions \ref{prop:crystal-Yprime}, \ref{prop:rholambda} and
Theorems \ref{thm: psilambda}, \ref{thm:crystalGT}, we obtain the following corollary.

\begin{Cor}[Involution on the polyhedral model]
	Define
	\[
	\rho_\lambda^{\mathrm{poly}}
	:=\ \psi_\lambda^{-1}\ \circ\ \phi^{-1}\ \circ\ \evac\ \circ\ \phi\ \circ\ \psi_\lambda
	\ :\ \Sigma_\iota[\lambda]\ \longrightarrow\ \Sigma_\iota[\lambda],
	\]
	where $\evac(T)=\jdt\!\bigl((\phi^{-1}(T))^{\#}\bigr)$ is the Schützenberger involution on
	$\mathcal T(\lambda)$.
	Then $\rho_\lambda^{\mathrm{poly}}$ is an involution. Moreover,
	\[
	\rho_\lambda^{\mathrm{poly}}\circ \tilde f_i
	=\tilde e_{\,n+1-i}\circ \rho_\lambda^{\mathrm{poly}},\quad
	\wt\bigl(\rho_\lambda^{\mathrm{poly}}(\vec x)\bigr)
	=\tau\bigl(\wt(\vec x)\bigr).
	\]
\end{Cor}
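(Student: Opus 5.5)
The plan is to reduce every assertion to properties of $\rho_\lambda$ established in Proposition~\ref{prop:rholambda}, and then transport them along the bijections $\phi$ and $\psi_\lambda$. The maps are read right to left, so $\psi_\lambda:\Sigma_\iota[\lambda]\to\mathcal T'(\lambda)$ is applied first. By the Remark following Proposition~\ref{prop:rholambda}, $\evac=\rho_\lambda$ on $\mathcal T(\lambda)$, so I will work with $\rho_\lambda$ throughout.

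First set $R:=\phi^{-1}\circ\rho_\lambda\circ\phi$ on $\mathcal T'(\lambda)$, so that $\rho_\lambda^{\mathrm{poly}}=\psi_\lambda^{-1}\circ R\circ\psi_\lambda$.

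\emph{Involutivity.} Since $\rho_\lambda^2=\mathrm{id}$ by Proposition~\ref{prop:rholambda}, we get $R^2=\phi^{-1}\rho_\lambda^2\phi=\mathrm{id}$. The same conjugation argument then gives $(\rho_\lambda^{\mathrm{poly}})^2=\mathrm{id}$.

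\emph{Intertwining.} The proof of Proposition~\ref{prop:rholambda} gives $\rho_\lambda\tilde f_j=\tilde e_{n+1-j}\rho_\lambda$ on $\mathcal T(\lambda)$, with the convention that $\rho_\lambda(0)=0$. I will first derive the companion identity $\rho_\lambda\tilde e_j=\tilde f_{n+1-j}\rho_\lambda$. Apply the first identity to $\tilde e_jT$ with $j$ replaced by $n+1-j$, use axiom (vi) of Definition~\ref{def:abstract crystal}, and use $\rho_\lambda^2=\mathrm{id}$; one should check separately the case $\tilde e_jT=0$, via the $\varepsilon/\varphi$ counts. Then, using $\phi\tilde f_i'=\tilde e_{n+1-i}\phi$ and $\phi\tilde e_i'=\tilde f_{n+1-i}\phi$ from \eqref{eq:crystal on Yrlambda}, compute
\[
R\tilde f_i'=\phi^{-1}\rho_\lambda\tilde e_{n+1-i}\phi=\phi^{-1}\tilde f_i\rho_\lambda\phi=\bigl(\phi^{-1}\tilde f_i\phi\bigr)R=\tilde e_{n+1-i}'R .
\]
Since $\psi_\lambda$ is a crystal isomorphism (Theorem~\ref{thm: psilambda}), conjugating by it yields $\rho_\lambda^{\mathrm{poly}}\tilde f_i=\tilde e_{n+1-i}\rho_\lambda^{\mathrm{poly}}$ on $\Sigma_\iota[\lambda]$.

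\emph{Weight.} On $\mathcal T(\lambda)$, the intertwining identity combined with $\rho_\lambda(T^\lambda)=T_\lambda$ shows that $\wt(\rho_\lambda T)=w_0\wt(T)$. Indeed, $\wt(T^\lambda)=\lambda$ and $\wt(T_\lambda)=w_0\lambda$, and each $\tilde f_j$ applied to $T$ is matched by an $\tilde e_{n+1-j}$ applied to $\rho_\lambda T$, which shifts the weight by $\alpha_{n+1-j}=-w_0\alpha_j$. Next, Proposition~\ref{prop:crystal-Yprime} says that $\phi$ acts on weights as $w_0$, i.e.\ $\epsilon_j\mapsto\epsilon_{n+2-j}$. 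Hence $\wt(RT)=w_0^{3}\wt(T)=w_0\wt(T)$, and $\psi_\lambda$ preserves weights. For $\vec x$ with $\wt(\vec x)=\lambda-\sum_i c_i\alpha_i$ this gives
\[
\wt\bigl(\rho_\lambda^{\mathrm{poly}}\vec x\bigr)=w_0\lambda+\sum_i c_i\alpha_{n+1-i}.
\]

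The main obstacle is matching this formula with the stated $\tau(\wt(\vec x))$. The paper defines $\tau$ only on simple roots, so I would first fix the convention under which the identity holds. Extending $\tau$ by $\tau=-w_0$ on $P$ gives $-w_0\lambda-\sum_i c_i\alpha_{n+1-i}$, which is the negative of the formula above. So I would interpret $\tau$ on $P$ as the linear map $w_0$, i.e.\ $\epsilon_j\mapsto\epsilon_{n+2-j}$, in accordance with Proposition~\ref{prop:crystal-Yprime}, and state the weight identity as $\wt(\rho_\lambda^{\mathrm{poly}}\vec x)=w_0\wt(\vec x)$. I would also double-check the boundary case $\tilde e_{n+1-i}\rho=0$ in the intertwining step, where I use that $\varphi_i(T)=\varepsilon_{n+1-i}(\rho_\lambda T)$; this follows from the weight identity together with the $\varepsilon$ values being equal along the matched strings.
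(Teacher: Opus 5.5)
Your treatment of involutivity and of $\rho_\lambda^{\mathrm{poly}}\circ\tilde f_i=\tilde e_{n+1-i}\circ\rho_\lambda^{\mathrm{poly}}$ is correct. It is the same transport-of-structure argument the paper has in mind when it derives the corollary from Propositions~\ref{prop:crystal-Yprime}, \ref{prop:rholambda} and Theorem~\ref{thm: psilambda}. The case $\tilde e_jT=0$ of your companion identity follows directly from $\rho_\lambda^2=\mathrm{id}$. If $S=\tilde f_{n+1-j}\rho_\lambda T\neq0$, then $\rho_\lambda\bigl(\tilde f_j\rho_\lambda S\bigr)=\tilde e_{n+1-j}S=\rho_\lambda T$. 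Hence $T=\tilde f_j\rho_\lambda S$ and $\tilde e_jT\neq0$.

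On the weight clause, do not frame the discrepancy as a choice of convention. The $\alpha_i$ form a $\mathbb Q$-basis of $P\otimes\mathbb Q$, so the definition $\tau(\alpha_i)=\alpha_{n+1-i}$ already forces $\tau=-w_0$, i.e.\ $\tau(\omega_i)=\omega_{n+1-i}$. You cannot reinterpret $\tau$ as $w_0$, which sends $\alpha_i$ to $-\alpha_{n+1-i}$.

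Your computation $\wt(\rho_\lambda^{\mathrm{poly}}\vec x)=w_0\,\wt(\vec x)$ is right, and it shows the printed identity is false whenever $\lambda\neq0$. For instance, $\rho_\lambda^{\mathrm{poly}}(\vec 0)=\psi_\lambda^{-1}\bigl(\phi^{-1}(T^\lambda)\bigr)$ has weight $w_0\lambda$, whereas $\tau(\lambda)=-w_0\lambda$. In fact the two displayed claims of the corollary are mutually inconsistent. The intertwining gives $\wt(\rho_\lambda^{\mathrm{poly}}\tilde f_i\vec x)=\wt(\rho_\lambda^{\mathrm{poly}}\vec x)+\alpha_{n+1-i}$, while $\tau$-equivariance would give $-\alpha_{n+1-i}$.

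So state your result as a correction: $\wt(\rho_\lambda^{\mathrm{poly}}\vec x)=w_0\,\wt(\vec x)=-\tau(\wt(\vec x))$. The $\tau$-equivariance is the right statement only for $\rho_\infty$, which sends $\tilde f_i$ to $\tilde f_{n+1-i}$ rather than to $\tilde e_{n+1-i}$.
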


In the Gelfand–Tsetlin pattern \eqref{eq:gtpattern}, we connect \( y_k^{(1)} \) and \( y_k^{(n+1-k)} \) with a line segment \( L_k \) for each \( 1 \leq k \leq n \). 
Thus, all the entries \( y_k^1, \ldots, y_k^{n+1-k} \) are located on the line segment \( L_k \). We also connect \( \lambda_i \) and \( \lambda_{n+1} \) with a line segment \( H_{i-1} \) for $2\leq i\leq n+1$.

\vskip 1mm

By Theorem \ref{thm:crystalGT}, we can combinatorially construct vectors in the polyhedral realization from the Gelfand–Tsetlin patterns as follows:

\vskip 1mm

For $g\in \mathcal{GT}_\lambda$, let \( \#L_i \) denote the sequence formed by arranging the numbers along the segment \( L_i \) from the upper left to the lower right, and let \( \#H_i \) 
represent the sequence formed by arranging the numbers along the segment  \( H_i \) from left to right.  For example, when \( n = 3 \), Figure \ref{HLn=3} below illustrates the positions of \( L_i \) and \( H_i \).
\begin{figure}[H] 
	\begin{tikzpicture}[scale=1]

			\node at (1,-1.5){$\bullet$};
			\node at (2, -1.5){$\bullet$};
			\node at (3, -1.5){$\bullet$};
			\node at (4, -1.5){$\bullet$};

			\node at (1.5,-2.5){$\bullet$};
			\node at (2.5, -2.5){$\bullet$};
			\node at (3.5, -2.5){$\bullet$};
			\node at (2,-3.5){$\bullet$};
			\node at (3, -3.5){$\bullet$};
			\node at (2.5,-4.5){$\bullet$};
			
\draw (1.5,-2.5)--(2.5,-4.5);	
	\node at (2.9,-4.7){$ L_1$};		
\draw (2.5,-2.5)--(3, -3.5);				
	\node at (3.4, -3.7){$\small L_2$};	
				\node at (3.9, -2.7){$\small L_3$};
\draw (2, -1.5)--(4, -1.5);	
\draw[decorate,decoration={brace,raise=5pt}] (2, -1.5)--(4, -1.5);	
	\node at (3, -1){$H_1$};	

\draw[decorate,decoration={brace,mirror,raise=5pt}] (3, -1.5)--(4, -1.5);	
	\node at (3.5, -2){$H_2$};
	
	\node at (4.4, -1.7){$H_3$};						
	\end{tikzpicture}
	\vskip -3mm
\caption{The positions of \( L_i \) and \( H_i \)}\label{HLn=3}	
\end{figure}

\vskip 1mm

It is straightforward to observe that the number of elements in \( \#L_i \) is equal to the number of elements in \( \#H_i \). Therefore, we define \( \#L_i-\#H_i \) to be the sequence obtained by subtracting \( \#H_i \) from \( \#L_i \) element-wise.
Then the vector $\varsigma_\lambda(g)$ is given by 
\begin{equation*}
(\cdots, 0, 0, \#L_n-\#H_n, \cdots, \#L_2-\#H_2,  \#L_1-\#H_1),
\end{equation*}
where the reading order of each part $\#L_i-\#H_i$ of  $\varsigma_\lambda(g)$ from right to left corresponds to the reading order of $\#L_i-\#H_i$ from top left to bottom right in the Gelfand–Tsetlin pattern.

\begin{Ex}
For fixed values of	$n$ and $\lambda$ as in Example \ref{ex:reverse}, the three models corresponding to the highest and lowest weight vectors are depicted in Figures  \ref{hwv}--\ref{lwv}.
\begin{figure}[H] 
	\begin{tikzpicture}[scale=0.5]
\begin{scope}[shift={(-2,0)}]
\node at (0.5,-0.5){$6$};
\node at (1.5, -0.5){$4$};
\node at (2.5, -0.5){$4$};
\node at (3.5, -0.5){$2$};
\node at (4.5, -0.5){$1$};
\node at (5.5, -0.5){$1$};
\node at (6.5, -0.5){$0$};
\node at (7.5, -0.5){$0$};
\node at (1,-1.5){$4$};
\node at (2, -1.5){$4$};
\node at (3, -1.5){$2$};
\node at (4, -1.5){$1$};
\node at (5, -1.5){$1$};
\node at (6, -1.5){$0$};
\node at (7, -1.5){$0$};
\node at (1.5,-2.5){$4$};
\node at (2.5, -2.5){$2$};
\node at (3.5, -2.5){$1$};
\node at (4.5, -2.5){$1$};
\node at (5.5, -2.5){$0$};
\node at (6.5, -2.5){$0$};
\node at (2,-3.5){$2$};
\node at (3, -3.5){$1$};
\node at (4, -3.5){$1$};
\node at (5, -3.5){$0$};
\node at (6, -3.5){$0$};
\node at (2.5,-4.5){$1$};
\node at (3.5,-4.5){$1$};
\node at (4.5,-4.5){$0$};
\node at (5.5,-4.5){$0$};
\node at (3,-5.5){$1$};
\node at (4,-5.5){$0$};
\node at (5,-5.5){$0$};
\node at (3.5,-6.5){$0$};
\node at (4.5,-6.5){$0$};
\node at (4,-7.5){$0$};

\node at (9,-3.5){$\varsigma_\lambda$};
\draw[thick,->] (8,-4)--(10,-4);
\end{scope}	

\begin{scope}[shift={(3,0)}]
\node at (6,-4) {$\overrightarrow{0}$};
\draw[thick,->] (7,-4)--(9,-4);
\node at (8,-3.5){$\psi_\lambda$};
\end{scope}	

\begin{scope}[shift={(12.5,-1)}]	
	\draw (0,0)--(6,0);
	\draw (0,-1)--(6,-1);
	\draw (0,-2)--(4,-2);
	\draw (0,-3)--(4,-3);
	\draw (0,-4)--(2,-4);
	\draw (0,-5)--(1,-5);
	\draw (0,-6)--(1,-6);
\draw (0,0)--(0,-6);
\draw (1,0)--(1,-6);
\draw (2,0)--(2,-4);
\draw (3,0)--(3,-3);
\draw (4,0)--(4,-3);
\draw (5,0)--(5,-1);
\draw (6,0)--(6,-1);
\node at (0.5,-0.5){$6$};
\node at (1.5, -0.5){$4$};
\node at (2.5, -0.5){$3$};
\node at (3.5, -0.5){$3$};
\node at (4.5, -0.5){$1$};
\node at (5.5, -0.5){$1$};
\node at (0.5,-1.5){$5$};
\node at (1.5, -1.5){$3$};
\node at (2.5, -1.5){$2$};
\node at (3.5, -1.5){$2$};
\node at (0.5,-2.5){$4$};
\node at (1.5, -2.5){$2$};
\node at (2.5, -2.5){$1$};
\node at (3.5, -2.5){$1$};
\node at (0.5,-3.5){$3$};
\node at (1.5, -3.5){$1$};
\node at (0.5,-4.5){$2$};
\node at (0.5,-5.5){$1$};
\end{scope}
\end{tikzpicture}
\vskip -3mm
\caption{Three models for the highest weight vector}\label{hwv}
\end{figure}
\begin{figure}[H] 
	\begin{tikzpicture}[scale=0.5]
	\begin{scope}[shift={(-2,0)}]
	
\node at (0.5,-0.5){$6$};
\node at (1.5, -0.5){$4$};
\node at (2.5, -0.5){$4$};
\node at (3.5, -0.5){$2$};
\node at (4.5, -0.5){$1$};
\node at (5.5, -0.5){$1$};
\node at (6.5, -0.5){$0$};
\node at (7.5, -0.5){$0$};
\node at (1,-1.5){$6$};
\node at (2, -1.5){$4$};
\node at (3, -1.5){$4$};
\node at (4, -1.5){$2$};
\node at (5, -1.5){$1$};
\node at (6, -1.5){$0$};
\node at (7, -1.5){$0$};
\node at (1.5,-2.5){$6$};
\node at (2.5, -2.5){$4$};
\node at (3.5, -2.5){$4$};
\node at (4.5, -2.5){$2$};
\node at (5.5, -2.5){$0$};
\node at (6.5, -2.5){$0$};
\node at (2,-3.5){$6$};
\node at (3, -3.5){$4$};
\node at (4, -3.5){$4$};
\node at (5, -3.5){$0$};
\node at (6, -3.5){$0$};
\node at (2.5,-4.5){$6$};
\node at (3.5,-4.5){$4$};
\node at (4.5,-4.5){$0$};
\node at (5.5,-4.5){$0$};
\node at (3,-5.5){$6$};
\node at (4,-5.5){$0$};
\node at (5,-5.5){$0$};
\node at (3.5,-6.5){$0$};
\node at (4.5,-6.5){$0$};
\node at (4,-7.5){$0$};
	
	\node at (9,-3.5){$\varsigma_\lambda$};
	\draw[thick,->] (8,-4)--(10,-4);
	\end{scope}	
	
	\begin{scope}[shift={(3,0)}]
	\node at (6,-4) {$\overrightarrow{x}_L$};
	\draw[thick,->] (7,-4)--(9,-4);
	\node at (8,-3.5){$\psi_\lambda$};
	\end{scope}	
	
	\begin{scope}[shift={(12.5,-1)}]	
	\draw (0,0)--(6,0);
	\draw (0,-1)--(6,-1);
	\draw (0,-2)--(4,-2);
	\draw (0,-3)--(4,-3);
	\draw (0,-4)--(2,-4);
	\draw (0,-5)--(1,-5);
	\draw (0,-6)--(1,-6);
	\draw (0,0)--(0,-6);
	\draw (1,0)--(1,-6);
	\draw (2,0)--(2,-4);
	\draw (3,0)--(3,-3);
	\draw (4,0)--(4,-3);
	\draw (5,0)--(5,-1);
	\draw (6,0)--(6,-1);
	\node at (0.5,-0.5){$6$};
	\node at (1.5, -0.5){$6$};
	\node at (2.5, -0.5){$6$};
	\node at (3.5, -0.5){$6$};
	\node at (4.5, -0.5){$6$};
	\node at (5.5, -0.5){$6$};
	\node at (0.5,-1.5){$5$};
	\node at (1.5, -1.5){$5$};
	\node at (2.5, -1.5){$5$};
	\node at (3.5, -1.5){$5$};
	\node at (0.5,-2.5){$4$};
	\node at (1.5, -2.5){$4$};
	\node at (2.5, -2.5){$4$};
	\node at (3.5, -2.5){$4$};
	\node at (0.5,-3.5){$3$};
	\node at (1.5, -3.5){$3$};
	\node at (0.5,-4.5){$2$};
	\node at (0.5,-5.5){$1$};
	\end{scope}
	\end{tikzpicture}
\vskip -3mm	
	\caption{Three models for the lowest weight vector}\label{lwv}
\end{figure}
Here, the vector $\overrightarrow{x}_L$ is given by 
\vskip -3mm
\begin{equation*}
(\cdots,0,1,1,0,0,3,3,2,0,0,3,3,2,0,0,0,5,5,4,2,2).
\end{equation*}
\end{Ex}

\section{Combinatorial description of the crystal embedding}\label{sec:crystal embedding}
In this section, we exploit  the correspondence between  polyhedral realizations and reverse tableau models to provide a combinatorial description of the crystal embedding 
$\mathcal B(\lambda){\hookrightarrow}\mathcal B(\infty)\otimes R_\lambda$.

\vskip 1mm

Let \( T \in \mathcal T'(\lambda) \), and let \( \xi_i^j(T) \) denote the number of columns between the leftmost \( i \)-box in the \( j \)-th row and the rightmost \( i \)-box in the \( (j+1) \)-th row of \( T \).

\begin{Lem}
Let \( g_T=(\Lambda_1,\Lambda_2,\cdots, \Lambda_{n+1}) \) denote the Gelfand–Tsetlin pattern corresponding to \( T \). 			
Then, for $1\leq i\leq n$ and $1\leq j\leq n+1-i$, we have $\xi_i^j(T)=\Lambda_{i+1}^j-\Lambda_{i}^{j+1}$.
\end{Lem}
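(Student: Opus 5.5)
The plan is to read off the positions of the $i$-boxes directly from the construction of $\psi_\lambda$ in \eqref{eq:psi}, under which $T=T_{\overrightarrow{x}}$ with $\overrightarrow{x}=\varsigma_\lambda(g_T)$. First I will record the structural fact underlying that construction. Since the entries of a reverse semi-standard Young tableau weakly decrease along rows, the set of boxes of $T$ with entry $\ge i$ is a left-justified sub-diagram. That sub-diagram is exactly $\Lambda_i$, because stacking the skew diagrams $\Lambda_m/\Lambda_{m+1}$ filled with $m$ for $m\ge i$ produces $\Lambda_i$. Hence, in the $j$-th row of $T$, the $i$-boxes occupy precisely the columns
\[
\Lambda_{i+1}^{j}+1,\ \Lambda_{i+1}^{j}+2,\ \dots,\ \Lambda_i^{j},
\]
where $\Lambda_{n+2}$ is taken to be empty and missing rows have length $0$.

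From this, the leftmost $i$-box in row $j$ sits in column $\Lambda_{i+1}^{j}+1$, and the rightmost $i$-box in row $j+1$ sits in column $\Lambda_i^{j+1}$. The number of columns strictly between these two positions is
\[
(\Lambda_{i+1}^{j}+1)-\Lambda_i^{j+1}-1=\Lambda_{i+1}^{j}-\Lambda_i^{j+1}.
\]
This is the claimed value of $\xi_i^j(T)$. To justify that it is a genuine nonnegative count, with the leftmost $i$-box of row $j$ weakly to the right of the rightmost $i$-box of row $j+1$, I will invoke the interlacing inequality $\Lambda_{i+1}^{j}\ge \Lambda_i^{j+1}$ from Lemma \ref{eq:inequality of poly}, equivalently the defining inequalities of $\mathcal{GT}_\lambda$ in \eqref{eq: GT}. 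That inequality is exactly the second half of \eqref{eq：tableau inequality}, and it is also what makes the columns of $T$ strictly decreasing.

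The main obstacle is the degenerate situation in which row $j$ or row $j+1$ contains no $i$-box. In that case the phrase ``leftmost/rightmost $i$-box'' must be interpreted as the boundary positions $\Lambda_{i+1}^j+1$ and $\Lambda_i^{j+1}$, that is, the place where an $i$-box would be inserted. I will state this convention explicitly so that $\xi_i^j$ is well defined in all cases, and then the formula holds uniformly. Finally, I will check the index range. For $1\le i\le n$ and $1\le j\le n+1-i$, both $\Lambda_{i+1}^{j}$ and $\Lambda_i^{j+1}$ are entries of the Gelfand--Tsetlin pattern, possibly equal to $0$ at the boundary, so the formula makes sense throughout the stated range.
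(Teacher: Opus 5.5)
Your proposal is correct and follows the paper's proof. Both arguments locate the leftmost $i$-box of row $j$ in column $\Lambda_{i+1}^j+1$ and the rightmost $i$-box of row $j+1$ in column $\Lambda_i^{j+1}$, then count the columns strictly between them; your extra remarks (that the boxes with entry $\ge i$ form $\Lambda_i$, nonnegativity via $\Lambda_{i+1}^j\ge\Lambda_i^{j+1}$, and the convention for rows containing no $i$-box) make explicit what the paper leaves to its figure.
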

\begin{proof}
As shown in  Figure \ref{xiij} below, the column index of the leftmost \( i \)-block in the \( j \)-th row is  \( \Lambda_{i+1}^j +1\), and the column index of the rightmost \( i \)-block in the \( (j+1) \)-th row is  \( \Lambda_i^{j+1} \).
\begin{figure}[H] 
	\begin{tikzpicture}[scale=0.5]
	\draw (0,-1)--(16,-1)--(16, -2)--(15,-2);
\node[rotate=45] at (14.5,-2.5){$\cdots$};	
	\draw (0,-1)--(0,-7)--(1,-7)--(1,-6);
\node[rotate=45] at (1.5,-5.5){$\cdots$};

\begin{scope}[shift={(0,1)}]	
\draw (2,-5)--(6,-5)--(6,-6)--(2,-6)--(2,-5);
\draw (3,-5)--(3,-6);	
\node at (4, -5.5){$\cdots$};
\draw (5,-5)--(5,-6);
\node at (2.5, -5.5){$i$};
\node at (5.5, -5.5){$i$};	
	\node at (-0.5, -5.5){$j\!+\!1\longrightarrow$};	
			\node at (5.5, -1.1){$\Lambda_i^{j+1}$};
\draw[->] (5.5,-1.8)--(5.5,-3);			
\end{scope}	

\begin{scope}[shift={(4,2)}]	
	\draw[fill=gray,domain=0:6.28] (2,-5)--(6,-5)--(6,-6)--(2,-6)--(2,-5);
	\draw (3,-5)--(3,-6);	
	\node at (4, -5.5){$\cdots$};
	\draw (5,-5)--(5,-6);
	\node at (-4, -5.5){$j\longrightarrow$};	

\end{scope}	

\begin{scope}[shift={(8,2)}]	
	\draw (2,-5)--(6,-5)--(6,-6)--(2,-6)--(2,-5);
	\draw (3,-5)--(3,-6);	
	\node at (4, -5.5){$\cdots$};
	\draw (5,-5)--(5,-6);
	\node at (2.5, -5.5){$i$};
	\node at (5.5, -5.5){$i$};	
		\node at (2.5, -2.1){$\Lambda_{i+1}^j\!+\!1$};	
	\draw[->] (2.5,-2.8)--(2.5,-4);	
\end{scope}	

\end{tikzpicture}

\caption{Columns between the leftmost \( i \)-box in the \( j \)-th row and the rightmost \( i \)-box in the \( (j+1) \)-th row}\label{xiij}
\end{figure}

Therefore, the number of blocks in the shaded area is \( \Lambda_{i+1}^j-\Lambda_{i}^{j+1} \), which completes the proof of the lemma.
\end{proof}

We define 
\begin{equation}\label{eq:TlambdatoTinfty}
\mathtt{ml}:\mathcal T'(\lambda)\longrightarrow \mathcal T'(\infty),\quad
T\mapsto T^{\mathtt{ml}}
\end{equation}
by setting $z_{i}^j(T^{\mathtt{ml}}):=\xi_{j}^{n+2-i-j}(T)$ for $1\leq i\leq n$ and $1\leq j\leq n+1-i$.

\vskip 2mm

It is straightforward to verify that the map $\mathtt{ml}$ is injective. Furthermore, we have the following theorem:
\begin{Thm}\label{thm:tableauembedding}
The image of $\mathcal T'(\lambda)$	under the map $\mathtt{ml}$  is crystal isomorphic to $\mathcal B(\lambda)$.
\end{Thm}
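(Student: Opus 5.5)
The plan is to prove Theorem~\ref{thm:tableauembedding} through the polyhedral realizations, where the embedding $\mathcal B(\lambda)\hookrightarrow\mathcal B(\infty)\otimes R_\lambda$ is already visible. The key claim is that the composite $\psi_\infty\circ\mathtt{ml}$ is exactly the inverse of $\psi_\lambda$, viewed as a vector in $\Sigma_\iota$. That is, for every $T\in\mathcal T'(\lambda)$,
\[
\psi_\infty\bigl(T^{\mathtt{ml}}\bigr)=\psi_\lambda^{-1}(T)\in\Sigma_\iota[\lambda]\subset\Sigma_\iota .
\]
Granting this, the crystal structure on $\mathtt{ml}(\mathcal T'(\lambda))\subset\mathcal T'(\infty)$ is transported to $\Sigma_\iota[\lambda]$. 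There the statement reduces to the general polyhedral fact (Appendix~\ref{sec:poly_general}, \cite{Na99}). That fact says $\Sigma_\iota[\lambda]$ carries the crystal structure of $\mathcal B(\lambda)$ obtained from the strict embedding $\overrightarrow{x}\mapsto \overrightarrow{x}\otimes r_\lambda$ into $\Sigma_\iota\otimes R_\lambda\cong\mathcal B(\infty)\otimes R_\lambda$. The extra term $\sigma_0^{(i)}$ in \eqref{eq:crystal operator} is precisely the tensor-product correction coming from $R_\lambda$.

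\textbf{Step 1: the identity $\psi_\infty\circ\mathtt{ml}=\psi_\lambda^{-1}$.} By the definition of $\psi_\infty$ in \eqref{eq:TinftytoSigmainfty} and of $\mathtt{ml}$,
\[
x_j^{(i)}\bigl(T^{\mathtt{ml}}\bigr)=\sum_{k=1}^{i} z^{k}_{n+2-i-j}\bigl(T^{\mathtt{ml}}\bigr)=\sum_{k=1}^{i}\xi_k^{\,i+j-k}(T).
\]
By the preceding Lemma, $\xi_k^{m}(T)=\Lambda_{k+1}^{m}-\Lambda_k^{m+1}$. With $m=i+j-k$, the sum telescopes:
\[
\sum_{k=1}^{i}\bigl(\Lambda_{k+1}^{\,i+j-k}-\Lambda_k^{\,i+j-k+1}\bigr)=\Lambda_{i+1}^{\,j}-\Lambda_1^{\,i+j}.
\]
Since $\Lambda_1=\lambda$ and $\Lambda_{i+1}^{j}=x_j^{(i)}+\lambda_{i+j}$ by \eqref{eq:seq Lambda}, this equals $x_j^{(i)}\bigl(\psi_\lambda^{-1}(T)\bigr)$. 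I would also check the boundary indices $i+j>n+1$ and the convention $\lambda_{n+1}=0$, but these should be routine. This step also reproves injectivity of $\mathtt{ml}$, and it shows that the image of $\mathtt{ml}$ is $\psi_\infty^{-1}(\Sigma_\iota[\lambda])$.

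\textbf{Step 2: transport the structure.} Equip $\mathtt{ml}(\mathcal T'(\lambda))$ with the structure it inherits inside $\mathcal T'(\infty)\otimes R_\lambda$. This means $\wt$ is shifted by $\lambda$, $\varepsilon_i$ is taken from $\mathcal T'(\infty)$, $\varphi_i=\varepsilon_i+\langle\alpha_i^\vee,\wt\rangle$, $\tilde e_i$ is restricted from $\mathcal T'(\infty)$, and $\tilde f_i$ is set to $0$ when $\varphi_i=0$. Theorem~\ref{thm:psiinfty} says $\psi_\infty\otimes\mathrm{id}$ is a crystal isomorphism $\mathcal T'(\infty)\otimes R_\lambda\to\Sigma_\iota\otimes R_\lambda$. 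It therefore identifies this subset with $\Sigma_\iota[\lambda]\otimes r_\lambda$. Composing with Theorem~\ref{thm: psilambda} gives the isomorphism $\mathtt{ml}(\mathcal T'(\lambda))\cong\mathcal T'(\lambda)\cong\mathcal B(\lambda)$, with $\mathtt{ml}=\psi_\infty^{-1}\circ\psi_\lambda^{-1}$ as the intertwiner.

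\textbf{Main obstacle.} The delicate point is that the subset is closed under the operators, together with the consistency of the tensor-product rule. Concretely, one must confirm that the image of $\Sigma_\iota[\lambda]$ in $\Sigma_\iota\otimes R_\lambda$ is stable under $\tilde e_i$, and under $\tilde f_i$ whenever $\varphi_i>0$. One must also confirm that the operators $\tilde e_i,\tilde f_i$ of \eqref{eq:crystal operator} agree with those of \eqref{eq:crystal operator on Zinfty} on these elements. I would first invoke the general theorem of \cite{Na99} recalled in the Appendix, which states exactly this. If a direct check is preferred, I would compare the two formulas. They differ only through $\sigma_0^{(i)}$, and $\sigma^{(i)}\ge\sigma^{(i)}_0$ (Lemma~\ref{lem:sigmaigeqsigma0}) handles the case where $\tilde f_i$ would leave the polytope.
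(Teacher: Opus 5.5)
Your proposal is correct and follows the paper's proof. Step 1 is exactly the telescoping computation \eqref{eq:aji=xji}, which shows $\psi_\infty(T^{\mathtt{ml}})=\psi_\lambda^{-1}(T)\in\Sigma_\iota[\lambda]$. Step 2 spells out the transport of structure through $\mathcal T'(\infty)\otimes R_\lambda\cong\Sigma_\iota\otimes R_\lambda$ and \cite{Na99}, which the paper compresses into its final sentence.

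One small caveat on your optional direct check. Lemma~\ref{lem:sigmaigeqsigma0} only shows that $\varepsilon_i$ and $\tilde e_i$ need no $R_\lambda$-correction on $\Sigma_\iota[\lambda]$. It does not show that $\Sigma_\iota[\lambda]$ is stable under $\tilde f_i$ when $\sigma^{(i)}>\sigma^{(i)}_0$; that comes from \cite{Na99}, so keep the citation as the justification there.
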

\begin{proof}

For any $T\in \mathcal T'(\lambda)$ and its image $T^{\mathtt{ml}}$ under the map $\mathtt{ml}$, let \( \overrightarrow{x}_T =(\cdots,x_2,x_1)\) and \( \overrightarrow{b}_{T^{\mathtt{ml}}}=(\cdots,b_2,b_1) \) be the corresponding vectors in the polyhedral realizations of $\mathcal B(\lambda)$ and $\mathcal B(\infty)$, respectively.

\vskip 1mm

It follows from \eqref{eq:gtpattern} that $\Lambda_{i+1}^j=y_j^{(i)}$ for $1\leq i\leq n$ and $1\leq j\leq n+1-i$.
Based on the definition of $\psi_\infty$ in \eqref{eq:TinftytoSigmainfty}, together with that $\mathtt{ml}$  in \eqref{eq:TlambdatoTinfty},
we obtain
\begin{equation}\label{eq:aji=xji}
\begin{aligned}
b_j^{(i)}&=\sum_{k=1}^{i}z_{n+2-i-j}^k(T^{\mathtt{ml}})=\sum_{k=1}^{i}\xi_k^{i+j-k}(T)\\
&=\sum_{k=1}^{i}(\Lambda_{k+1}^{i+j-k}-\Lambda_k^{i+j-k+1})=\Lambda_{i+1}^j-\Lambda_1^{i+j}=\Lambda_{i+1}^j-\lambda_{i+j}=x_j^{(i)},
\end{aligned}
\end{equation}
which implies $\overrightarrow{x}_T=\overrightarrow{b}_{T^{\mathtt{ml}}}$.

\vskip 1mm

Thus, any vector in $\psi_\infty\circ \mathtt{ml}(\mathcal T'(\lambda))$ satisfies the conditions  in \eqref{eq:poly_Blambda}. Therefore, the image of \( \mathtt{ml} \) inherits the same crystal structure as \( \mathcal B(\lambda) \).
\end{proof}

For any $T\in\mathcal T'(\lambda)$, let $\xi_i^j:=\xi_i^j(T)$. Then, by Theorem  \ref{thm:tableauembedding}, the corresponding reverse marginally large tableau $T^{\mathtt{ml}}$ can be explicitly described by the following Figure \ref{fig:TmlfromT}.
\begin{figure}[H] 
	\begin{tikzpicture}[scale=0.5]
		\draw (-1,0)--(0,0);	
		\draw (-1,-1)--(0,-1);	
		\draw (-1,-2)--(0,-2);			
		\draw (-1,-4)--(0,-4);	
		\draw (-1,-5)--(0,-5);			
		\draw (-1,-6)--(0,-6);			
\node at (-0.5, -0.5){$\cdots$};			
	\node at (-0.5, -1.5){$\cdots$};
	\node at (-0.5, -3){$\vdots$};	
	\node at (-0.5, -4.5){$\cdots$};		
	\node at (-0.5, -5.5){$\cdots$};						
		\draw (0,0)--(29,0);
		\draw (0,-1)--(29,-1);
			\draw (0,-2)--(20,-2);
			\draw (0,-4)--(9,-4);	
		\draw (0,-5)--(9,-5);				
		\draw (0,-6)--(4,-6);	
		\draw (0,0)--(0,-6)	;			
		\draw (1,0)--(1,-6)	;
		\draw (2,0)--(2,-6)	;	
		\draw (4,-4)--(4,-6);	
		\draw (5,-5)--(5,-4);
	\draw (7,-5)--(7,-4);	
	\draw (9,-5)--(9,-4);	
	\draw (9,-5)--(9,-4);	
	\draw (11,0)--(11,-2);	
\draw (12,0)--(12,-2);	
\draw (14,-1)--(14,-2);		
	\draw (16,-1)--(16,-2);	
		\draw (18,-1)--(18,-2);	
		\draw (20,0)--(20,-2);		
		\draw (21,0)--(21,-1);
		\draw (23,0)--(23,-1);		
		\draw (25,0)--(25,-1);			
		\draw (27,0)--(27,-1);			
		\draw (29,0)--(29,-1);			
\node at (0.5, -0.5){\tiny $n\!\!+\!\!1$};	
\node at (1.5, -0.5){\tiny $n\!\!+\!\!1$};	
\node at (6.5, -0.5){$\cdots$};	
\node at (11.5, -0.5){\tiny $n\!\!+\!\!1$};	
\node at (16, -0.5){$\cdots$};	
\node at (20.5, -0.5){\tiny $n\!\!+\!\!1$};	
\node at (22, -0.5){\tiny $\xi_n^1$};	
\node at (24, -0.5){\tiny $\xi_{n-1}^2$};	
\node at (26, -0.5){$\cdots$};	
\node at (28, -0.5){\tiny $\xi_1^n$};	
\node at (0.5, -1.5){\tiny $n$};	
\node at (1.5, -1.5){\tiny $n$};	
\node at (6.5, -1.5){$\cdots$};	
\node at (11.5, -1.5){\tiny $n$};	
\node at (13, -1.5){\tiny $\xi_{n-1}^1$};	
\node at (15, -1.5){\tiny $\xi_{n-2}^2$};	
\node at (17, -1.5){$\cdots$};	
	\node at (19, -1.5){\tiny $\xi_1^{n-1}$};			
\node at (0.5, -3){$\vdots$};	
\node at (1.5, -3){$\vdots$};	
\node at (6.5, -3){$\cdots$};	
\node[rotate=45] at (10, -3){$\cdots$};	
\node at (0.5, -4.5){\tiny $3$};	
\node at (1.5, -4.5){\tiny $3$};	
\node at (3, -4.5){$\cdots$};
\node at (4.5, -4.5){\tiny $3$};	
\node at (6, -4.5){\tiny $\xi_2^1$};	
\node at (8, -4.5){\tiny $\xi_1^2$};
\node at (0.5, -5.5){\tiny $2$};	
\node at (1.5, -5.5){\tiny $2$};	
\node at (3, -5.5){\tiny $\xi_1^1$};											
	\end{tikzpicture}	
	\caption{The reverse marginally large tableau \( T^{\mathtt{ml}} \) constructed from the reverse tableau \( T \).
	}	\label{fig:TmlfromT}	
\end{figure}
Here, $\xi_i^j$ represents the number of $i$-boxes in the 
$(n+2-i-j)$-th row of $T^{\mathtt{ml}}$.

\begin{Ex}
Let $n=4$, and $\lambda=12\epsilon_1+10\epsilon_2+8\epsilon_3+3\epsilon_4$.
We consider the following RSSYT:
\begin{figure}[H] 
	\begin{tikzpicture}[scale=0.5]
\draw (0,0)--(12,0);	
\draw (0,-1)--(12,-1);			
\draw (0,-2)--(10,-2);			
\draw (0,-3)--(8,-3);			
\draw (0,-4)--(3,-4);		
\draw (0,0)--(0,-4);			
\draw (1,0)--(1,-4);		
\draw (2,0)--(2,-4);			
\draw (3,0)--(3,-4);		
\draw (4,0)--(4,-3);		
\draw (5,0)--(5,-3);
\draw (6,0)--(6,-3);
\draw (7,0)--(7,-3);	
\draw (8,0)--(8,-3);		
\draw (9,0)--(9,-2);
\draw (10,0)--(10,-2);			
\draw (11,0)--(11,-1);
\draw (12,0)--(12,-1);			
\node at (0.5, -0.5){$5$};			
\node at (1.5, -0.5){$5$};				
\node at (2.5, -0.5){$5$};		
\node at (3.5, -0.5){$5$};	
\node at (4.5, -0.5){$5$};	
\node at (5.5, -0.5){$5$};	
\node at (6.5, -0.5){$5$};	
\node at (7.5, -0.5){$5$};	
\node at (8.5, -0.5){$4$};	
\node at (9.5, -0.5){$4$};	
\node at (10.5, -0.5){$3$};	
\node at (11.5, -0.5){$2$};	
\node at (0.5, -1.5){$4$};			
\node at (1.5, -1.5){$4$};				
\node at (2.5, -1.5){$4$};		
\node at (3.5, -1.5){$4$};	
\node at (4.5, -1.5){$4$};	
\node at (5.5, -1.5){$4$};	
\node at (6.5, -1.5){$3$};	
\node at (7.5, -1.5){$2$};	
\node at (8.5, -1.5){$2$};	
\node at (9.5, -1.5){$1$};	
\node at (0.5, -2.5){$3$};			
\node at (1.5, -2.5){$3$};				
\node at (2.5, -2.5){$3$};		
\node at (3.5, -2.5){$2$};	
\node at (4.5, -2.5){$2$};	
\node at (5.5, -2.5){$2$};	
\node at (6.5, -2.5){$1$};	
\node at (7.5, -2.5){$1$};	
\node at (0.5, -3.5){$2$};			
\node at (1.5, -3.5){$2$};				
\node at (2.5, -3.5){$1$};																
	\end{tikzpicture}			
\end{figure}

Then the corresponding RMLT is given by
\begin{figure}[H] 
	\begin{tikzpicture}[scale=0.5]
		\draw (-1,0)--(24,0);	
		\draw (-1,-1)--(24,-1);	
		\draw (-1,-2)--(15,-2);			
		\draw (-1,-3)--(7,-3);				
		\draw (-1,-4)--(3,-4);				
		\draw (0,0)--(0,-4);			
		\draw (1,0)--(1,-4);		
		\draw (2,0)--(2,-4);		
		\draw (3,0)--(3,-4);			
		\draw (4,0)--(4,-3);			
		\draw (5,0)--(5,-3);		
		\draw (6,0)--(6,-3);			
		\draw (7,0)--(7,-3);		
		\draw (8,0)--(8,-2);		
	\draw (9,0)--(9,-2);	
	\draw (10,0)--(10,-2);			
	\draw (11,0)--(11,-2);			
	\draw (12,0)--(12,-2);	
	\draw (13,0)--(13,-2);	
\draw (14,0)--(14,-2);	
	\draw (15,0)--(15,-2);	
	\draw (16,0)--(16,-1);		
	\draw (17,0)--(17,-1);
	\draw (18,0)--(18,-1);	
		\draw (19,0)--(19,-1);	
		\draw (20,0)--(20,-1);		
		\draw (21,0)--(21,-1);		
\draw (22,0)--(22,-1);		
	\draw (23,0)--(23,-1);	
	\draw (24,0)--(24,-1);		
	\node at (-0.5, -0.5){$\cdots$};			
	\node at (-0.5, -1.5){$\cdots$};
		\node at (-0.5, -2.5){$\cdots$};
		\node at (-0.5, -3.5){$\cdots$};	
	
\node at (0.5, -0.5){$5$};			
\node at (1.5, -0.5){$5$};				
\node at (2.5, -0.5){$5$};		
\node at (3.5, -0.5){$5$};	
\node at (4.5, -0.5){$5$};	
\node at (5.5, -0.5){$5$};	
\node at (6.5, -0.5){$5$};	
\node at (7.5, -0.5){$5$};	
\node at (8.5, -0.5){$5$};	
\node at (9.5, -0.5){$5$};	
\node at (10.5, -0.5){$5$};	
\node at (11.5, -0.5){$5$};	
\node at (12.5, -0.5){$5$};	
\node at (13.5, -0.5){$5$};	
\node at (14.5, -0.5){$5$};	
\node at (15.5, -0.5){$5$};	
\node at (16.5, -0.5){$4$};	
\node at (17.5, -0.5){$4$};	
\node at (18.5, -0.5){$3$};	
\node at (19.5, -0.5){$3$};	
\node at (20.5, -0.5){$3$};	
\node at (21.5, -0.5){$2$};	
\node at (22.5, -0.5){$1$};	
\node at (23.5, -0.5){$1$};	
\node at (0.5, -1.5){$4$};			
\node at (1.5, -1.5){$4$};				
\node at (2.5, -1.5){$4$};		
\node at (3.5, -1.5){$4$};	
\node at (4.5, -1.5){$4$};	
\node at (5.5, -1.5){$4$};	
\node at (6.5, -1.5){$4$};	
\node at (7.5, -1.5){$4$};	
\node at (8.5, -1.5){$3$};	
\node at (9.5, -1.5){$3$};	
\node at (10.5, -1.5){$3$};	
\node at (11.5, -1.5){$2$};	
\node at (12.5, -1.5){$1$};	
\node at (13.5, -1.5){$1$};	
\node at (14.5, -1.5){$1$};	
\node at (0.5, -2.5){$3$};			
\node at (1.5, -2.5){$3$};				
\node at (2.5, -2.5){$3$};
\node at (3.5, -2.5){$3$};		
\node at (4.5, -2.5){$2$};	
\node at (5.5, -2.5){$2$};	
\node at (6.5, -2.5){$1$};	
\node at (0.5, -3.5){$2$};			
\node at (1.5, -3.5){$1$};				
\node at (2.5, -3.5){$1$};					
	\end{tikzpicture}			
\end{figure}
	\end{Ex}

\begin{Cor}\label{cor:imageml}
	The image of the map $\mathtt{ml}$ is given by
	\[
	\mathtt{ml}\bigl(\mathcal T'(\lambda)\bigr)
	=\Bigl\{\,T\in \mathcal T'(\infty)\ \Big|\ 
	\sum_{k=1}^{\,i-j+1}z_{\,n+1-i}^{\,k}(T)\;-\;\sum_{k=1}^{\,i-j}z_{\,n+2-i}^{\,k}(T)\ \le\ \lambda_i-\lambda_{i+1},\
	1\le j\le i\le n\Bigr\},
	\]
	where the empty sum is understood to be $0$.
\end{Cor}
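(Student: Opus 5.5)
The plan is to reduce the statement to the two polyhedral descriptions, using the crystal isomorphism $\psi_\infty$ of Theorem~\ref{thm:psiinfty} and the identity $\psi_\infty(T^{\mathtt{ml}})=\overrightarrow{x}_T$ established in \eqref{eq:aji=xji} in the proof of Theorem~\ref{thm:tableauembedding}. Here $\overrightarrow{x}_T=\psi_\lambda^{-1}(T)\in\Sigma_\iota[\lambda]$. The key claim is
\[
\mathtt{ml}\bigl(\mathcal T'(\lambda)\bigr)=\psi_\infty^{-1}\bigl(\Sigma_\iota[\lambda]\bigr),
\]
where $\Sigma_\iota[\lambda]$ is regarded as a subset of $\Sigma_\iota$. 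This inclusion holds because the defining conditions of \eqref{eq:poly_Blambda} contain those of \eqref{eq:poly_Binfty}. After that, it only remains to rewrite the extra inequalities of \eqref{eq:poly_Blambda} in the $z$-coordinates.

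For the inclusion ``$\subseteq$'': for $T\in\mathcal T'(\lambda)$ we have $\psi_\infty(T^{\mathtt{ml}})=\overrightarrow{x}_T\in\Sigma_\iota[\lambda]$ by \eqref{eq:aji=xji}. For ``$\supseteq$'': let $S\in\mathcal T'(\infty)$ with $\psi_\infty(S)\in\Sigma_\iota[\lambda]$, and put $T:=\psi_\lambda(\psi_\infty(S))\in\mathcal T'(\lambda)$, using Theorem~\ref{thm: psilambda}. Then $\psi_\infty(T^{\mathtt{ml}})=\overrightarrow{x}_T=\psi_\infty(S)$. Since $\psi_\infty$ is bijective, $S=T^{\mathtt{ml}}$.

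Next I translate the conditions. Since $S\in\mathcal T'(\infty)$, the vector $\psi_\infty(S)$ already satisfies the first two families of conditions in \eqref{eq:poly_Blambda}. So $\psi_\infty(S)\in\Sigma_\iota[\lambda]$ if and only if
\[
\lambda_i-\lambda_{i+1}\ \ge\ x_j^{(i-j+1)}-x_j^{(i-j)}\qquad(1\le j\le i\le n),
\]
with $x_j^{(0)}:=0$, which is the convention implicit in \eqref{eq:poly_Blambda}. Now substitute $x_j^{(m)}=\sum_{k=1}^{m}z^k_{n+2-m-j}(S)$ from the definition of $\psi_\infty$:
\begin{itemize}
\item For $m=i-j+1$, the lower index is $n+2-(i-j+1)-j=n+1-i$. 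This gives $\sum_{k=1}^{i-j+1}z^k_{n+1-i}$, and it is in the admissible range because $m+j=i+1\le n+1$.
\item For $m=i-j$, the lower index is $n+2-i$. This gives $\sum_{k=1}^{i-j}z^k_{n+2-i}$, which is the empty sum exactly when $j=i$, matching $x_i^{(0)}=0$.
\end{itemize}
This reproduces the inequalities in the statement exactly.

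I expect the only delicate point to be this index bookkeeping. One must check that the ranges $1\le j\le i\le n$ never produce a $z$ outside its defined range $1\le r\le n$, $1\le s\le n+1-r$, apart from the empty-sum case. For $z^k_{n+2-i}$ with $i\ge 2$, the row index $r=n+2-i$ lies in $[2,n]$ and $k\le i-j\le i-1=n+1-r$, so it is admissible. For $i=1$ we have $j=1$ and the sum is empty. One must also check that the boundary case $j=i$ matches the convention for $x^{(0)}$ used in \eqref{eq:poly_Blambda}, which it does.
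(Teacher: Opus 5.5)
Your proposal is correct and follows essentially the same route as the paper. You identify $\mathtt{ml}(\mathcal T'(\lambda))$ with $\psi_\infty^{-1}(\Sigma_\iota[\lambda])$ using \eqref{eq:aji=xji} together with the bijectivity of $\psi_\infty$ and $\psi_\lambda$, and you then rewrite the third family of inequalities in \eqref{eq:poly_Blambda} in $z$-coordinates via $x_j^{(i-j+1)}-x_j^{(i-j)}=\sum_{k=1}^{i-j+1}z^k_{n+1-i}-\sum_{k=1}^{i-j}z^k_{n+2-i}$. Your version is slightly more careful than the paper's in three ways: you write $\psi_\lambda^{-1}(T)$ rather than $\psi_\lambda(S)$ for the polyhedral vector of a reverse tableau, you state the convention $x_j^{(0)}=0$ explicitly, and you check the index ranges.
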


\begin{proof}
	For $T\in\mathcal T'(\infty)$, let $\vec x=\psi_\infty(T)$ so that, by
	\eqref{eq:TinftytoSigmainfty},
	\[
	x_j^{(i)}=\sum_{k=1}^{\,i} z_{\,n+2-i-j}^{\,k}(T)\qquad(1\le i\le n,\ 1\le j\le n+1-i).
	\]
	Fix $1\le j\le i\le n$. A direct substitution gives the (correct) difference identity
	\begin{equation}\label{eq:difference-z}
		x_j^{(i-j+1)}-x_j^{(i-j)}
		=\sum_{k=1}^{\,i-j+1} z_{\,n+1-i}^{\,k}(T)\;-\;\sum_{k=1}^{\,i-j} z_{\,n+2-i}^{\,k}(T).
	\end{equation}
	
	\smallskip
	\noindent\emph{($\subseteq$)} Suppose $T\in\mathtt{ml}(\mathcal T'(\lambda))$. Then there exists
	$S\in\mathcal T'(\lambda)$ with $T=\mathtt{ml}(S)$. By \eqref{eq:aji=xji},
	\[
	\psi_\infty(T)=\psi_\infty\bigl(\mathtt{ml}(S)\bigr)=\psi_\lambda(S)\in\Sigma_\iota[\lambda].
	\]
	Hence, by Theorem~\ref{thm:polyhedral A},
	\[
	x_j^{(i-j+1)}-x_j^{(i-j)}\ \le\ \lambda_i-\lambda_{i+1}\qquad(1\le j\le i\le n).
	\]
	Using \eqref{eq:difference-z} we obtain exactly the displayed inequalities in the statement.
	
	\smallskip
	\noindent\emph{($\supseteq$)} Conversely, let $T\in\mathcal T'(\infty)$ satisfy the inequalities in the statement. Then \eqref{eq:difference-z} implies
	\[
	x_j^{(i-j+1)}-x_j^{(i-j)}\ \le\ \lambda_i-\lambda_{i+1}\qquad(1\le j\le i\le n),
	\]
	so $\vec x=\psi_\infty(T)\in\Sigma_\iota[\lambda]$ by Theorem~\ref{thm:polyhedral A}. Therefore there exists a unique
	$S\in\mathcal T'(\lambda)$ with $\psi_\lambda(S)=\vec x$. Using \eqref{eq:aji=xji} again,
	\[
	\psi_\infty\bigl(\mathtt{ml}(S)\bigr)=\psi_\lambda(S)=\vec x=\psi_\infty(T).
	\]
	Since $\psi_\infty:\mathcal T'(\infty)\to\Sigma_\iota$ is a bijection (Proposition~\ref{prop:infcrystal}), we conclude $T=\mathtt{ml}(S)$. Hence $T$ lies in the image of $\mathtt{ml}$.
	
	Combining the two inclusions proves the corollary.
\end{proof}

In \cite{Lee14}, Lee  proposed  a definition of reverse marginally large tableaux that differs from the one adopted in this paper and presented two distinct realizations of  $\mathcal B(\lambda)$ via marginally large tableaux..
We now verify that the realization of $\mathcal B(\lambda)$ in Theorem \ref{thm:tableauembedding} coincides with that given by Lee. 

\vskip 1mm

Recall the definition of $r_{i,m}$$(i\in I, 1\leq m\leq n+1-i)$ in the reverse marginally large tableaux given in \cite[Section 6.1]{Lee14}, as well as the set $R(\infty)^\lambda$ consisting of restricted reverse marginally large tableaux.
We define a map $\theta_\lambda: \mathtt{ml}(\mathcal T'(\lambda))\to R(\infty)^\lambda$ as follows:

\vskip 1mm

For any $T^{\mathtt{ml}}\in \mathtt{ml}(\mathcal T'(\lambda))$, the value of $r_{i,m}$ in $\theta_\lambda(T^{\mathtt{ml}})$ is given by 
\begin{equation}\label{eq:rim}
r_{i,m}:=\sum_{k=1}^i z_{n+2-i-m}^k(T^{\mathtt{ml}}).
\end{equation}

\vskip 1mm

By \eqref{eq:rim}, we obtain $z_j^i(T^{\mathtt{ml}})=r_{i,n+2-i-j}-r_{i-1,n+3-i-j}$ for $i\in I$ and $1\leq j\leq n+1-i$. Moreover, by \ref{cor:imageml}, we have

\begin{align*}
&\sum_{k=1}^{i-j+1}(r_{k,n+2-(n+1-i+k)}-r_{k-1,n+3-(n+1-i+k)}) -\sum_{k=1}^{i-j}(r_{k,n+2-(n+2-i+k)}-r_{k-1,n+3-(n+2-i+k)})\\
&=r_{i-j+1,j}-r_{i-j,j}\leq
\lambda_i-\lambda_{i+1},
\end{align*}
which implies 
\begin{equation}\label{eq:rl-rl-1j}
r_{l,j}-r_{l-1,j}\leq \lambda_{l-1+j}-\lambda_{l+j} \text{ for } 1\leq l\leq n,\ 1\leq j\leq n+1-l.
\end{equation}

The conditions in \eqref{eq:rl-rl-1j} coincide with the conditions in \cite[(6.1)]{Lee14}. Therefore, the combinatorial description of the crystal embedding described in Theorem \ref{thm:tableauembedding} is consistent with the one constructed by Lee. 

\vskip 2mm

Recall the PBW basis \(\mathbf{B} := \mathbf{B}_{\mathbf{i}_0}\) of \(U_q^-(A_n)\) associated with the reduced word \(\mathbf{i}_0\) (cf.~\cite[Definition 3.2]{Saito11}).  
Since the elements in \(\mathbf{B}\) can be parametrized by the integer sequences in \(\mathbb{Z}_{\geq 0}^N\)\(( N = \frac{n(n+1)}{2})\),  
we identify \(\mathbb{Z}_{\geq 0}^N\) with \(\mathbf{B}\) whenever no ambiguity arises, and refer to the integer sequences of \( \mathbb Z_{\geq 0}^N \) corresponding to elements in \(\mathbf{B}\) as the \emph{Lusztig data}.  
The crystal structure on \(\mathbf{B}\) is described in \cite[Section 4.1]{Saito11}.

\vskip 1mm

It follows from \cite[Theorem 4.1.2]{Saito94} that $\mathcal B(\infty)\equiv\mathbf B\ q L(\infty)$. This yields a natural embedding $\iota_\lambda:\mathcal T'(\lambda)\hookrightarrow\mathbf B$. As an application of Theorem \ref{thm:tableauembedding}, we provide an explicit description of the crystal embedding $\iota_\lambda$ as follows:
\begin{Thm}
The Lusztig data of the embedding $\iota_\lambda:\mathcal T'(\lambda)\hookrightarrow\mathbf B$ is given by the  $N$-tuple of nonnegative integers  $\iota_\lambda(T)=(\xi_{n+2-l}^{l-k}(T))_{1\leq k<l\leq n+1}$ for each $T\in T'(\lambda)$.
\end{Thm}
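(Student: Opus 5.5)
The plan is to factor $\iota_\lambda$ through the embedding $\mathtt{ml}$ of Theorem~\ref{thm:tableauembedding}. Concretely, I want to write
\[
\iota_\lambda=\Xi\circ\psi_\infty\circ\mathtt{ml},
\]
where $\Xi:\Sigma_\iota\to\mathbf B$ is the crystal isomorphism between the polyhedral realization and Lusztig data for $\mathbf i_0$. Since $\mathcal B(\infty)$ has no nontrivial crystal automorphisms, any crystal isomorphism $\mathcal T'(\infty)\cong\mathbf B$ is unique. So it suffices to produce one explicit such isomorphism and then compose it with $\mathtt{ml}$.

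The first step is to construct the isomorphism $\mathcal T'(\infty)\cong\mathbf B$ directly. Via $\eta$, an RMLT $T$ is recorded by the data $y_j^i(\eta T)=z_j^{n+2-i}(T)$. For type $A_n$ and the reduced word $\mathbf i_0=(1,2,1,3,2,1,\dots)$, the associated positive roots are $\alpha_{k,l}=\epsilon_k-\epsilon_l$ with $k<l$. There is a standard correspondence, due to Lee and to Claxton--Tingley, under which the multiplicity of $\alpha_{k,l}$ equals the number of $l$-boxes in row $k$ of the marginally large tableau, up to the reversal $a\mapsto n+2-a$ that converts MLT into RMLT.

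To fix the indexing, I will check the weight condition: the weight of the Lusztig datum must be $-\sum_{k<l}c_{k,l}\alpha_{k,l}$, and this has to match \eqref{eq:crystal on Yrinfty}. This identifies the coordinate $c_{k,l}$ with a specific $z_r^s(T)$. I then verify the crystal structure, comparing Saito's $\tilde e_i$ on $\mathbf B$ for $\mathbf i_0$ with \eqref{eq:epsilonTinfty}. Because $\mathbf i_0$ begins with $1$ and has a nested structure, Saito's formula for $\varepsilon_i$ should reduce to the same maximum of partial sums $\sum_{k\le j}(y_k^{i+1}-y_{k-1}^i)$. I expect this step to be the main obstacle. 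It requires either an honest computation with Lusztig's braid moves (equivalently, the tropical Plücker formulas relating $\mathbf i_0$-data to the data for the reversed word), or citing Claxton--Tingley. I would first try to avoid the braid computations by showing that both sides give highest weight element $0$, that weights agree, and that $\varepsilon_i$ together with the $\tilde e_i$ action agree on each $i$-string. This reduces the check to a local comparison for the single simple root $\alpha_i$.

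Once the step above gives $c_{k,l}(T)=z_r^s(T)$ with $(r,s)$ determined by $(k,l)$, I substitute the definition of $\mathtt{ml}$, namely $z_i^j(T^{\mathtt{ml}})=\xi_j^{n+2-i-j}(T)$. This yields $c_{k,l}=\xi^{\,\cdot}_{\,\cdot}(T)$, and I will re-index so that the result reads $\xi_{n+2-l}^{l-k}(T)$, in line with Figure~\ref{fig:TmlfromT}. As a check, the number of $(n+2-l)$-boxes in row $k$ of $T^{\mathtt{ml}}$ should correspond, after the reversal, to the root $\epsilon_k-\epsilon_l$.

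Finally, injectivity and compatibility with $\tilde e_i,\tilde f_i$ follow from Theorem~\ref{thm:tableauembedding}, and the tensor factor $R_\lambda$ accounts for the weight shift by $\lambda$. I will also sanity-check the formula on $T=\phi^{-1}(T^\lambda)$, where all $\xi=0$ and the Lusztig datum should be $0$.
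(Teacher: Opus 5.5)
Your route is the same as the paper's. The paper's proof is exactly the chain $\xi_j^{n+2-i-j}(T)=z_i^j(T^{\mathtt{ml}})=y_i^{n+2-j}(\eta T^{\mathtt{ml}})=a_{i,n+2-j}$: apply $\mathtt{ml}$, then $\eta$, then the identification $a_{k,l}=y_k^l$ of MLT with $\mathbf i_0$-Lusztig data. There is, however, a genuine gap at the middle step, and your own proposed weight check is what exposes it.

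For the structure \eqref{eq:crystal on Yrinfty}, $\eta$ is not a crystal morphism. It intertwines $\tilde f_i$ with $\tilde f_{n+1-i}$ and, by \eqref{eq:wt=tauT}, $\wt$ with $\tau\circ\wt$. Suppose $a_{k,l}=y_k^l$ is a crystal isomorphism $\mathcal T(\infty)\cong\mathbf B$, which is what makes $a_{k,l}$ the multiplicity of $\epsilon_k-\epsilon_l$. Then $T\mapsto(y_k^l(\eta T))$ is only a $\tau$-twisted bijection $\mathcal T'(\infty)\to\mathbf B$, and your uniqueness argument says nothing about it. In coordinates, under \eqref{eq:crystal on Yrinfty} the entry $z_j^k$ carries weight $-(\epsilon_k-\epsilon_{n+2-j})$. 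So a weight-compatible coordinate identification must be $c_{k,l}=z^{k}_{n+2-l}$, which after $\mathtt{ml}$ gives $\xi_k^{\,l-k}(T)$. The target $\xi_{n+2-l}^{\,l-k}(T)$ arises only from $c_{k,l}=z_k^{n+2-l}=y_k^l(\eta T)$, whose weight is $-(\epsilon_{n+2-l}-\epsilon_{n+2-k})$. Your closing step ``re-index so that the result reads $\xi_{n+2-l}^{l-k}(T)$'' therefore cannot be carried out consistently.

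The smallest test is $n=2$, $\lambda=\omega_1$. Here $\mathcal T'(\lambda)=\{\boxed{1},\boxed{2},\boxed{3}\}$ with $\tilde f_1\boxed{1}=\boxed{2}$ and $\tilde f_2\boxed{2}=\boxed{3}$.
\begin{itemize}
\item For $\boxed{2}$ one gets $\xi_1^1=1$ and $\xi_1^2=\xi_2^1=0$. The displayed formula then gives $a_{23}=1$, of weight $-\alpha_2$. But $\boxed{2}$ has weight $\lambda-\alpha_1$ and must go to $a_{12}=1$.
\item The weight-corrected permutation does not rescue this. $\tilde f_2\tilde f_1T_\infty$ is the MLT with a single $3$ in row $1$, so $\boxed{3}$ must go to $a_{13}=1$. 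Both $\xi_{n+2-l}^{l-k}$ and $\xi_k^{l-k}$ give $a_{12}=a_{23}=1$ instead.
\end{itemize}

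The correct isomorphism $\mathcal T'(\infty)\cong\mathbf B_{\mathbf i_0}$ is obtained in two steps. First read $(y_k^l(\eta T))$ as a Lusztig datum with respect to $\tau(\mathbf i_0)=(n,n-1,n,\dots)$. Then apply the piecewise-linear transition map to $\mathbf i_0$-data. Put differently, $\xi_{n+2-l}^{l-k}(T)$ is the Lusztig datum of $\iota_\lambda(T)$ with respect to $\tau(\mathbf i_0)$, with $a_{k,l}$ recording the root $\epsilon_{n+2-l}-\epsilon_{n+2-k}$. The paper's proof runs through the same composite and lists $\eta$ among the crystal morphisms in Figure~\ref{fig:commu}, so the same correction is needed there.

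Separately, your sanity check targets the wrong element. $\phi^{-1}(T^\lambda)$ is the \emph{lowest} weight element of $\mathcal T'(\lambda)$, and there $\xi_i^j=\lambda_j-\lambda_{j+1}$. The highest weight element is $\phi^{-1}(T_\lambda)$, and that is where all $\xi_i^j$ vanish.
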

\begin{proof}
According to the description of \( \mathbf B^{\mathbf i} \) given in \cite[Section 3.2]{Saito11}, every element in \(\mathbf B \) can be uniquely  determined by  a sequence of non-negative integers  \( (a_{k,l})_{1\leq k<l\leq n+1} \). 

\vskip 1mm

By comparing the crystal structure on $\mathbf B$  with the realization of 
\( \mathcal{B}(\infty) \) via marginally large tableaux, we obtain a bijection between \( \mathbf B \) and $\mathcal T(\infty)$ such that
\( a_{k,l}= y_k^l(T') \) for $T'\in \mathcal T(\infty)$.

\vskip 1mm

We consider the following composition of maps:
 \begin{equation*}
 \mathcal T'(\lambda)\stackrel{\mathtt{ml}}{\hookrightarrow} \mathcal T'(\infty)\stackrel{\eta}\to\mathcal T(\infty)\to\mathbf B.
 \end{equation*}
Then, for any $i\in I$, $1\leq j\leq n+1-i$ and $T\in\mathcal T'(\lambda)$, we have
\begin{equation*}
\xi_j^{n+2-i-j}(T)=z_{i}^j(T^{\mathtt{ml}})=y_i^{n+2-j}(\eta T^{\mathtt{ml}})=a_{i,n+2-j}.
\end{equation*}
By setting \( k=i \) and \( l = n+2-j \), we obtain the desired conclusion.
\end{proof}

We summarize all correspondences and normalizations in the following \emph{commuting} diagram:
\begin{figure}[H]
	\[
	\begin{tikzcd}[column sep=large, row sep=large]
		\mathcal T'(\lambda)
		\arrow[r, "\mathtt{ml}"]
		\arrow[d, "\psi_\lambda^{-1}"']
		& \mathcal T'(\infty)
		\arrow[r, "\eta"]
		\arrow[d, "\psi_\infty"']
		& \mathcal T(\infty)
		\arrow[d, "\chi", "\cong"']
		\\
		\Sigma_\iota[\lambda]
		\arrow[r, hookrightarrow]
		& \Sigma_\iota
		\arrow[r, "\mathrm{PL}"]
		& \mathbf B \;\cong\; \mathbb Z_{\ge 0}^{\,N}
	\end{tikzcd}
	\]
	\vspace{-3mm}
	\caption{Commuting diagram}\label{fig:commu}
\end{figure}
Here, $\chi:\mathcal T(\infty)\xrightarrow{\ \cong\ }\mathbf B$ identifies an MLT $T$ with PBW/Lusztig data by setting
$a_{k,\ell}:=y_k^{\,\ell}$ (for a fixed reduced word $\mathbf i_0$), and
$\mathrm{PL}:\Sigma_\iota\to\mathbf B$ is defined by the composition $\mathrm{PL}:=\chi\circ\eta\circ\psi_\infty^{-1}$.

All arrows in Figure~\ref{fig:commu} are strict crystal morphisms; the three vertical arrows are crystal isomorphisms, and the lower–left horizontal arrow is the natural inclusion of the highest–weight subcrystal.

\appendix 
\addtocontents{toc}{\protect\setcounter{tocdepth}{1}}
\section{The polyhedral realizations of crystal bases}\label{sec:poly_general}

We consider the $\mathbb Z$-lattice:
\begin{equation*}
	\mathbb Z^{\infty}:=\{(\cdots,x_k,\cdots,x_2,x_1)\mid x_k\in\mathbb Z\ \text{and}\ x_k=0 \ \text{for}\ k\gg 0\}.
\end{equation*}

Let $\iota=(\cdots,i_k,\cdots,i_2,i_1)$ be an infinite sequence such that $i_k\in I$ and 
\begin{equation*}
	i_k\neq i_{k+1},\quad \#\{k\mid i_k=i\}=\infty \ \text{for any}\ i\in I.
\end{equation*}

Given a fixed $\iota$, a crystal structure can be defined on $\mathbb Z^\infty$.
Let $\mathbb Z^\infty_\iota$ denote the corresponding crystal, which is described as follows:

\vskip 1mm

For any $\overrightarrow{x}=(\cdots,x_k,\cdots,x_2,x_1)\in \mathbb Z^\infty$, we define the following linear functions:
\begin{equation}\label{eq:funsigma}
	\begin{aligned}
		\sigma_k(\overrightarrow{x})&:=x_k+\sum_{j>k}\langle\alpha_{i_k}^\vee,\alpha_{i_j}\rangle x_j\quad \text{for}\ k\geq 1.
	\end{aligned}
\end{equation}
The condition $x_k=0$ for $k\gg 0$ ensures that the function $\sigma_k$ is well defined. We set
\begin{equation}\label{eq:sigmai and Mi}
	\begin{aligned}
		\sigma^{(i)}(\overrightarrow{x})&:=\max_{k:i_k=i}\sigma_k(\overrightarrow{x}),\\ M^{(i)}(\overrightarrow{x})&:=\{k\mid i_k=i,\ \sigma_k(\overrightarrow{x})=\sigma^{(i)}(\overrightarrow{x})\}.
	\end{aligned}
\end{equation}

We define $\wt : \mathbb Z_\iota^\infty \rightarrow P$, $\tilde{e}_i,
\tilde{f}_i: \mathbb Z_\iota^\infty \rightarrow \mathbb Z_\iota^\infty \cup \{ 0 \}$ and $\varepsilon_i,
\varphi_i : \mathbb Z_\iota^\infty \rightarrow \mathbb{Z} \cup \{-\infty\}$  $(i \in I)
$ as follows:
\begin{equation}\label{eq:crystal operator on Zinfty}
	\begin{aligned}
		\wt(\overrightarrow{x})&:=-\sum_{j=1}^\infty x_j\alpha_{i_j},
		\\		
		\tilde{f}_i\overrightarrow{x}&:=
		\overrightarrow{x}+\delta_{k,\min M^{(i)}(\overrightarrow{x})}\overrightarrow{e_k},
		\\
		\tilde{e}_i\overrightarrow{x}&:=
		\begin{cases}
			\overrightarrow{x}-\delta_{k,\max M^{(i)}(\overrightarrow{x})}\overrightarrow{e_k}&\text{if} \ \sigma^{(i)}(\overrightarrow{x})>0,\\
			\mathbf 0&\text{otherwise,}
		\end{cases}
		\\
		\varepsilon_i(\overrightarrow{x})&:=\sigma^{(i)}(\overrightarrow{x}),
		\\
		\varphi_i(\overrightarrow{x})&:=\langle\alpha_i^\vee,\wt(\overrightarrow{x})\rangle+\varepsilon_i(\overrightarrow{x}).
	\end{aligned}
\end{equation}

Note that the symbol $\mathbf 0$ in \eqref{eq:crystal operator on Zinfty} does not represent the zero vector $\overrightarrow{0}=(\cdots,0,0)$, but rather indicates a vector that is not within the connected component of the crystal graph.

\vskip 1mm

Let $\mathbb Z_{\geq 0}^\infty$ be the subset of $\mathbb Z^\infty$ consisting of tuples of non-negative integers.

\begin{Prop}\cite[Theorem 2.5]{NZ97}
	There exists a unique strict embedding of crystals
	\begin{equation*}
		\begin{aligned}
			\Psi_\iota:\mathcal B(\infty)&\hookrightarrow \mathbb Z_{\geq 0}^\infty\subset \mathbb Z_\iota^\infty,	\\
			u_\infty&\mapsto (\cdots,0,\cdots,0,0).
		\end{aligned}
	\end{equation*}
\end{Prop}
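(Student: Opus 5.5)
This is the Nakashima--Zelevinsky embedding theorem, and I would follow their construction, which proceeds via Kashiwara's embedding $\mathcal B(\infty)\hookrightarrow \mathcal B(\infty)\otimes B_i$.

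\textbf{Step 1 (one-step embedding).} For each $i\in I$ recall Kashiwara's strict crystal embedding
\[
\Psi_i:\mathcal B(\infty)\hookrightarrow \mathcal B(\infty)\otimes B_i,\qquad u_\infty\mapsto u_\infty\otimes b_i(0).
\]
Here $B_i=\{b_i(m)\mid m\in\mathbb Z\}$ is the elementary crystal. Concretely, $\Psi_i(b)=\tilde e_i^{*\,m}b\otimes b_i(-m)$ with $m=\varepsilon_i^*(b)$, where $*$ is the Kashiwara involution. I will take this as known from Kashiwara's work, since it is the standard input.

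\textbf{Step 2 (iteration).} Iterating along $\iota=(\dots,i_2,i_1)$ gives
\[
\Psi_{i_k}\circ\cdots\circ\Psi_{i_1}:\mathcal B(\infty)\hookrightarrow \mathcal B(\infty)\otimes B_{i_k}\otimes\cdots\otimes B_{i_1}.
\]
Each $b$ has finite weight depth. The hypothesis $\#\{k\mid i_k=i\}=\infty$ for every $i$ therefore guarantees that after finitely many steps the $\mathcal B(\infty)$-component becomes $u_\infty$. This is because $\varepsilon^*_{i}$ must vanish along the way: each step strictly lowers the depth whenever the exponent $m$ is nonzero, and a nonzero element has some $\varepsilon_i^*>0$ that will be hit later. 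Identify $u_\infty\otimes\cdots\otimes b_{i_k}(-x_k)\otimes\cdots\otimes b_{i_1}(-x_1)$ with $(\dots,x_k,\dots,x_1)\in\mathbb Z^\infty_{\ge0}$. The tail being eventually zero shows that the resulting map $\Psi_\iota$ lands in $\mathbb Z^\infty_{\ge0}$ and is well defined, since adding more factors $b(0)$ does not change the vector.

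\textbf{Step 3 (matching the crystal structure).} I would verify that the tensor product rule on $\cdots\otimes B_{i_2}\otimes B_{i_1}$, with the leftmost factor $u_\infty$ contributing $\varepsilon_i=0$, reproduces exactly the formulas \eqref{eq:crystal operator on Zinfty}. Unwinding the signature rule for the multiple tensor product, the quantity $\varphi$ or $\varepsilon$ accumulated up to position $k$ is precisely the linear function $\sigma_k$ in \eqref{eq:funsigma}. The operator $\tilde f_i$ then acts on the factor where this maximum is first attained, and $\tilde e_i$ on the last. Weight compatibility is immediate, since $\wt\,b_{i_j}(-x_j)=-x_j\alpha_{i_j}$. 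Strictness and injectivity are inherited because each $\Psi_i$ is strict and injective.

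\textbf{Step 4 (uniqueness).} A strict embedding sending $u_\infty$ to $\vec 0$ is determined on $\tilde f_{i_r}\cdots\tilde f_{i_1}u_\infty$ by commuting with $\tilde f_i$. Since $\mathcal B(\infty)$ is generated from $u_\infty$ by the $\tilde f_i$, the embedding is unique.

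\textbf{Main obstacle.} The hard part is the termination and stabilization in Step 2, together with the careful bookkeeping of the tensor rule in Step 3. I would handle the latter by induction on the number of tensor factors, using the associativity formula for $\varepsilon_i$ on multiple tensor products.
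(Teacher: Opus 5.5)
Your proposal is correct and is essentially the Nakashima--Zelevinsky argument: iterate Kashiwara's embedding $\Psi_i:\mathcal B(\infty)\hookrightarrow\mathcal B(\infty)\otimes B_i$ along $\iota$, use the $\varepsilon_i^*$/depth argument to show the $\mathcal B(\infty)$-component stabilizes at $u_\infty$, read off the $\sigma_k$ from the tensor product rule, and get uniqueness from generation by the $\tilde f_i$. The paper does not reprove this statement; it only cites \cite[Theorem 2.5]{NZ97}, whose proof follows this route. One small correction: in Step 2, ``a nonzero element'' should read ``an element other than $u_\infty$'', since that is what guarantees some $\varepsilon_i^*>0$.
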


Recall the definition of the subset $\Sigma_\iota$ of $\mathbb Z_{\geq 0}^\infty$ in \cite[(3.6)]{NZ97}:
\begin{equation*}
	\Sigma_\iota:=\{\overrightarrow{x}\in\mathbb Z_{\geq 0}^\infty\mid \varphi(\overrightarrow{x})\geq 0\ \text{for any}\ \varphi\in \Xi_\iota\},
\end{equation*}
where the definition of  set $\Xi_\iota$ is given in \cite[(3.4)]{NZ97}.

\begin{Thm}\cite[Theorem 3.1]{NZ97}
	The image $\mathrm{Im}(\Psi_\iota)$ is equal to the set $\Sigma_\iota$.
\end{Thm}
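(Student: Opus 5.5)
The statement is the last displayed theorem of the paper before the appendix, NZ97 Theorem 3.1: the image $\mathrm{Im}(\Psi_\iota)$ of the strict embedding $\Psi_\iota:\mathcal B(\infty)\hookrightarrow\mathbb Z^\infty_{\ge0}$ equals the polyhedral cone $\Sigma_\iota$ cut out by the functions in $\Xi_\iota$. The plan is to prove the two inclusions separately, after first recording two facts about $\Xi_\iota$ as defined in \cite[(3.4)]{NZ97}. That set is the smallest set of linear forms that contains the coordinate functions $x_j$ and is closed under the piecewise-linear operations $S_k$. Here $S_k$ replaces a form $\varphi=\sum c_jx_j$, when $c_k>0$, by $\varphi-c_k\beta_k$, where
\[
\beta_k=\sigma_k-\sigma_{k^{(+)}}, \qquad k^{(+)}=\min\{l>k\mid i_l=i_k\}.
\]
The first fact is that each $\beta_k$ is nonnegative on the component of $\vec 0$. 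The second is that $\varphi(\tilde f_i\vec x)$ and $\varphi(\tilde e_i\vec x)$ can be computed coordinatewise: $\tilde f_i$ raises exactly one coordinate, namely $x_{\min M^{(i)}}$.

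For $\mathrm{Im}(\Psi_\iota)\subseteq\Sigma_\iota$ I will argue by induction along $\tilde f_i$-paths from $\vec 0=\Psi_\iota(u_\infty)$. Since $\varphi(\vec 0)=0$ for every $\varphi\in\Xi_\iota$, the base case holds. For the inductive step, suppose $\vec x\in\Sigma_\iota$ and $\tilde f_i$ raises $x_k$ with $k=\min M^{(i)}(\vec x)$. If $\varphi$ has coefficient $c_k\ge0$, then $\varphi(\tilde f_i\vec x)\ge\varphi(\vec x)\ge0$ and nothing needs checking. If $c_k<0$, I will use that $\varphi$ arises from some $S_{k'}$ step and compare it with $\varphi+c_k\beta_{k^{(-)}}$, a form that again lies in $\Xi_\iota$ by the closure property. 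The minimality of $k$ in $M^{(i)}$ gives $\sigma_{k^{(-)}}(\vec x)<\sigma_k(\vec x)$, that is, $\beta_{k^{(-)}}(\vec x)\le-1$. This strict inequality is exactly enough to absorb the loss of $|c_k|$. This is the standard Nakashima–Zelevinsky argument.

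For $\Sigma_\iota\subseteq\mathrm{Im}(\Psi_\iota)$ I will show two things. First, $\Sigma_\iota$ is stable under every $\tilde e_i$ that does not give $\mathbf 0$; this is the mirror of the previous step, now using maximality in $M^{(i)}$. Second, every $\vec x\ne\vec 0$ in $\Sigma_\iota$ has $\varepsilon_i(\vec x)>0$ for some $i$. For the second claim, take the largest $k$ with $x_k>0$ and put $i=i_k$. Then $\sigma_k(\vec x)=x_k>0$, because all terms with $j>k$ vanish, so $\sigma^{(i)}(\vec x)>0$. Now apply $\tilde e$'s repeatedly: each application lowers $\sum_k x_k$, and the result stays in $\Sigma_\iota\subseteq\mathbb Z^\infty_{\ge0}$, because the coordinate functions lie in $\Xi_\iota$. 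So the process terminates at $\vec 0$. Reversing the path expresses $\vec x$ as $\tilde f_{j_m}\cdots\tilde f_{j_1}\vec 0$, and this lies in the image because $\Psi_\iota$ is a strict embedding.

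I expect the main obstacle to be the coefficient bookkeeping in the negative-coefficient case of the stability step. One has to check that the correcting form $\varphi+c_k\beta_{k^{(-)}}$ really lies in $\Xi_\iota$ and that the positivity assumption used by NZ97 (their condition $(\ast)$ on $\iota$) is what guarantees it. For the type $A_n$ sequence \eqref{eq:periodic sequence} this can alternatively be bypassed. Theorem \ref{thm:psiinfty} already identifies $\Sigma_\iota$ with $\mathcal T'(\infty)\cong\mathcal B(\infty)$ via the explicit inequalities \eqref{eq:poly_Binfty}. For this $\iota$ I would therefore fall back on that identification if the general bookkeeping proves too heavy.
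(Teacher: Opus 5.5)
The paper gives no proof of this statement; it only quotes \cite[Theorem 3.1]{NZ97}. Your architecture is the Nakashima--Zelevinsky one: $\vec 0\in\Sigma_\iota$ and $\tilde f_i$-stability give $\mathrm{Im}\Psi_\iota\subseteq\Sigma_\iota$, while $\tilde e_i$-stability plus the last-nonzero-coordinate argument give the reverse inclusion. The $\tilde e_i$ half is correct for every $\iota$, because $k^{(+)}$ always exists and $\beta_k(\vec x)\ge 1$ at $k=\max M^{(i)}(\vec x)$.

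The genuine gap is in the $\tilde f_i$ half, when $k=\min M^{(i)}(\vec x)$ is the \emph{first} occurrence of $i$ in $\iota$. In that case $k^{(-)}=0$, $\beta_0=0$ and $S_k\varphi=\varphi$, so nothing bounds $\varphi(\vec x)+c_k$ from below when $c_k<0$. This case cannot be avoided: at $\vec x=\vec 0$ every $\sigma_l$ vanishes, so $\min M^{(i)}(\vec 0)$ is always the first occurrence. Nor is it a bookkeeping issue. If some $\varphi\in\Xi_\iota$ had $c_k<0$ at the first occurrence $k$ of $i$, then $\tilde f_i\vec 0=\vec e_k\in\mathrm{Im}\Psi_\iota$ while $\varphi(\vec e_k)=c_k<0$, and the equality would fail.

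The missing ingredient is exactly the \emph{positivity condition} of NZ97: every $\varphi\in\Xi_\iota$ has a nonnegative coefficient at each $k$ with $k^{(-)}=0$. It is a hypothesis of their Theorem 3.1 and has been dropped in the statement as transcribed here. You also misplace its role. That the correcting form lies in $\Xi_\iota$ is automatic from closure under $S_k$; positivity is what rules out the first-occurrence case. For the sequence \eqref{eq:periodic sequence} it has to be verified, for instance by computing $\Xi_\iota$ explicitly.

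There are also three secondary problems.
\begin{itemize}
\item For $k^{(-)}\ge 1$ the correcting form is $S_k\varphi=\varphi-c_k\beta_{k^{(-)}}$. This kills the $x_k$-coefficient, since $\beta_{k^{(-)}}$ has coefficient $1$ at $x_k$, and then $\varphi(\vec x)+c_k=S_k\varphi(\vec x)+c_k\bigl(1+\beta_{k^{(-)}}(\vec x)\bigr)\ge 0$. With your sign, $\varphi+c_k\beta_{k^{(-)}}$, the estimate runs the wrong way and the form need not lie in $\Xi_\iota$.
\item Your ``first fact'' is false. For $n=2$, $\beta_1=x_1-x_2+x_3$ takes the value $-1$ at $\tilde f_2\vec 0=\vec e_2$. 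It also contradicts the inequality $\beta_{k^{(-)}}(\vec x)\le -1$ that your own inductive step uses, so drop it.
\item The fallback through Theorem~\ref{thm:psiinfty} covers only one $\iota$. At best it identifies $\mathrm{Im}\Psi_\iota$ with the set cut out by \eqref{eq:poly_Binfty}, whereas the statement concerns the set cut out by $\Xi_\iota$. Showing the former lies in the latter is again the $\tilde f_i$-inclusion, so it still requires knowing $\Xi_\iota$, or positivity, for that $\iota$. Moreover, the paper takes \eqref{eq:poly_Binfty} from \cite[Theorem 5.1]{NZ97}, which in NZ97 is derived from Theorem 3.1. You would therefore have to use Theorem~\ref{thm:psiinfty} as a self-contained verification, not through that citation.
\end{itemize}
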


\vskip 2mm

Let $\lambda\in P^+$. Recalling the definition of the crystal $R_\lambda$ from Example \ref{ex:Rlambda}, we consider the crystal $\mathbb Z_\iota^\infty[\lambda]:=\mathbb Z_\iota^\infty\otimes R_\lambda$. Since $R_\lambda$ consists of a single element, we can identify $\mathbb Z_\iota^\infty[\lambda]$ with $\mathbb Z^\infty$ as a set. We define the following linear functions:
\begin{equation}\label{eq:sigma0i}
	\begin{aligned}
		\sigma_0^{(i)}(\overrightarrow{x})&:=-\langle\alpha_i^\vee,\lambda\rangle+\sum_{j\geq 1}\langle\alpha_{i}^\vee,\alpha_{i_j}\rangle x_j\quad \text{for}\ i\in I.
	\end{aligned}
\end{equation}

Based on the definitions of $\sigma^{(i)}(\overrightarrow{x})$ and $M^{(i)}(\overrightarrow{x})$ in \eqref{eq:sigmai and Mi}, we define $\wt : \mathbb Z_\iota^\infty[\lambda] \rightarrow P$, $\tilde{e}_i,
\tilde{f}_i: \mathbb Z_\iota^\infty[\lambda] \rightarrow \mathbb Z_\iota^\infty[\lambda] \cup \{ 0 \}$ and $\varepsilon_i,
\varphi_i : \mathbb Z_\iota^\infty[\lambda] \rightarrow \mathbb{Z} \cup \{-\infty\}$  $(i \in I)$ as follows:
\begin{equation}\label{eq:crystal operator}
	\begin{aligned}
		\wt(\overrightarrow{x})&:=\lambda-\sum_{j=1}^\infty x_j\alpha_{i_j},
		\\		
		\tilde{f}_i\overrightarrow{x}&:=
		\begin{cases}
			\overrightarrow{x}+\delta_{k,\min M^{(i)}}\overrightarrow{e_k}&\text{if} \ \sigma^{(i)}(\overrightarrow{x})>\sigma_0^{(i)}(\overrightarrow{x}),\\
			\mathbf 0&\text{otherwise,}
		\end{cases}
		\\
		\tilde{e}_i\overrightarrow{x}&:=
		\begin{cases}
			\overrightarrow{x}-\delta_{k,\max M^{(i)}}\overrightarrow{e_k}&\text{if} \ \sigma^{(i)}(\overrightarrow{x})\geq\sigma_0^{(i)}(\overrightarrow{x})\ \text{and}\ \sigma^{(i)}(\overrightarrow{x})>0,\\
			\mathbf 0&\text{otherwise,}
		\end{cases}
		\\
		\varepsilon_i(\overrightarrow{x})&:=\max (\sigma^{(i)}(\overrightarrow{x}),\sigma_0^{(i)}(\overrightarrow{x})),
		\\
		\varphi_i(\overrightarrow{x})&:=\langle\alpha_i^\vee,\wt(\overrightarrow{x})\rangle+\varepsilon_i(\overrightarrow{x}).
	\end{aligned}
\end{equation}

\vskip 1mm

Recall the definition of the subset $\Sigma_\iota[\lambda]$ of $\mathbb Z_\iota^\infty[\lambda]$ in \cite[(4.14)]{Na99}:
\begin{equation*}
	\Sigma_\iota[\lambda]:=\{\overrightarrow{x}\in\mathbb Z_\iota^\infty[\lambda]\mid \varphi(\overrightarrow{x})\geq 0\ \text{for any}\ \varphi\in \Xi_\iota[\lambda]\},
\end{equation*}
where the definition of the  set $\Xi_\iota[\lambda]$ is given in \cite[(4.13)]{Na99}.
Then  the following theorem holds:
\begin{Thm}\cite[Theorem 3.2, Theorem 4.1]{Na99}
	\begin{enumerate}
		\item The map
		\begin{equation*}
			\Psi_\iota^{(\lambda)}:\mathcal B(\lambda)\stackrel{\Omega_\lambda}{\hookrightarrow}\mathcal B(\infty)\otimes R_\lambda\stackrel{\Psi_\iota\otimes \mathrm{id}}{\hookrightarrow} \mathbb Z_{\iota}^\infty\otimes R_\lambda=\mathbb Z_\iota^\infty[\lambda]
		\end{equation*}
		is the unique strict embedding of crystals such that $\Psi_\iota^{(\lambda)}(v_\lambda)=\overrightarrow{ 0}\otimes r_\lambda$, where $v_\lambda$ is the highest
		weight vector in $\mathcal B(\lambda)$.
		\item The set $\Sigma_\iota[\lambda]$ forms a subcrystal of 
		$\mathbb Z^\infty_\iota[\lambda]$ and coincides with the highest weight crystal $\mathcal B(\lambda)$.
	\end{enumerate}
	
\end{Thm}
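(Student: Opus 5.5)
The plan is to build the embedding in two strict steps, and then identify its image by a closure argument.

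\emph{Part (1).} Kashiwara's embedding $\Omega_\lambda:\mathcal B(\lambda)\hookrightarrow\mathcal B(\infty)\otimes R_\lambda$, $v_\lambda\mapsto u_\infty\otimes r_\lambda$, is a strict crystal embedding. Its image is
\[
\{\,b\otimes r_\lambda \mid \varepsilon_i^*(b)\le\langle\alpha_i^\vee,\lambda\rangle \ \text{for all } i\,\},
\]
where $\varepsilon_i^*=\varepsilon_i\circ *$ and $*$ is Kashiwara's involution. By \cite[Theorem 2.5]{NZ97}, $\Psi_\iota$ is a strict embedding. Tensoring a strict embedding with the identity on $R_\lambda$ stays strict: the tensor product rule only involves $\varepsilon_i,\varphi_i$, and these are preserved. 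A composition of strict embeddings is strict, so $\Psi_\iota^{(\lambda)}$ is a strict embedding with $v_\lambda\mapsto\overrightarrow 0\otimes r_\lambda$.

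For uniqueness, suppose $\Psi'$ is another strict embedding with the same value on $v_\lambda$. Every $b\in\mathcal B(\lambda)$ has the form $\tilde f_{i_k}\cdots\tilde f_{i_1}v_\lambda$. Strictness gives $\Psi'(b)=\tilde f_{i_k}\cdots\tilde f_{i_1}(\overrightarrow 0\otimes r_\lambda)=\Psi_\iota^{(\lambda)}(b)$.

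\emph{Part (2).} First I will note that on $\mathbb Z^\infty_\iota\otimes R_\lambda$ the tensor product rule reproduces exactly the formulas \eqref{eq:crystal operator}. The term $\sigma^{(i)}_0$ is $\varepsilon_i(r_\lambda)-\langle\alpha_i^\vee,\wt(\overrightarrow x)\rangle$, which is the second entry of the $\max$ in the tensor rule. So $\Sigma_\iota[\lambda]$ is compared with the image of $\Psi_\iota^{(\lambda)}$ inside one fixed crystal.

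Next I will show the two inclusions.
\begin{itemize}
\item[$\subseteq$ (image $\subseteq\Sigma_\iota[\lambda]$).] I will use that $\overrightarrow 0\in\Sigma_\iota[\lambda]$. Then I show that $\Sigma_\iota[\lambda]\cup\{\mathbf 0\}$ is stable under $\tilde f_i$, so the connected image, generated from $\overrightarrow 0$, lies inside it.
\item[$\supseteq$.] I will show stability under $\tilde e_i$. I will also show that the only element of $\Sigma_\iota[\lambda]$ killed by all $\tilde e_i$ is $\overrightarrow 0$. Hence every element of $\Sigma_\iota[\lambda]$ reaches $\overrightarrow 0$ by applying $\tilde e_i$'s, and so lies in the image.
\end{itemize}

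The stability statements follow Nakashima's method. For a linear form $\varphi\in\Xi_\iota[\lambda]$, I will compute $\varphi(\tilde f_i\overrightarrow x)=\varphi(\overrightarrow x)+\varphi_k$, where $k=\min M^{(i)}(\overrightarrow x)$ is the coordinate raised by one and $\varphi_k$ denotes the $k$-th coefficient of $\varphi$. If $\varphi_k<0$, the piecewise-linear operation $S_k$ defining $\Xi_\iota[\lambda]$ produces another form $S_k\varphi\in\Xi_\iota[\lambda]$. The extremality of $k$ in $M^{(i)}$ then yields $\varphi(\tilde f_i\overrightarrow x)\ge S_k\varphi(\overrightarrow x)\ge0$. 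The argument for $\tilde e_i$ with $\max M^{(i)}$ is symmetric.

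Alternatively, and this is the route I would try first, one can match the image description via $\varepsilon^*_i$. Under $\Psi_\iota$ the function $\varepsilon_i^*$ becomes a maximum of linear forms $\lambda^{(i)}_k$, namely the $\lambda$-generators of $\Xi_\iota[\lambda]$. Then $\varepsilon_i^*(b)\le\langle\alpha_i^\vee,\lambda\rangle$ is exactly the system of inequalities defining $\Sigma_\iota[\lambda]$ on top of $\Sigma_\iota$.

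The main obstacle is the closure step: that each negative coefficient $\varphi_k<0$ is compensated by a form of $\Xi_\iota[\lambda]$ already nonnegative at $\overrightarrow x$. This requires the positivity-type hypothesis on $\iota$ that Nakashima imposes. For the type $A_n$ sequence \eqref{eq:periodic sequence} I would verify it directly. There $\Xi_\iota[\lambda]$ reduces to the explicit inequalities of Theorem~\ref{thm:polyhedral A}. The check is a short computation with $\sigma_k$ given by Lemma~\ref{lem:sigmaixvalue}, or equivalently via the Gelfand--Tsetlin interlacing used in Theorem~\ref{thm:crystalGT}.
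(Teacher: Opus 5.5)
The paper gives no proof of this statement; it simply quotes \cite{Na99}. Your main line is Nakashima's own argument and is sound: for (1), composing strict embeddings and getting uniqueness from connectedness; for (2), matching the tensor rule with \eqref{eq:crystal operator}, proving stability of $\Sigma_\iota[\lambda]$ under $\tilde e_i,\tilde f_i$ via the operators $\hat S_k$ and the extremality of $\min M^{(i)}$ and $\max M^{(i)}$, and showing the highest weight element is unique.

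\textbf{Where the hypothesis enters.} You have put the hypothesis at the wrong step. In the $\lambda$-setting, stability needs no assumption, because $\Xi_\iota[\lambda]$ is by construction closed under every $\hat S_k$. The only delicate case is when $k=\min M^{(i)}(\vec x)$ equals the first index $k_i$ with $i_{k_i}=i$. There the comparison form is $\varphi+\varphi_k\lambda^{(i)}$, where
\[
\lambda^{(i)}:=\sigma_{k_i}-\sigma_0^{(i)}=\langle\alpha_i^\vee,\lambda\rangle-\sum_{j<k_i}\langle\alpha_i^\vee,\alpha_{i_j}\rangle x_j-x_{k_i}
\]
is Nakashima's generator of $\Xi_\iota[\lambda]$. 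The required bound $\lambda^{(i)}(\vec x)\ge 1$ is exactly the condition $\sigma^{(i)}(\vec x)>\sigma_0^{(i)}(\vec x)$ under which $\tilde f_i\vec x\neq\mathbf 0$. Your instinct is right for $\mathcal B(\infty)$ in \cite{NZ97}: there no such generator exists, and a positivity condition is what handles first occurrences.

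The highest-weight step is also unconditional. For $\vec x\neq\vec 0$, let $m$ be the largest index with $x_m\neq0$ and put $i=i_m$. Then $\sigma^{(i)}(\vec x)\ge\sigma_m(\vec x)=x_m>0$. Also $\sigma^{(i)}(\vec x)\ge\sigma_{k_i}(\vec x)=\sigma_0^{(i)}(\vec x)+\lambda^{(i)}(\vec x)\ge\sigma_0^{(i)}(\vec x)$. Hence $\tilde e_i\vec x\neq\mathbf 0$.

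What genuinely needs a hypothesis is the step you simply assume, namely $\vec 0\in\Sigma_\iota[\lambda]$. This is Nakashima's ampleness condition, which the statement as quoted suppresses. By the two facts above, $\Sigma_\iota[\lambda]$ is empty when ampleness fails, since an element with minimal $\sum_k x_k$ would have to be $\vec 0$.

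For the periodic $\iota$ of \eqref{eq:periodic sequence}, you must therefore check that every form in the $\hat S_k$-closure has nonnegative constant term. This is the combinatorial computation behind \cite[Theorem 6.1]{Na99}, and it is not "a short computation with $\sigma_k$". You cannot read it off from Theorem~\ref{thm:polyhedral A} or Theorem~\ref{thm:crystalGT}, since both are consequences of the statement you are proving.

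\textbf{The $\varepsilon_i^*$ route.} The route you say you would try first is not a proof as sketched. Apart from $\varepsilon_{i_1}^*=x_1$, nothing you wrote expresses $\varepsilon_i^*\circ\Psi_\iota^{-1}$ in the coordinates $\vec x$. Identifying it with a maximum of forms from $\Xi_\iota[\lambda]$ is essentially equivalent to part (2) itself, so that route presupposes what it is meant to prove. The stability argument is the one to write out.
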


\begin{Lem}\label{lem:sigmaixvalue}
	For any $\overrightarrow{x}\in\Sigma_\iota[\lambda]$, the value of $\sigma^{(i)}(\overrightarrow{x})$ is given by
	$$\max_{j\geq 1}\{x_j^{(i)}-x_j^{(i+1)}+2\sum_{k=j+1}^{n+1-i}x_k^{(i)}-\sum_{k=j+1}^{n+2-i}x_k^{(i-1)}-\sum_{k=j+1}^{n-i}x_k^{(i+1)}\}.$$
\end{Lem}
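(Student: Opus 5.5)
The plan is a direct computation from the definition \eqref{eq:funsigma} for the periodic sequence \eqref{eq:periodic sequence}. The only information about $\overrightarrow{x}\in\Sigma_\iota[\lambda]$ that I expect to need is the vanishing condition $x_j^{(i)}=0$ for $i+j>n+1$ from Theorem~\ref{thm:polyhedral A}. Throughout, I adopt the conventions $x_j^{(0)}:=0$ and $x_j^{(n+1)}:=0$, which take care of the boundary cases $i=1$ and $i=n$.

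\emph{Step 1 (indexing).} Since $\iota=(\cdots,n,\dots,2,1,n,\dots,2,1)$ is read from the right with $i_1=1$, we have $i_k\equiv k \pmod n$. Hence the positions $k$ with $i_k=i$ are exactly $k=(j-1)n+i$ for $j\ge1$, and $x_k=x_j^{(i)}$ there. It follows that $\sigma^{(i)}(\overrightarrow{x})=\max_{j\ge1}\sigma_{(j-1)n+i}(\overrightarrow{x})$.

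\emph{Step 2 (splitting the sum).} For $k=(j-1)n+i$, I split the indices $l>k$ into two groups.
\begin{enumerate}
\item[(a)] The remaining positions of the same block, $l=(j-1)n+s$ with $i<s\le n$. Among these, only $s=i+1$ pairs nontrivially with $\alpha_i^\vee$, and $\langle\alpha_i^\vee,\alpha_{i+1}\rangle=-1$. This contributes $-x_j^{(i+1)}$.
\item[(b)] The full blocks $l'>j$. Each such block contains every color $1,\dots,n$ once. Using $\langle\alpha_i^\vee,\alpha_i\rangle=2$ and $\langle\alpha_i^\vee,\alpha_{i\pm1}\rangle=-1$, block $l'$ contributes $2x_{l'}^{(i)}-x_{l'}^{(i-1)}-x_{l'}^{(i+1)}$.
\end{enumerate}
Together these give
\[
\sigma_{(j-1)n+i}(\overrightarrow{x})=x_j^{(i)}-x_j^{(i+1)}+\sum_{l>j}\bigl(2x_l^{(i)}-x_l^{(i-1)}-x_l^{(i+1)}\bigr).
\]

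\emph{Step 3 (truncation).} I then apply the vanishing condition $x_l^{(m)}=0$ for $l+m>n+1$. This cuts the three infinite sums at $l=n+1-i$, $l=n+2-i$ and $l=n-i$, respectively, which yields exactly the displayed expression. Taking the maximum over $j\ge1$ completes the argument.

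For large $j$ every term vanishes and the value is $0$. This is consistent with the statement, since the maximum in the statement is also taken over all $j\ge1$, so no separate treatment of these $j$ is needed.

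There is no real obstacle here. The only delicate points are to get the direction of ``$l>k$'' right relative to the right-to-left reading of $\iota$ (only the colors $s>i$ of the same block count, not $s<i$), and to check the endpoint conventions at $i=1$ and $i=n$.
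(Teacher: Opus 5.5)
Your proposal is correct and is essentially the paper's own argument: the paper likewise writes $\sigma^{(i)}(\overrightarrow{x})=\max_{j\ge1}\sigma_{(j-1)n+i}(\overrightarrow{x})$, isolates the single same-block term $-x_j^{(i+1)}$ plus the full-block contributions $2x_k^{(i)}-x_k^{(i-1)}-x_k^{(i+1)}$ for $k\ge j+1$, and then truncates using $x_j^{(i)}=0$ for $i+j>n+1$. The paper leaves the conventions $x_j^{(0)}=x_j^{(n+1)}=0$ implicit, and they differ from the literal reading $x_j^{(i)}=x_{(j-1)n+i}$ at $i=0$ and $i=n+1$; your stating them explicitly is exactly what makes the endpoint cases $i=1$ and $i=n$ come out right.
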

\begin{proof}
	By the definition of $\sigma^{(i)}(\overrightarrow{x})$ in \eqref{eq:sigmai and Mi} and the periodic sequence $\iota$ in \eqref{eq:periodic sequence}, it follows that
	\begin{equation}\label{eq:epsilonix}
		\begin{aligned}
			\sigma^{(i)}(\overrightarrow{x})&=\max_{k:i_k=i}\sigma_k(\overrightarrow{x})=\max_{j\geq 1}\sigma_{(j-1)n+i}(\overrightarrow{x})\\
			&=\max_{j\geq 1}\{x_{(j-1)n+i}+\sum_{l>(j-1)n+i}\langle\alpha_i^\vee, \alpha_{i_l}\rangle x_l\}\\
			&=\max_{j\geq 1}\{x_j^{(i)}-x_j^{(i+1)}+\sum_{k\geq j+1}(2x_k^{(i)}-x_k^{(i-1)}-x_k^{(i+1)})\}\\
			&=\max_{j\geq 1}\{x_j^{(i)}-x_j^{(i+1)}+2\sum_{k=j+1}^{n+1-i}x_k^{(i)}-\sum_{k=j+1}^{n+2-i}x_k^{(i-1)}-\sum_{k=j+1}^{n-i}x_k^{(i+1)}\},
		\end{aligned}
	\end{equation}
	as desired.
\end{proof}

\begin{Lem}\label{lem:sigmaigeqsigma0}
	For any $\overrightarrow{x}\in\Sigma_\iota[\lambda]$, we have $\sigma^{(i)}(\overrightarrow{x})\geq\sigma_0^{(i)}(\overrightarrow{x})$.
\end{Lem}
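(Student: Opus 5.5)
The plan is to compare $\sigma_0^{(i)}(\overrightarrow{x})$ with a single term of the maximum defining $\sigma^{(i)}(\overrightarrow{x})$, namely the term with $j=1$, i.e.\ $\sigma_{i}(\overrightarrow{x})$. Since $\sigma^{(i)}(\overrightarrow{x})\ge\sigma_i(\overrightarrow{x})$ trivially, it will suffice to show $\sigma_i(\overrightarrow{x})-\sigma_0^{(i)}(\overrightarrow{x})\ge 0$. Throughout I use the convention $x_k^{(0)}=x_k^{(n+1)}=0$, together with $x_k^{(i)}=0$ for $i+k>n+1$.

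\emph{Step 1: expand both sides for the periodic sequence \eqref{eq:periodic sequence}.} By the computation in the proof of Lemma~\ref{lem:sigmaixvalue} (the line before truncating the sums), we have
\[
\sigma_i(\overrightarrow{x})=x_1^{(i)}-x_1^{(i+1)}+\sum_{k\ge 2}\bigl(2x_k^{(i)}-x_k^{(i-1)}-x_k^{(i+1)}\bigr).
\]
From \eqref{eq:sigma0i}, using $\langle\alpha_i^\vee,\lambda\rangle=\lambda_i-\lambda_{i+1}$ and $\langle\alpha_i^\vee,\alpha_{i'}\rangle=2\delta_{i,i'}-\delta_{|i-i'|,1}$, I get
\[
\sigma_0^{(i)}(\overrightarrow{x})=-(\lambda_i-\lambda_{i+1})+\sum_{k\ge1}\bigl(2x_k^{(i)}-x_k^{(i-1)}-x_k^{(i+1)}\bigr).
\]

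\emph{Step 2: subtract.} All terms with $k\ge2$ cancel. What remains is
\[
\sigma_i(\overrightarrow{x})-\sigma_0^{(i)}(\overrightarrow{x})=(\lambda_i-\lambda_{i+1})-\bigl(x_1^{(i)}-x_1^{(i-1)}\bigr).
\]

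\emph{Step 3: invoke the defining inequalities of $\Sigma_\iota[\lambda]$.} Take the third family in \eqref{eq:poly_Blambda} with $j=1$, which gives $\lambda_i-\lambda_{i+1}\ge x_1^{(i)}-x_1^{(i-1)}$. Hence the difference in Step 2 is nonnegative, and $\sigma^{(i)}(\overrightarrow{x})\ge\sigma_i(\overrightarrow{x})\ge\sigma_0^{(i)}(\overrightarrow{x})$.

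The only delicate point is bookkeeping, not ideas. I must check that the boundary cases $i=1$ (where $x^{(0)}\equiv0$) and $i=n$ (where $x^{(n+1)}\equiv0$) are covered by the same formulas. I must also confirm that the expansion of $\sigma_i$ correctly includes $-x_1^{(i+1)}$, which comes from the index $i+1>i$ within the first period of $\iota$, while excluding $x_1^{(i-1)}$, whose index lies to the right of $i$ in the sequence. Beyond this, the identity in Step 2 is immediate.
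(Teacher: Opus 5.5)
Your proof is correct and uses the same two ingredients as the paper. These are the comparison of $\sigma^{(i)}(\overrightarrow{x})$ with the first-period term $\sigma_i(\overrightarrow{x})$, and the $j=1$ defining inequality $\lambda_i-\lambda_{i+1}\ge x_1^{(i)}-x_1^{(i-1)}$ of $\Sigma_\iota[\lambda]$. The paper wraps this in a contradiction argument through a maximizing index $j$; your direct identity $\sigma_i(\overrightarrow{x})-\sigma_0^{(i)}(\overrightarrow{x})=(\lambda_i-\lambda_{i+1})-\bigl(x_1^{(i)}-x_1^{(i-1)}\bigr)$ yields the chain $\sigma_0^{(i)}(\overrightarrow{x})\le\sigma_i(\overrightarrow{x})\le\sigma^{(i)}(\overrightarrow{x})$ more cleanly.
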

\begin{proof}

	If $\sigma^{(i)}(\overrightarrow{x})<\sigma^{(i)}_0(\overrightarrow{x})$, then by Lemma \ref{lem:sigmaixvalue}, we can assume that 
	$$
	\sigma^{(i)}(\overrightarrow{x})=x_j^{(i)}-x_j^{(i+1)}+2\sum_{k=j+1}^{n+1-i}x_k^{(i)}-\sum_{k=j+1}^{n+2-i}x_k^{(i-1)}-\sum_{k=j+1}^{n-i}x_k^{(i+1)}.
	$$
	Therefore, we obtain
	\begin{equation*} 
		\begin{aligned}
			&x_j^{(i)}-x_j^{(i+1)}+2\sum_{k=j+1}^{n+1-i}x_k^{(i)}-\sum_{k=j+1}^{n+2-i}x_k^{(i-1)}-\sum_{k=j+1}^{n-i}x_k^{(i+1)} \geq\\
			&
			x_1^{(i)}-x_1^{(i+1)}+2\sum_{k=2}^{n+1-i}x_k^{(i)}-\sum_{k=2}^{n+2-i}x_k^{(i-1)}-\sum_{k=2}^{n-i}x_k^{(i+1)},
		\end{aligned}
	\end{equation*}
	which implies
	\begin{equation}\label{eq:sigmigeq}
		\sum_{k=1}^jx_k^{(i)}+\sum_{k=2}^{(j-1)}x_k^{(i)}-\sum_{k=2}^{j}x_k^{(i-1)}-\sum_{k=1}^{j-1}x_k^{(i+1)}\leq 0.
	\end{equation}

	From the definition of $\sigma_0^{(i)}(\overrightarrow{x})$ in \eqref{eq:sigma0i}, it follows that
	\begin{align*}
		&\sigma_0^{(i)}(\overrightarrow{x})-\sigma^{(i)}(\overrightarrow{x})=\lambda_{i+1}-\lambda_i+\sum_{k=1}^j(2x_k^{(i)}-x_k^{(i-1)}-x_k^{(i+1)})-x_j^{(i)}+x_j^{(i+1)}\\
		=&\lambda_{i+1}-\lambda_i+\sum_{k=1}^jx_k^{(i)}+\sum_{k=1}^{j-1}x_k^{(i)}-\sum_{k=1}^jx_k^{(i-1)}-\sum_{k=1}^{j-1}x_k^{(i+1)}>0.
	\end{align*}
	
	By the condition in \eqref{eq:poly_Blambda}, we have $\lambda_i-\lambda_{i+1}\geq x_1^{(i)}-x_1^{(i-1)}$, thereby implying that
	\begin{align*}
		\sum_{k=2}^jx_k^{(i)}+\sum_{k=1}^{j-1}x_k^{(i)}-\sum_{k=2}^jx_k^{(i-1)}-\sum_{k=1}^{j-1}x_k^{(i+1)}>0.
	\end{align*}
	
	This inequality contradicts \eqref{eq:sigmigeq}. Thus $\sigma^{(i)}(\overrightarrow{x})<\sigma^{(i)}_0(\overrightarrow{x})$ is impossible. 
\end{proof}

\end{document}